\numberwithin{equation}{section} %% Comment out for sequentially-numbered
\numberwithin{figure}{section} %% Comment out for sequentially-numbered
\theoremstyle{plain}
\newtheorem{thm}{Theorem}[section]
\theoremstyle{definition}
\newtheorem{defn}[thm]{Definition}
\theoremstyle{plain}
\newtheorem{cor}[thm]{Corollary}
\theoremstyle{definition}
\newtheorem{example}[thm]{Example}
\theoremstyle{remark}
\newtheorem{rem}[thm]{Remark}
\theoremstyle{plain}
\newtheorem{prop}[thm]{Proposition}
\theoremstyle{plain}
\newtheorem{lem}[thm]{Lemma}
\theoremstyle{plain}
\newtheorem*{thm*}{Theorem}
\newtheorem*{lem*}{Lemma}
\newtheorem*{prop*}{Proposition}
\newtheorem*{cor*}{Corollary}
\newtheorem*{rem*}{Remark}
\newtheorem*{defn*}{Definition}
\newcommand{\R}{\mathbb{R}}
\author{Antonio Rieser}
\title[Lagrangian blow-ups, blow-downs, and applications]{Lagrangian blow-ups, blow-downs, and applications to real packing}
\begin{document}
\selectlanguage{english}
\setstretch{1}
\setcounter{page}{1}
\begin{abstract}Given a symplectic manifold $(M,\omega)$ and a
  Lagrangian submanifold $L$, we construct versions of the symplectic
  blow-up and blow-down which are defined relative to
  $L$. We further show that if $M$ admits an anti-symplectic
  involution $\phi$, i.e. a diffeomorphism such that $\phi^2=Id$ and
  $\phi^*\omega=-\omega$, and we blow-up an appropriately symmetric
  embedding of symplectic balls, then there exists an antisymplectic
  involution on the blow-up $\tilde{M}$ as well. We then derive a
  homological condition for real Lagrangian surfaces
  $L=\text{Fix}(\phi)$ which determines when the topology of $L$
  changes after a blow down, and we use these constructions to
  study the relative packing numbers and packing stability for
  real symplectic four manifolds which are non-Seiberg-Witten simple.
\end{abstract}
\maketitle
\tableofcontents
\section{Introduction}
\label{ch:Introduction}
The blow-up and blow-down constructions are important techniques in
complex geometry, leading to methods for resolving singularities as
well as classification schemes based on birational equivalence.  In
the symplectic category, the notion of blowing up a point or
submanifold has also been defined and studied from various points of
view, as the in papers by Guillemin and Sternberg
\cite{Guillemin_Sternberg_1989}, Lerman \cite{Lerman_1995}, and McDuff
and Polterovich \cite{McDuff_Polterovich_1994}.  When combined with
the theory of $J$-holomorphic curves, the blow-up and blow-down have
yielded a great deal of information on symplectic manifolds, notably
in packing problems \cite{Biran_1997,McDuff_Polterovich_1994}, in the
classification of rational and ruled symplectic 4-manifolds
\cite{McDuff_1990,Lalonde_McDuff_1994,Lalonde_McDuff_1996}, and in the
study of the topology of the space of symplectic embeddings of balls,
as, for example, in \cite{Lalonde_Pinsonnault_2004, Pinsonnault_2008,
  Anjos_Lalonde_Pinsonnault_2009}.  In this note, we study relative
and real versions of the symplectic blow-up and blow-down, in order to
apply them to questions regarding the topology of Lagrangian
submanifolds.  The relative blow-up takes the pair $(M,L)$ and a set
of relative ball embeddings $\psi:\coprod_{j=1}^k
(B_j^{2n}(1+2\epsilon),\lambda_j^2\omega_0,B_{\mathbb{R},j}(1+2\epsilon))\to (M,\omega,L)$ and obtains
another pair $(\tilde{M},\tilde{L})$, and a symplectic form $\tilde{\omega}$, in which the balls have been
replaced by copies of the tautological disk bundle over
$\mathbb{C}P^{n-1}$, and $\tilde{L}$ is Lagrangian in $(\tilde{M},\tilde{\omega})$. The
blow-down is the reverse procedure. The real blow-up and blow-down are
similar constructions which also respect a so-called real structure on the
manifolds.

As a first application, we study the packing problem in real
symplectic manifolds.  The relative and mixed packing problems were first
introduced by Barraud and Cornea in \cite{Barraud_Cornea_2007}, and upper
bounds for the relative embedding of one ball on the Clifford torus in
$\mathbb{C}P^{n}$ was given by Biran and Cornea in \cite{Biran_Cornea_2009} using Pearl
Homology.  Buhovsky \cite{Buhovsky_2010} further showed that the upper bound
given for the Clifford torus is sharp.
Schlenk, in \cite{Schlenk_2005}, directly constructed relative
packings of $k\leq 6$ balls in $(\mathbb{C}P^2,\mathbb{R}P^2)$ through
a detailed analysis of the moment map. A related construction for
packing $\mathbb{C}P^2$ for $k=7,8$ balls was done by Wieck in
\cite{Wieck_thesis}. It is not immediately clear if Wieck's techniques
can be made adapted to the relative setting, since the symplectic
tunnelling technique that he introduces does not produce relative
embeddings. In
Section \ref{sec:Applications}, we construct relative embeddings using
$J$-holomorphic techniques, following the general line of argument in
\cite{McDuff_Polterovich_1994} and \cite{Biran_1997}. Our results
extend those of McDuff and Polterovich\cite{McDuff_Polterovich_1994}
and Biran\cite{Biran_1997} to the real setting. Our packing method depends on the presence of a
real structure $\phi$ for which $L=Fix(\phi)$, and because of this, we
do not recover the lower bounds on the Clifford Torus considered by
Buhovsky\cite{Buhovsky_2010}. 

The results in this paper form a part of my PhD thesis, carried out at the Universit\'e de Montr\'eal
under the supervision of Octav Cornea and Fran\c{c}ois Lalonde.

\subsection{Setting and Notation} \label{subsec:Setting and Notation}

We now give several definitions and set notation for all that follows.

\begin{defn} Let $(M^{2n},\omega)$ be a symplectic manifold. We say
  that a submanifold $L$ is \emph{Lagrangian} if $\mbox{dim } L=n$ and
  $\omega|_{TL}=0$.
\end{defn}

\begin{defn}\label{defn:Notation}
  \begin{enumerate}
  \item We let $\mathcal{L}^n$ denote the tautological complex line
    bundle over $\mathbb{C}P^{n-1}$, and let $\mathcal{R}^n$ be the
    real tautological line bundle over $\mathbb{R}P^{n-1}$, i.e.
    $\mathcal{L}^n = \{ (z,l) \in \mathbb{C}^{n} \times \mathbb{C}P^{n-1} | z \in
    l \}$ and
    $\mathcal{R}^n = \{ (x,l) \in \mathbb{R}^{n} \times \mathbb{R}P^{n-1} | x \in
    l \}$. We will suppress the dimension $n$ when it is clear from the
    context.
  \item $\pi:\mathcal{L}\to\mathbb{C}^{n}$ and
    $\theta:\mathcal{L}\to\mathbb{C}P^{n-1}$ denote the canonical
    projections. \label{def-item-pi blow down map}
  \item $\mathcal{L}(r)$ and $\mathcal{R}(r)$ denote the canonical open
    disk bundles over $\mathbb{C}P^{n-1}$ and $\mathbb{R}P^{n-1},$
    respectively, of radius r. Abusing notation, we will use $\mathcal{L}(0)$ 
    and $\mathcal{R}(0)$ to refer to the zero section of these bundles.

  \item For each $\kappa,\lambda>0$, we define a closed two-form
    $\rho(\kappa,\lambda)$ on $\mathcal{L}(r)$ by
    \begin{equation*}
      \rho(\kappa,\lambda)=\kappa^{2}\pi^{*}\omega_{0}+\lambda^{2}\theta^{*}\sigma,
    \end{equation*}
where $\omega_{0}$ is the standard form on
$\mathbb{C}^{n}$, and $\sigma$ is the standard K\"ahler form on
$\mathbb{C}P^{n-1}$, normalized so that $\int_{\mathbb{C}P^{1}}\sigma=\pi$.

  \item Let $\tilde{c}:\mathcal{L}\to\mathcal{L}$ be the map
    $\tilde{c}(z,l)=(\bar{z},\bar{l})$, i.e. the restriction to
    $\mathcal{L}$ of the complex conjugation map on
    $\mathbb{C}^{n}\times\mathbb{C}P^{n-1}$.
  \end{enumerate}
\end{defn}

In addition, the manifolds we treat in our applications will have an
additional structure, as defined by
\begin{defn}
  Let $(M,\omega)$ be a symplectic manifold. A 
\emph{symplectic anti-involution}, 
or 
\emph{real structure}, 
is a diffeomorphism
  $\phi:M\to M$ such that $\phi^{2}=Id$ and
  $\phi^{*}\omega=-\omega$. We call a symplectic manifold equipped
  with a real structure a 
\emph{real symplectic manifold,} or
  simply a \emph{real manifold}, if the symplectic form is
  understood.
\end{defn}

\begin{rem}\label{rem:Fix phi Lagrangian}Note that $Fix(\phi)$ is Lagrangian.
\end{rem}
\begin{defn}
  \label{defn:real embedding}Let $(M,\omega,\phi)$ and
  $(M^{'},\omega^{'},\phi^{'})$ be real symplectic manifolds. We say
  that an embedding $\psi:(M^{'},\omega^{'},\phi^{'})\to(M,\omega,\phi)$ is a \emph{real symplectic embedding} 
if
  $\phi\circ\psi=\psi\circ\phi^{'}$ and
  $\psi^{*}\omega=\omega^{'}$. \end{defn}
\begin{lem}
  Let $(M,\omega_0)$ be a symplectic manifold, and let
  $(N,\omega_1,\phi)$ be a real symplectic manifold with symplectic
  form $\omega_1$ and real structure $\phi$. Suppose that there exists
  a symplectic embedding $\psi:(M,\omega_0)\to (N,\omega_1)$ such that
  $\mbox{Im} (\phi\circ\psi)=\mbox{Im} (\psi)$.  Then there exists an
  anti-symplectic involution $c$ on $M$ such that $\phi\circ
  \psi=\psi\circ c.$
\end{lem}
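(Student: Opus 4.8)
The plan is to define $c$ explicitly as the conjugate of $\phi$ by $\psi$ and then verify the three defining properties of an anti-symplectic involution. Since $\psi$ is an embedding, it restricts to a diffeomorphism onto its image $V:=\psi(M)\subseteq N$, so that $\psi^{-1}\colon V\to M$ is well defined and smooth. The hypothesis $\mathrm{Im}(\phi\circ\psi)=\mathrm{Im}(\psi)$ says precisely that $\phi(V)=V$; as $\phi$ is a diffeomorphism of $N$, it therefore restricts to a diffeomorphism $\phi|_V\colon V\to V$. Consequently $c:=\psi^{-1}\circ\phi\circ\psi$ is a well-defined map $M\to M$ --- the middle composite $\phi\circ\psi$ takes values in $V$, where $\psi^{-1}$ is defined --- and it is a diffeomorphism, being a composite of the diffeomorphisms $\psi\colon M\to V$, $\phi|_V\colon V\to V$, and $\psi^{-1}\colon V\to M$.

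Next I would verify the intertwining relation and the involution property. Since $\psi\circ\psi^{-1}=\mathrm{Id}_V$ and $\phi(\psi(x))\in V$ for every $x\in M$, we get $\psi\circ c=\psi\circ\psi^{-1}\circ\phi\circ\psi=\phi\circ\psi$, which is exactly the asserted identity $\phi\circ\psi=\psi\circ c$. Using this identity together with $\phi^{2}=\mathrm{Id}$,
\[
\psi\circ c^{2}=(\psi\circ c)\circ c=(\phi\circ\psi)\circ c=\phi\circ(\psi\circ c)=\phi\circ\phi\circ\psi=\psi,
\]
and since $\psi$ is injective this forces $c^{2}=\mathrm{Id}_M$.

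Finally, for the anti-symplectic property I would pull back the relation $\omega_0=\psi^{*}\omega_1$ along $c$ and use the intertwining relation together with $\phi^{*}\omega_1=-\omega_1$:
\[
c^{*}\omega_0=c^{*}\psi^{*}\omega_1=(\psi\circ c)^{*}\omega_1=(\phi\circ\psi)^{*}\omega_1=\psi^{*}\phi^{*}\omega_1=\psi^{*}(-\omega_1)=-\omega_0.
\]
The argument is essentially formal once the maps are arranged correctly; the only point needing a moment's care --- and the one genuine use of the hypothesis beyond mere injectivity of $\psi$ --- is the observation that $\phi$ preserves the image $V=\psi(M)$. That is what makes $c$ a well-defined self-map of $M$ rather than a merely partially defined one, and what legitimizes the cancellations $\psi\circ\psi^{-1}=\mathrm{Id}$ at each stage, so I would expect that, rather than any computation, to be the only obstacle such as it is.
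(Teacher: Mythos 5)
Your proposal is correct and takes exactly the same route as the paper: define $c:=\psi^{-1}\circ\phi\circ\psi$, use $\mathrm{Im}(\phi\circ\psi)=\mathrm{Im}(\psi)$ to see this is a well-defined self-map of $M$, and then verify the intertwining, involution, and anti-symplectic properties by direct computation. Your write-up is in fact slightly more complete than the paper's, which leaves the well-definedness and the check that $c^{2}=\mathrm{Id}$ implicit.
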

\begin{proof}
  Define $c:=\psi^{-1}\circ \phi\circ\psi$.  Then $\phi\circ
  \psi=\psi\circ c$ and $c^{*}\omega_0=\psi^{*} \phi^*
  (\psi^{-1})^{*}\psi^{*}\omega_1=-\omega_0$, so $\phi$ is an
  anti-symplectic involution on $M$.
\end{proof}

With the notation in Definition \ref{defn:Notation}, we have
\begin{cor}
  $\tilde{c}^{*}\rho(\kappa,\lambda)=-\rho(\kappa,\lambda),$ and
  $\mathcal{R}=\mbox{Fix}(\tilde{c})$.\end{cor}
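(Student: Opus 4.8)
The plan is to verify the two claims directly from the definitions, since both are essentially coordinate computations pulled back along $\tilde c$ and $\pi$, $\theta$.

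First I would record the naturality of the maps involved. On $\mathbb{C}^n \times \mathbb{C}P^{n-1}$, complex conjugation intertwines the two projections with ordinary complex conjugation on $\mathbb{C}^n$ and on $\mathbb{C}P^{n-1}$; restricting to $\mathcal{L}$, this gives $\pi \circ \tilde c = c_{\mathbb{C}^n} \circ \pi$ and $\theta \circ \tilde c = c_{\mathbb{C}P^{n-1}} \circ \theta$, where $c_{\mathbb{C}^n}$ and $c_{\mathbb{C}P^{n-1}}$ denote the respective conjugations. Hence
\begin{equation*}
  \tilde c^* \rho(\kappa,\lambda) = \kappa^2 \tilde c^* \pi^* \omega_0 + \lambda^2 \tilde c^* \theta^* \sigma = \kappa^2 \pi^* (c_{\mathbb{C}^n}^* \omega_0) + \lambda^2 \theta^* (c_{\mathbb{C}P^{n-1}}^* \sigma).
\end{equation*}
So the problem reduces to the two standard facts that $c_{\mathbb{C}^n}^* \omega_0 = -\omega_0$ and $c_{\mathbb{C}P^{n-1}}^* \sigma = -\sigma$. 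The first is immediate: in coordinates $z_j = x_j + i y_j$ one has $\omega_0 = \sum_j dx_j \wedge dy_j$, and conjugation sends $y_j \mapsto -y_j$, flipping the sign. For the second, the Fubini--Study form is the standard Kähler form for the conjugation-invariant metric, and conjugation is an anti-holomorphic isometry, so it negates the associated $(1,1)$-form; alternatively one checks it on an affine chart $\mathbb{C}^{n-1} \subset \mathbb{C}P^{n-1}$ where $\sigma$ has the explicit potential $\frac{i}{2\pi}\partial\bar\partial \log(1+|w|^2)$ and conjugation swaps $\partial \leftrightarrow \bar\partial$ while fixing the real function $\log(1+|w|^2)$, again producing a sign. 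One should note the normalization $\int_{\mathbb{C}P^{n-1}}\sigma = 1$ is irrelevant here since scaling commutes with pullback. Combining, $\tilde c^* \rho(\kappa,\lambda) = -\kappa^2 \pi^*\omega_0 - \lambda^2\theta^*\sigma = -\rho(\kappa,\lambda)$.

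For the second assertion, $\mathrm{Fix}(\tilde c) = \mathcal{R}$: a point $(z,l) \in \mathcal{L}$ is fixed by $\tilde c$ iff $\bar z = z$ and $\bar l = l$, i.e. iff $z \in \mathbb{R}^n$ and $l$ is a conjugation-fixed line in $\mathbb{C}P^{n-1}$. A complex line $l$ satisfies $\bar l = l$ exactly when it is the complexification of a real line, so the set of such $l$ is precisely $\mathbb{R}P^{n-1} \hookrightarrow \mathbb{C}P^{n-1}$; together with the condition $z \in l \cap \mathbb{R}^n$ this says $(z,l) \in \mathcal{R}$. Conversely every element of $\mathcal{R}^n = \{(x,l) : x \in \mathbb{R}^n \cap l,\ l \in \mathbb{R}P^{n-1}\}$ is visibly fixed.

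I do not anticipate a serious obstacle; the only point requiring minor care is making the identification $\mathrm{Fix}(c_{\mathbb{C}P^{n-1}}) = \mathbb{R}P^{n-1}$ precise (a conjugation-invariant line has a basis that can be taken real, after rescaling by a unit complex number), and being explicit about the sign in $c_{\mathbb{C}P^{n-1}}^*\sigma = -\sigma$ rather than merely asserting it. Since both verifications are local and the relevant conjugations are compatible with the bundle structure, the corollary follows immediately from the lemma-style computation above once these two standard sign facts are in hand.
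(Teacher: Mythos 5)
Your argument is correct and follows essentially the same route as the paper: both use the intertwining relations $\pi\circ\tilde{c}=c\circ\pi$ and $\theta\circ\tilde{c}=\bar{c}\circ\theta$ to reduce the pullback computation to the facts $c^{*}\omega_{0}=-\omega_{0}$ and $\bar{c}^{*}\sigma=-\sigma$, and both identify $\mbox{Fix}(\tilde{c})$ with $\mathcal{R}$ via the fixed sets of $c$ and $\bar{c}$. The only difference is cosmetic: you spell out the sign facts (in particular the Fubini--Study potential argument) that the paper uses implicitly in its tangent-vector computation.
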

\begin{proof}
  Let $c:\mathbb{C}^{n}\to\mathbb{C}^{n}$ and
  $\bar{c}:\mathbb{C}P^{n-1}\to\mathbb{C}P^{n-1}$ denote complex
  conjugation on $\mathbb{C}^{n}$ and $\mathbb{C}P^{n-1}$,
  respectively. Then by the definition of $\tilde{c},$
  $\tilde{c}(z,l)=(c(x),\bar{c}(l))$.  Since
  $\mathbb{R}^{n}=\mbox{Fix}(c)$ and
  $\mathbb{R}P^{n-1}=\mbox{Fix}(\bar{c})$,
  $\mathcal{R}=\mbox{Fix}(\tilde{c})$.

  Now let $(v_{0},w_{0}),(v_{1},w_{1})\in T_{(z,l)}\mathcal{L}\subset
  T_{z}\mathbb{C}^n\oplus T_{l}\mathbb{C}P^{n-1}$. Then
  \begin{align*}
    \tilde{c}^{*}\rho(\kappa,\lambda)((v_{0},w_{0}),(v_{1},w_{1})) =
&\tilde{c}^{*}\pi^{*}\kappa^{2}\omega_{0}((v_{0},w_{0}),(v_{1},w_{1})) +\\
    & \tilde{c}^{*}\theta^{*}\lambda^{2}\sigma((v_{0},w_{0}),(v_{1},w_{1}))\\
    =& -\kappa^2\omega_{0}(v_{0},v_{1})-\lambda^{2}\sigma(w_{0},w_{1})\\
    =& -\rho(\kappa,\lambda)((v_{0},w_{0}),(v_{1},w_{1})),\end{align*}
  which completes the proof.
\end{proof}

In order to put a symplectic form on the blow-up of a manifold $M$, we
will need to consider the relative embeddings of symplectic manifolds,
defined below.

\begin{defn} \label{defn:Relative embedding} 
  Let $(M,\omega,L)$ and
  $(M^{'},\omega^{'},L^{'})$ be symplectic manifolds with Lagrangians
  $L$ and $L^{'}$, respectively.  We say that a map
  $\psi:(M^{'},\omega^{'},L^{'})\to(M,\omega,L)$ is a \emph{relative
    symplectic embedding} when $\psi$ is a symplectic embedding,
  $\psi^{*}\omega=\omega^{'}$, and $\psi^{-1}(L)=L^{'}$.
\end{defn}
We will be primarily concerned with the following example.
\begin{example}
  Let $(M^{2n},\omega,L)$ be a symplectic manifold with Lagrangian
  $L$. Let $(B(\lambda),\omega_{0})$ be the
  ball of radius $\lambda$ in $\mathbb{C}^{n}$ with the standard symplectic structure
  $\omega_{0}$, and let $B_{\mathbb{R}}(\lambda)$ denote the
  ball of radius $\lambda$ in $\mathbb{R}^{n}\subset\mathbb{C}^{n}$. Then a symplectic
  embedding
  $\psi:(B^{2n}(\lambda),\omega_{0})\hookrightarrow(M^{2n},\omega)$
  is a relative symplectic embedding iff
  $\psi^{-1}(L)=B_{\mathbb{R}}(\lambda)$. \end{example}
\begin{rem}
  Note that in Definition \ref{defn:Relative embedding}, we have
  $\psi^{-1}(L)=L^{'}$, and not $\psi(L^{'})\subseteq L$. This is
  an important distinction, as shown by the following example. Let $C$
  denote an embedding of $S^1$ into $\mathbb{C}^1$, and let
\begin{equation*}
\Lambda:=\{\lambda\in\mathbb{R}|\exists \text{ a relative
    embedding }
  \psi:(B^{2}(1),\lambda^{2}\omega_{0},B_{\mathbb{R}}(1))
  \hookrightarrow (\mathbb{C}^{1},\omega_{0},C) \}.
\end{equation*} and
  $\Lambda_{\sup}:=\sup\Lambda$. Then for any $\lambda\in\Lambda$,
  $\lambda^{2}\pi\leq 2A$, where $A$ is the area inside
  $C\subset \mathbb{C}^2$. Therefore $\Lambda_{sup}\leq
  \sqrt{\frac{2A}{\pi}}.$ If, however, we only require that
  $\psi( B_{\mathbb{R}}(1))\subseteq C$, then $\Lambda$ is not
  bounded above.\end{rem}
\begin{defn}
  Let $\psi:\coprod_{i=1}^k
  (B_i(r),\omega_0,B_{\mathbb{R},i}(r))\hookrightarrow (M,\omega,L)$
  be a symplectic embedding, and let $\psi_i:=\psi|_{B_i}$. If $p$ of
  the $\psi_i$'s are relative embeddings, and for the other $q=k-p$ of
  the $\psi_i$'s, we have $Im(\psi_i) \cap L=\emptyset$, then we
  call $\psi$ a \emph{$(p,q)$-mixed embedding}.
\end{defn}

\subsection{Anti-Symplectic Involutions and Almost Complex
  Structures}
\label{subsubsec:Anti-Symplectic involutions and compatible almost
  complex structures}
Our constructions will use auxiliary almost complex structures which
satisfy certain additional properties. In this section, we give the
necessary definitions, and prove the existence of the complex
structures that we need.

\begin{defn}
  \label{defn:Tame acs}
  Let $(M,\omega)$ be a symplectic manifold. Then an almost complex
  structure $J$ \emph{tames $\omega$} or is $\omega$-\emph{tame} if
  $\omega(\cdot,J\cdot)>0$.
\end{defn}

\begin{defn}
  \label{defn:Compatible acs}
  Let $(M,\omega)$ be a symplectic manifold. Then an almost complex
  structure $J$ is \emph{compatible with $\omega$} or is
  $\omega$-\emph{compatible} if $J$ tames $\omega$, and if, in
  addition, $\omega(J\cdot,J\cdot)=\omega(\cdot,\cdot)$.
\end{defn}

\begin{defn}
  \label{defn:Relatively integrable}
  Let $(M,\omega)$ be a symplectic manifold, let $L\subset M$ be a
  Lagrangian submanifold, and let $p$ be a point in $L\subset M$. We
  say that $J$ is \emph{relatively integrable at p} if there is a
  holomorphic chart $U\subset M$, $\alpha:U\to \mathbb{C}^n$ centered
  at $p$ such that $\alpha^{-1}(\mathbb{R}^n)=U\cap L$.
\end{defn}

\begin{defn}
  \label{defn:Symmetrically integrable acs}
  Let $(M,\omega,\phi)$ be a real symplectic manifold with real
  structure $\phi$. Let $L$ denote $Fix(\phi)$, and let $p$ be a point
  in $L$. We say that $J$ is \emph{symmetrically integrable at p} if
  there is a holomorphic chart $U\subset M$, $\alpha:U\to
  \mathbb{C}^n$ centered at $p$ such that
  $\alpha\circ\phi=c\circ\alpha$.
\end{defn}

We first prove the existence of almost complex structures $J$ on a
real symplectic manifold $(M,\omega,\phi)$ which tame $\omega$ and
satisfy $J\phi_*=-\phi_* J$. Our discussion follows the methods in
Cannas da Silva \cite{Cannas_da_Silva_2001} and McDuff and Salamon
\cite{McDuff_Salamon_Intro_1998}.

\begin{defn} Given a symplectic form $\omega$ and an
  $\omega$-compatible almost complex structure $J$, we denote by
  $g_J:V\times V\to \mathbb{R}$ the bilinear form defined by
  \begin{equation}
    g_J(v,w)=\omega(v,Jw).
  \end{equation}
\end{defn}
\begin{lem}
  \label{lem:Sym acs vector space}
  Let $(V,\omega,\Phi)$ be a real symplectic vector space, i.e. a
  vector space $V$ with a closed, non-degenerate, skew-symmetric
  bilinear form $\omega$ and linear map $\Phi$ such that $\Phi^{2}=I$
  and $\Phi^*\omega=-\omega$. Let $\mathcal{J}_{\Phi}(V,\omega)$ be
  the space of $\omega$-compatible almost complex structures on $V$
  with $\Phi J=-J\Phi$, and let $\mathcal{M}et_{\Phi}(V)$ denote the
  space of positive definite bilinear forms $g$ such that
  $\Phi^{*}g=g$. Then there exists a continuous map
  $r:\mathcal{M}et_{\Phi}(V) \to \mathcal{J}_{\Phi}(V,\omega) $ such
  that $r(g_J)=J$.
\end{lem}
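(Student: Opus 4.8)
The plan is to adapt the classical polar-decomposition construction that produces an $\omega$-compatible almost complex structure from a compatible metric (as in McDuff–Salamon), and to check that the extra symmetry hypothesis $\Phi^*g=g$ is exactly what is needed to force the resulting $J$ to anticommute with $\Phi$. First I would recall the construction of the map $r$: given $g\in\mathcal{M}et_\Phi(V)$, there is a unique $\omega$-skew, $g$-self-adjoint automorphism $A$ of $V$ defined by $\omega(v,w)=g(Av,w)$; then $A$ is invertible and $-A^2$ is $g$-positive-definite and $g$-self-adjoint, so it has a unique $g$-positive-definite square root $P:=\sqrt{-A^2}$ (obtained by functional calculus / diagonalizing $-A^2$ in a $g$-orthonormal basis), and one sets $r(g):=J:=P^{-1}A$. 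The standard computations show $J^2=-I$, that $J$ is $\omega$-compatible, and that $g_J(v,w)=\omega(v,Jw)=g(v,w)$, which immediately gives $r(g_J)=J$ and hence that $r$ is injective (it has a left inverse $J\mapsto g_J$).

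The new content is that $J\in\mathcal{J}_\Phi(V,\omega)$, i.e. $\Phi J=-J\Phi$. For this I would work out how $\Phi$ interacts with $A$. From $\Phi^*\omega=-\omega$ and $\Phi^*g=g$, for all $v,w$ we have $g(A\Phi v,\Phi w)=\omega(\Phi v,\Phi w)=-\omega(v,w)=-g(Av,w)=g(-Av,w)$; since $\Phi$ is invertible and $g$ is nondegenerate, after substituting $\Phi w$ for $w$ and using $\Phi^2=I$ this yields $\Phi A\Phi=-A$, i.e. $A\Phi=-\Phi A$. Consequently $A^2\Phi=\Phi A^2$, so $\Phi$ commutes with $-A^2$ and therefore with any polynomial in $-A^2$; since the positive square root $P$ is a limit of such polynomials (or, more cleanly, since $\Phi$ preserves each eigenspace of $-A^2$ and $P$ acts as a positive scalar on each eigenspace), we get $\Phi P=P\Phi$, hence $\Phi P^{-1}=P^{-1}\Phi$. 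Combining, $\Phi J=\Phi P^{-1}A=P^{-1}\Phi A=-P^{-1}A\Phi=-J\Phi$, as desired.

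I also need to confirm that the same hypothesis makes $r$ land in $\mathcal{M}et_\Phi$'s partner space consistently, i.e. that $g_J\in\mathcal{M}et_\Phi(V)$ whenever $J\in\mathcal{J}_\Phi(V,\omega)$, so that the stated equation $r(g_J)=J$ even makes sense on all of $\mathcal{J}_\Phi$; this is a short check: $g_J(\Phi v,\Phi w)=\omega(\Phi v,J\Phi w)=\omega(\Phi v,-\Phi Jw)=-\omega(\Phi v,\Phi Jw)=\omega(v,Jw)=g_J(v,w)$, and positive-definiteness of $g_J$ is part of compatibility. The main obstacle, such as it is, is the step asserting $\Phi P=P\Phi$: one must be slightly careful that the \emph{positive} square root is the one intertwined by $\Phi$ (uniqueness of the positive square root handles this, since $\Phi P\Phi^{-1}$ is again a positive $g$-self-adjoint square root of $-A^2$), but everything else is a routine transcription of the standard polar-decomposition argument with the involution carried along.
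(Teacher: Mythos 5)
Your core construction coincides with the paper's: you define $A$ by $\omega(v,w)=g(Av,w)$, take the unique $g$-self-adjoint, $g$-positive-definite square root $Q=P$ of $A^{*}A=-A^{2}$, set $r(g)=Q^{-1}A$, verify $J^{2}=-Id$ and $\omega$-compatibility, and obtain the anticommutation from $\Phi A\Phi=-A$ together with $\Phi Q\Phi=Q$ (uniqueness of the positive square root, since $\Phi Q\Phi$ is again a $g$-self-adjoint positive square root of $A^{*}A$). This is exactly the paper's argument, and your check that $g_J\in\mathcal{M}et_{\Phi}(V)$ for $J\in\mathcal{J}_{\Phi}(V,\omega)$ is a correct (and worthwhile) addition.

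The gap is the assertion that $g_J(v,w)=\omega(v,Jw)=g(v,w)$ for $J=r(g)$. For a general $g\in\mathcal{M}et_{\Phi}(V)$ this is false: since $Q^{-1}$ is $g$-self-adjoint, $A^{*}=-A$, and $A$ commutes with $Q$, one computes $\omega(v,Jw)=g(Av,Q^{-1}Aw)=g(A^{*}Q^{-1}Av,w)=g(Q^{-1}(-A^{2})v,w)=g(Qv,w)$, which equals $g(v,w)$ only when $Q=Id$, i.e.\ only when $g$ was already $\omega$-compatible. (Concretely, on $(\mathbb{R}^{2},\omega_{0})$ with $g$ the diagonal metric with entries $a,b>0$ one finds $Q=(ab)^{-1/2}Id$, so $g_{r(g)}=(ab)^{-1/2}g\neq g$ unless $ab=1$.) Moreover, even if the identity held it would only give $(J\mapsto g_J)\circ r=\mathrm{id}$, not the property $r(g_J)=J$ required by the lemma, which concerns the opposite composition; that property needs the direct argument the paper uses: if $g=g_J$ with $J$ compatible, then $g_J(Jv,w)=\omega(Jv,Jw)=\omega(v,w)$ shows $A=J$, hence $A^{*}A=Id$, $Q=Id$, and $r(g_J)=J$. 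Your injectivity claim cannot be repaired along these lines either: the map $r$ so constructed satisfies $r(\lambda g)=r(g)$ for every $\lambda>0$ (then $A\mapsto\lambda^{-1}A$ and $Q\mapsto\lambda^{-1}Q$, leaving $Q^{-1}A$ unchanged), so it is not injective on all of $\mathcal{M}et_{\Phi}(V)$; the substantive content of the construction, and all the paper's proof actually establishes, is the retraction property $r(g_J)=J$.
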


The proof follows \cite{Cannas_da_Silva_2001}.
\begin{proof}
  Let $g\in \mathcal{M}et_{\Phi}(V)$ and define the automorphism
  $A:V\to V$ by $\omega(v,w)=g(Av,w)$. Then $\omega(v,w)=-\omega(w,v)$
  implies that $g(Av,w)=-g(v,Aw)$, and therefore that $A^*=-A$. Let
  $A=QJ$ be the polar decomposition of $A$.  Then $Q$ is the unique
  square root of $A^*A$ which is $g$-self-adjoint and
  $g$-positive-definite.  We claim that $J_g:=Q^{-1}A$ is a complex
  structure compatible with $\omega$. First, note that $A$ commutes
  with $Q$,
  and therefore $J_g^2=Q^{-1}AQ^{-1}A=-Id$, so $J_g$ is an almost
  complex structure. To see that it is orthogonal, we have
  {\allowdisplaybreaks
  \begin{align*}
    \omega(J_gv,J_gw) & = g(AQ^{-1}Av,Q^{-1}Aw)\\
    & = g(AQ^{-1}Av,Q^{-1}AQ^{-1}A^*Q^{-1}Aw) \\
    & = g(Q^{-1}AQ^{-1}Av,AQ^{-1}A^*Q^{-1}Aw) \\
    & = g(-AQ^{-1}AQ^{-1}Av,Q^{-1}A^*Q^{-1}Aw) \\
    & = g(-AQ^{-1}AQ^{-1}Av,-Q^{-1}AQ^{-1}Aw) \\
    & = g(IAv,Iw) \\
    & = g(Av,w) = \omega(v,w).
  \end{align*} } Here, $I$ denotes the identity, and the second and second to
last equalities follow because $A^* = -A$ and $Q^{-1}AQ^{-1}A = -I$. Also, 
  $\omega(v,J_gv)=g(Av,Q^{-1}Av)=g(v,A^*Q^{-1}Av)=g(v,Q^{-1}A^*Av)>0$,
  since both $Q$ and $A^*A$ are positive definite. Therefore $J_g$ is
  compatible with $\omega$. 

  Define $J_g:= r_{\Phi}(g) = Q^{-1}A$. For an $\omega$-compatible almost
  complex structure
  $J$, we define $g_J = \omega(\cdot,J\cdot)$, and we note that
  \begin{equation*}r_{\Phi}(g_J)=r_{\Phi}(\omega(\cdot,J\cdot))=J,
  \end{equation*}since, in this case, $J=A$ and $Q=Id$.

  To see that $\Phi J_g=-J_g \Phi$, we recall that $\Phi^*g = g$ by hypothesis,
and we note that
  $-g(Av,w)=\Phi^{*}\omega(v,w)$, and therefore
  \begin{equation*}
    -g(Av,w)=\omega(\Phi v,\Phi w)=g(A\Phi
    v,\Phi w)=g(\Phi A\Phi v,w),
  \end{equation*} and therefore $\Phi A
  \Phi=-A$. Now note that $\Phi A^*A \Phi = -\Phi A^2 \Phi = \Phi A \Phi A = -A^2=A^*A$. Therefore $\Phi Q \Phi=Q$ as well,
  and $J_g\Phi=Q^{-1}A\Phi=-Q^{-1}\Phi A=-\Phi Q^{-1}A=-\Phi J_g$, as
  desired.

To see that the map is continuous, first note that the map $r_{\Phi}$ defined
above is
the restriction to the set $\mathcal{M}et_{\Phi}(V)$ of the map
$r:\mathcal{M}et(V) \to \mathcal{J}(V,\omega)$ defined in McDuff and
Salamon
\cite{McDuff_Salamon_Intro_1998}, Proposition 2.50(ii). Since the restriction of
a
continuous map is continuous (see, for instance, Munkres \cite{Munkres_2000}),
Theorem 18.2(d)), the result follows.
\end{proof}

\begin{cor}
  \label{cor:Existence of sym acs}
  Let $(M,\omega,\phi)$ be a real symplectic manifold. Let
  $\mathcal{J}_{\phi}(V,\omega)$ denote the space of
  $\omega$-compatible almost complex structures on $V$ with $\phi_*
  J=-J\phi_*$, and let $\mathcal{M}et_{\phi}(M)$ denote the space of
  positive definite bilinear forms $g$ such that $\phi^{*}g=g$. Then
  there exists a continuous map $r:\mathcal{M}et_{\phi}(M) \to
  \mathcal{J}_{\phi}(V,\omega) $ such that $r(g_J)=J$.
\end{cor}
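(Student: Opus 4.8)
The plan is to apply Lemma~\ref{lem:Sym acs vector space} fiberwise over $M$ and then check that the resulting field of endomorphisms of $TM$ is smooth. For each $p\in M$, the triple $(T_pM,\omega_p,(\phi_*)_p)$ is a real symplectic vector space in the sense of Lemma~\ref{lem:Sym acs vector space}: $(\phi_*)_p^{2}=\mathrm{Id}$ because $\phi^{2}=\mathrm{Id}$, and $(\phi_*)_p^{*}\omega_p=-\omega_p$ because $\phi^{*}\omega=-\omega$. If $g\in\mathcal{M}et_\phi(M)$, then at every point $g_p$ is a positive-definite symmetric form on $T_pM$ with $(\phi_*)_p^{*}g_p=g_p$, i.e.\ $g_p\in\mathcal{M}et_{(\phi_*)_p}(T_pM)$ (such metrics exist, e.g.\ by averaging an arbitrary one over $\{\mathrm{Id},\phi\}$). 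We then define $r(g)$ pointwise by $r(g)_p:=r_p(g_p)$, where $r_p$ is the map furnished by Lemma~\ref{lem:Sym acs vector space} applied to $T_pM$. Granting that $p\mapsto r(g)_p$ is smooth, the facts that $r(g)$ is $\omega$-compatible, that $\phi_*\,r(g)=-r(g)\,\phi_*$, that $r(g_J)=J$, and that $r$ is injective (since each $r_p$ is) all follow at once, pointwise, from the lemma.

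So the only new content is smoothness. By the construction in Lemma~\ref{lem:Sym acs vector space}, $r_p(g_p)=Q_p^{-1}A_p$, where $A_p\in\mathrm{End}(T_pM)$ is defined by $\omega_p(v,w)=g_p(A_pv,w)$ and $Q_p$ is the positive $g_p$-self-adjoint square root of $A_p^{*}A_p$. The endomorphism field $A$ is obtained from $g$ and $\omega$ by fiberwise linear algebra (in a local frame $A=-G^{-1}\Omega$, with $G$ and $\Omega$ the smooth, everywhere-invertible matrix fields of $g$ and $\omega$), hence is smooth; likewise the $g$-adjoint field $A^{*}=G^{-1}A^{T}G$ is smooth, and therefore so is $P:=A^{*}A$, a smooth field of $g$-self-adjoint, positive-definite endomorphisms. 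It remains only to see that the positive square root $Q=P^{1/2}$ depends smoothly on $P$.

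This is routine. The map $X\mapsto X^{2}$ on positive-definite symmetric endomorphisms has derivative $H\mapsto XH+HX$ at $X$, which is invertible (in a basis diagonalizing $X$ with eigenvalues $\mu_i>0$ it multiplies the $(i,j)$ entry of $H$ by $\mu_i+\mu_j>0$), so by the inverse function theorem its global inverse $P\mapsto P^{1/2}$ is smooth. (Alternatively, near any point of $M$ the spectrum of $P_q$ lies in a fixed compact subset of $(0,\infty)$, so one may write $Q_q=\tfrac{1}{2\pi i}\oint_\Gamma\sqrt{\zeta}\,(\zeta\,\mathrm{Id}-P_q)^{-1}\,d\zeta$ for a fixed contour $\Gamma$ enclosing that set, which is manifestly smooth in $P_q$.) Hence $Q$, $Q^{-1}$ and $r(g)=Q^{-1}A$ are smooth fields of endomorphisms of $TM$, so $r(g)$ is a genuine almost complex structure, and the corollary follows.

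I expect no serious obstacle: everything except the smooth dependence of the polar factor $Q$ on the data is a verbatim fiberwise transcription of Lemma~\ref{lem:Sym acs vector space}, and that smoothness reduces, as above, to the standard smoothness of the operator square root on positive-definite endomorphisms.
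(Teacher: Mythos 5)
Your proposal is correct and follows essentially the same route as the paper: apply Lemma \ref{lem:Sym acs vector space} fiberwise to $(T_pM,\omega_p,(\phi_*)_p)$ for a $\phi$-invariant metric $g$. The only difference is that you spell out the smooth dependence of the polar factor $Q=(A^*A)^{1/2}$ on the point, which the paper compresses into the remark that the polar decomposition is canonical.
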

\begin{proof}
  Let $g$ be a $\phi$-invariant Riemannian metric on $M$. Since the
  polar decomposition is canonical, we may construct an almost complex
  structure $J$ by constructing $J_{x}$ as in Lemma
  \ref{lem:Sym acs vector space} for each $x\in M$. By Lemma \ref{lem:Sym acs
vector space}, $J$ is
  $\omega$-compatible, and $J_x\phi_*=-\phi_*J_x$ for each $x\in Fix(\phi)$.

Now let $x\in M\backslash Fix(\phi)$. The proof that $\phi_* J_x=-J_{\phi(x)}
\phi_*$ 
also follows the proof of Lemma \ref{lem:Sym acs vector space}. In particular,
we first have that
  $-\omega_{x}(v,w)=\phi^{*}\omega_{x}(v,w)$, and therefore
  \begin{align*}
    -g_x(A_xv,w) & = -\omega_x(v,w) = \omega_{\phi(x)}(\phi_* v,\phi_* w) \\
& = g_{\phi(x)}(A_{\phi(x)}\phi_*v, \phi_* w) \\
& = g_{x}(\phi_*
A_{\phi(x)}\phi_* v,w),
  \end{align*} for all $v,w\in T_xM$, and therefore $\phi_* A_{\phi(x)}
  \phi_*=-A_x$. Now note that 
\begin{equation*}\phi_* A_{\phi(x)}^*A_{\phi(x)} \phi_* = -\phi_* A_{\phi(x)}^2
\phi_* = \phi_* A_{\phi(x)}
\phi_* A_x = -A_x^2=A_x^*A_x.
\end{equation*} Therefore $\phi_* Q_{\phi(x)} \phi_*=Q_x$ as well,
  and
\begin{equation*}J_{\phi(x)}\phi_*=Q_{\phi(x)}^{-1}A_{\phi(x)}\phi_*=-Q_{\phi(x)}^{-1}\phi_* A_x=-\phi_* Q_x^{-1}A_x=-\Phi J_x,
\end{equation*} as desired.
\end{proof}

\begin{rem}
  Indeed, this corollary shows that, for a real symplectic
  manifold $(M,\omega,\phi)$, there exists an $\omega$-compatible (and
  therefore tame) almost complex structure $J$ with
  $\phi_*J=-J\phi_*$.
\end{rem}

\begin{rem}
  \label{rem:Locally integrable acs}
  Note that if
  $\psi:(B(1+2\epsilon),\lambda^2\omega_0,B_{\mathbb{R}}(1+2\epsilon))\to
  (M,\omega,L)$ is a relative or real symplectic embedding, then the
  above constructions imply that there exists an $\omega$-tame
  (compatible) almost complex structure $J$ which equals $\psi_* i
  \psi^{-1}_*$ on a neighborhood of $\psi(0)$, and therefore $J$ is
  symmectrically or relatively integrable at $\psi(0)$ if $\psi$ is a
  real or relative embedding, respectively. If, in addition, $M$ has a
  real structure $\phi$ and $\psi$ is a real symplectic embedding,
  then $J$ also may be taken to satisfy $\phi_*J\phi_*=-J$. Similarly,
  if
  $\tilde{\psi}:(\mathcal{L}(1+2\epsilon),\rho(1,\delta),\mathcal{R}(1+2\epsilon))\to
  (\tilde{M},\tilde{\omega},\tilde{L})$ is a real or relative
  embedding, then there exists an $\tilde{\omega}$-tame almost complex
  structure $\tilde{J}$ such that
  $\tilde{J}=\tilde{\psi}_*\tilde{i}\tilde{\psi}^{-1}_*$ in a
  neighborhood of $\mathcal{L}(0)$.
\end{rem}

\subsection{Main Results}

We now state our main theorems, using the notation in Section
\ref{subsec:Setting and Notation}. Theorem \ref{thm:Blow-up}
is proved in Section \ref{subsec:Blow-up-down}, the proof of \ref{thm:Blow-down}
is completed in Section \ref{sec:Topological criterion}.	

\begin{thm}[Blow-up]
\label{thm:Blow-up}
\begin{enumerate}
  \item Let $(M,\omega)$ be a symplectic manifold and let $L \subset M$ be a 
  Lagrangian submanfiold. Suppose that for some small $\epsilon > 0$ there is a
    $(p,q)$-mixed symplectic embedding
\begin{equation*}
\psi:\coprod_{j=1}^{k}(B_{j}(1+2\epsilon),\lambda_j^2\omega_{0},B_{\mathbb{R},j}(1+2\epsilon))\hookrightarrow(M,\omega,L),
\end{equation*}let $P\subset M$ be the set $P:=\{\psi_j(0)\}_{j=1}^k$, and
let $J$ be an $\omega$-tame (compatible) almost complex structure which is
locally symetrically integrable in a neighborhood of $P$.

Then there exists a manifold
$\tilde{M}$, a family of symplectic forms $\tilde{\omega}_t, t\in
[0,1]$ on $\tilde{M}$, a submanifold $\tilde{L}\subset \tilde{M}$ which is
Lagrangian for each $\tilde{\omega_t}$, and an onto map $\Pi:\tilde{M}\to M$
such that the following is satisfied:
\begin{enumerate}
\item $\Pi$ is a diffeomorphism on $\Pi^{-1}(M\backslash P)$,
\item $\Pi^{-1}(\psi_j(0))\cong \mathbb{C}P^{n-1}$,
\item $\Pi(\tilde{L})=L$,
%Change: added this item, and changed omega to omega_1 in the next.
\item $\tilde{\omega}_0$ tames (is compatible with) an almost complex
structure $\tilde{J}$ for which each $\Pi^{-1}(\psi_j(0))$ is an almost complex
manifold, and
\item $\tilde{\omega}_1$ is in the cohomology class
\begin{equation*}
[\tilde{\omega}_1]=[\Pi^*\omega]+\sum_{j=1}^k \lambda_j^{2}e_{j},
\end{equation*}
where the $e_j$ are the Poincar\'e duals of the exceptional classes $E_j=[\Pi^{-1}(\psi_j(0))]$.
\end{enumerate}
  \item If, in addition, $M$ admits an anti-symplectic involution
    $\phi$ which satisfies
\begin{enumerate} 
  \item $\mbox{Fix} (\phi)=L$, 
 \item  $Im(\phi\circ \psi)=Im(\psi)$,
 \item  $Im(\phi\circ \psi_j)\cap Im(\psi_j)= \emptyset$ if
    $Im(\psi_j)\cap L=\emptyset$, and 
 \item $\psi_j \circ c=\phi \circ
    \psi_j$ if $Im(\psi_j)\cap L \neq \emptyset$, 
\end{enumerate}
then $\tilde{M}$ admits an involution $\tilde{\phi}:\tilde{M} \to
\tilde{M}$ such that $\tilde{\phi}^*\tilde{\omega}_t = -\tilde{\omega}_t$
 and $\phi \circ \Pi=\Pi \circ \tilde{\phi}$, and
$\tilde{\phi}_*\tilde{J}\tilde{\phi}_* = \tilde{J}$
  \end{enumerate}

%%% Local Variables: 
%%% mode: plain-tex
%%% TeX-master: "../Antonio Rieser Thesis"
%%% End:   
\end{thm}

\begin{thm}[Blow-down]
  \label{thm:Blow-down}
\begin{enumerate}
%\label{enu:Blow-down item 1}
\item Let $(\tilde{M},\tilde{\omega})$ be a symplectic manifold with
    Lagrangian $\tilde{L}$. Suppose there is a $(p,q)$-mixed
    symplectic embedding
\begin{equation*}\tilde{\psi}:\coprod_{j=1}^{k}(\mathcal{L}_j(r_j),\rho_j(\delta_j,\lambda_j),\mathcal{R}_j(r_j))\hookrightarrow(\tilde{M},\tilde{\omega},\tilde{L})
\end{equation*}
 such that $\psi^{-1}(\tilde{L})=\coprod_{j=1}^{p}\mathcal{R}_j(r_j)$. Let $C_j\subset \tilde{M}$ denote $\tilde{\psi}_j(\mathcal{L}(0))$, and let $C=\cup_j C_j$.

   Then there exists a symplectic manifold $(M,\omega)$, a $(p,q)$-mixed symplectic embedding \begin{equation}
\psi:\coprod_{j=1}^k (B(1+2\epsilon),\lambda_j\omega_0,B_{\mathbb{R}}(1+2\epsilon)) \to (M,\omega,L),
\end{equation} 
a Lagrangian submanifold $L\subset M$, and an onto map $\Pi:\tilde{M}\to M$ such that the following is satisfied:
\begin{enumerate}
\item $\Pi$ is a diffeomorphism on $\tilde{M} \backslash C$,
\item $\Pi(C_j) = p_j \in M$, where $p_j$ is a point,
\item $\Pi(\tilde{L})=L$, and
\item $\omega$ satisfies
\begin{equation*}
[\tilde{\omega}]-[\Pi^*\omega]\in \mathcal{E},
\end{equation*}

where $\mathcal{E}$ is the linear vector space generated by $e_1,\dots,e_k$, the Poincar\'e duals of the exceptional classes $E_j=[\tilde{\psi}_j(0)]$.
\end{enumerate}
  \item Suppose, in addition, $\tilde{M}$ admits an anti-symplectic
    involution $\tilde{\phi}$ which satisfies
\begin{enumerate}
\item $\mbox{Fix}(\tilde{\phi})=\tilde{L}$,
\item $Im(\tilde{\psi})=Im(\tilde{\phi} \circ \tilde{\psi})$,
\item $Im(\tilde{\phi}\circ \tilde{\psi_i})\cap Im(\tilde{\psi_i})= \emptyset$ if $Im(\psi_i)\cap L=\emptyset$, and
\item $\tilde{\psi}_i \circ \tilde{c}=\tilde{\phi} \circ \tilde{\psi}_i$ if $Im(\tilde{\psi}_i)\cap \tilde{L}
    \neq \emptyset$. 
\end{enumerate}

Then $(M,\omega)$ admits an anti-symplectic
    involution $\phi$ such that $\phi \circ \Pi=\Pi \circ \tilde{\phi}$.
  \end{enumerate}

%%% Local Variables: 
%%% mode: plain-tex
%%% TeX-master: "../Antonio Rieser Thesis"
%%% End: 
\end{thm}

The idea of the relative blow-up construction is the same as blowing
up in the purely symplectic case: we remove the interior of a
ball from both $M$ and $\overline{\mathbb{C}P}^{n}$ (the bar indicating
that the orientation is reversed), and we glue them along their
boundaries, ensuring that the symplectic form $\tilde{\omega}$ of the
blow up $\tilde{M}$ acts appropriately. The difference in the relative
case is that the real parts of the balls removed from $M$ and
$\overline{\mathbb{C}P}^{n}$ are constrained to intersect the Lagrangians $L$ and
$\mathbb{R}P^{n}$, respectively, and the gluing proceedes so that the boundary of
the ($n$-dimensional) ball removed from $L$ is then glued to the
boundary of the corresponding hole in $\mathbb{R}P^{n}$, resulting in
the new Lagrangian $L\#\mathbb{R}P^{n}\cong\tilde{L}\subset\tilde{M}$ in
the blow-up.  The blow-down is the reverse process. We make these
operations precise in Section \ref{subsec:Blow-up-down}.

In four-dimensional complex geometry and symplectic topology, it is
extremely useful to know that one can blow down a symplectic manifold
$M$ along an embedded J-holomorphic sphere $C$ when $[C]\cdot [C]=-1$. In
complex geometry this is the so-called Castelnuovo-Enriques criterion
(see, for example, \cite{Griffiths_Harris_1978}, p.476). Unfortunately, it is a
difficult problem in general to derive a condition to detect
when blowing-down $C$ can be arranged to change the topology of a (non-orientable)
Lagrangian $\tilde{L}$. However, for Lagrangian submanifolds which are the fixed
point set of an anti-symplectic involution $\phi$ on a symplectic
$4$-manifold $M$, we have the following result, which we prove in Section \ref{sec:Topological criterion}.

We first give the following definition.

\begin{defn}
\label{defn:Exceptional}
We call $E\in H_2(M^4;\mathbb{Z})$ an \emph{exceptional class} if $E\cdot
E=-1$. 
If $u:\Sigma \hookrightarrow M^4$ is an embedding of the surface $\Sigma$, and
$u_*[\Sigma]=E$, then we say that $u(\Sigma)$ is an \emph{exceptional curve}. 
\end{defn}

\begin{thm}
\label{thm:Real blow-down condition}
%Change: M -> M^{4}
  Let $(M^{4},\omega,\phi)$ be a real symplectic manifold with
  $L:=\mbox{Fix}(\phi)$, and let $J$ be an almost complex structure on 
  $M$ which tames $\omega$ and which satisfies $\phi_*J\phi_*=-J$. Suppose
$E\in
  H_{2}(M;\mathbb{Z})$ satisfies $E\cdot E=-1$ and
  $\phi_{*}E=-E$, and that there exists an embedded
$J$-holomorphic curve $C$ which represents $E$. Then
there exists a real symplectic manifold
  $(\check{M},\check{\omega},\check{\phi})$ and an onto map
  $\Pi:M\to \check{M}$ that satisfies
\begin{enumerate}
\item $\Pi$ is a diffeomorphism on $M \backslash C$,
\item $\Pi(C) = p \in \check{M}$, where $p$ is a point,
\item $\Pi\circ \phi=\check{\phi}\circ \Pi$, and
\item $\check{\omega}$ satisfies
\begin{equation*}
[\omega]-[\Pi^*\check{\omega}]\in \mathcal{E},
\end{equation*}
where $\mathcal{E}$ is the linear vector space generated by $e$, the Poincar\'e dual of the exceptional class $E=[\Pi^{-1}(p)]$.
\end{enumerate}

%%% Local Variables: 
%%% mode: plain-tex
%%% TeX-master: "../Antonio Rieser Thesis"
%%% End: 
\end{thm}

As an application of the above theorems, we have the following theorem on the
real packing numbers for $(\mathbb{C}P^2,\mathbb{R}P^2)$, defined below.

\begin{defn}
\label{defn:Relative packing number}
Let $(M,\omega)$ be a symplectic manifold with Lagrangian submanifold $L\subset M$. We call the number
\begin{equation*}
p_{L,k}:=\sup_{\psi}\frac{\text{Vol}\left(\coprod_{i=1}^k (B(\lambda),\omega_0,B_{\mathbb{R}}(\lambda))\right)}{\text{Vol}(M)}
\end{equation*}
the \emph{k-th relative packing number for} $(M,L)$, where the $\sup$ is taken over all relative symplectic embeddings
\begin{equation*}
\psi:\coprod_{i=1}^k (B(\lambda),\omega_0,B_{\mathbb{R}}(\lambda))\to (M,\omega,L).
\end{equation*}
If $M$ is a real manifold with real structure $\phi$, $Fix(\phi)=L$,
and the $\sup$ is taken over all real embeddings of $k$ balls, then
$p_{L,k}$ is called the \emph{$k$-th real packing number}. We will
also denote the $k$-th real packing number by $p_{\mathbb{R},k}$. 
If the supremum is taken over all symplectic embeddings of $k$ balls into $M$,
then we denote the number $p_k$ and we call it the \emph{$k$-th packing number
of} $M$. 
\end{defn}
\begin{thm}
  \label{thm:Relative packing rp2}
% \begin{enumerate}
%\item
For the pair $(\mathbb{C}P^{2},\mathbb{R}P^{2})$ with the standard
  symplectic form and real structure, the relative packing numbers
  $p_{\mathbb{R}P^2,k}$ are equal to the absolute packing numbers
for
  $\mathbb{C}P^{2}$.
% \item For the pair $(S^2 \times S^2, S^1 \times S^1)$ with a rank-$1$
%symplectic form and the standard real structure, the relative packing numbers
%$p_{S^1\times S^1,k}$ are equal to the packing numbers for $S^2 \times S^2$.
%\end{enumerate}

%%% Local Variables: 
%%% mode: plain-tex
%%% TeX-master: "../Antonio Rieser Thesis"
%%% End: 
\end{thm}

\section{Constructing the Relative and Real Blow-up and Blow-down}
\label{subsec:Blow-up-down}

We now construct the blow-up and blow-down of a symplectic manifold
$(M,\omega)$ relative to a Lagrangian submanifold $L$ or a real structure $\phi.$
The general strategy is to perform a complex blow-up or blow-down
locally and then define a symplectic form for the resulting
manifold. In each case, we first discuss the local models for the
symplectic forms in these constructions, and we then construct the
global blow up and blow down given a mixed, relative or real
symplectic embedding
\begin{align*}
\psi&:\coprod_{j=1}^k(B_j(1+2\epsilon),\lambda_j^{2}\omega_{0},B_{\mathbb{R},j}
(1+2\epsilon))\hookrightarrow
(M,\omega,L), \text{ or}\\
\tilde{\psi}&:\coprod_{j=1}^k(\mathcal{L}_j
(1+2\epsilon), \rho (\delta,\lambda_j),\mathcal{R}_j ) \hookrightarrow
(\tilde{M},\tilde{\omega},\tilde{L})
\end{align*} and the local models.

\subsection{Blow-up}\label{subsubsec:Blow-up}

In this section, we prove Theorem \ref{thm:Blow-up}, which we
restate
here for the convenience of the reader.

\begin{thm*}[Theorem \ref{thm:Blow-up}]

\end{thm*}

The construction proceeds as follows. We first construct a family of
symplectic forms $\tilde{\tau}(\epsilon, \lambda)$ on $\mathcal{L}$ by
pulling back the standard form $\omega_0$ on $\mathbb{R}^{2n}$ by a
family of specially constructed maps from
$\mathcal{L}\to \mathbb{R}^{2n}$. We arrange, in
particular, that the submanifold $\mathcal{R} \subset \mathcal{L}$ is
a Lagrangian for the forms $\tilde{\tau}(\epsilon,\lambda)$. We then
consider a relative symplectic and holomorphic embedding
$\psi:(B(1+2\epsilon),\lambda^2\omega_0,B_{\mathbb{R}}(1+2\epsilon),i)\to
(M,\omega,L,J)$, and we construct the blow-up manifold
$(\tilde{M},\tilde{L})$ by removing the ball and gluing in
$(\mathcal{L}(1+2\epsilon),\mathcal{R}(1+2\epsilon))$ along the
boundary. Finally, we use the local forms $\tilde{\tau}(\epsilon,\lambda)$ created on $\mathcal{L}$ in
the first step to construct the global symplectic form
$\tilde{\omega}$ on the blow-up $\tilde{M}$. For a real manifold $M$,
we also construct a real structure on the blow up $\tilde{M}$. We then show
that, given a relative symplectic embedding, and in view of some appropriate (and non-restrictive) assumptions on the almost complex structures, we may find a holomorphic
embedding of a smaller ball which is compatible with $L$ (or a real structure $\phi$), and we use this to remove the assumption
of holomorphicity on the embeddings.

In the following proposition, we construct the forms
$\tilde{\tau}(\epsilon, \lambda)$. Note that points \ref{enu:Pull-back
  of standard form}, \ref{enu:Local model is locally rho}, and
\ref{enu:Bar-tau-is-compatible} were proved in Proposition 5.1.A of
McDuff and Polterovich \cite{McDuff_Polterovich_1994}.
\begin{prop}
  \label{prop:Blow-up local model}Using the notation in Section
  \ref{subsec:Setting and Notation}, for every $\epsilon,\lambda>0$ there exists a
  symplectic form $\tilde{\tau}(\epsilon,\lambda)$ on $\mathcal{L}$
  such that the following holds:
  \begin{enumerate}
  \item \label{enu:Pull-back of standard form}
    $\tilde{\tau}(\epsilon,\lambda)=\pi^{*}(\lambda^{2}\omega_{0})$ on
    $\mathcal{L}-\mathcal{L}(1+\epsilon)$
  \item $\tilde{\tau}(\epsilon,\lambda)=\rho(1,\lambda)$ on
    $\mathcal{L}(\delta)$ for some $\delta>0$\label{enu:Local model is locally rho}
  \item $\tilde{\tau}(\epsilon,\lambda)$ is compatible with $\tilde{i}$, the
    canonical integrable complex structure on
    $\mathcal{L}$.\label{enu:Bar-tau-is-compatible}
  \item
    $\tilde{c}^{*}\tilde{\tau}(\epsilon,\lambda)=-\tilde{\tau}(\epsilon,\lambda)$,
    where $\tilde{c}$ denotes complex conjugation on
    $\mathcal{L}$.\label{enu:Symmetry in local blow up form}
  \item
    $\tilde{\tau}(\epsilon,\lambda)|_{\mathcal{R}}=0$\label{enu:R(r) is Lagrangian for bar(tau)}
  \end{enumerate}
\end{prop}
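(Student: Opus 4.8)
The plan is to build $\tilde{\tau}(\epsilon,\lambda)$ essentially as McDuff and Polterovich do for the ordinary symplectic blow-up, but with the construction arranged symmetrically with respect to the conjugation $\tilde{c}$ so that $\mathcal{R}$ becomes Lagrangian. Recall the McDuff--Polterovich model: one identifies a neighborhood of the zero section in $\mathcal{L}$ with a neighborhood of the blown-up origin in $\mathbb{C}^n$ via the standard blow-down map, and then interpolates between the pulled-back form $\pi^*(\lambda^2\omega_0)$ (valid away from the zero section) and the form $\rho(1,\lambda)$ near the zero section. Concretely, one writes $\tilde{\tau}(\epsilon,\lambda) = \pi^*(\lambda^2\omega_0) + \lambda^2 d(\beta(|\cdot|^2)\,\theta^*\alpha)$ for a suitable primitive $\alpha$ of $\sigma$ on the total space of the circle bundle and a cutoff function $\beta$; equivalently one pulls back $\omega_0$ on $\mathbb{R}^{2n}$ by a family of embeddings $\mathcal{L}\to\mathbb{R}^{2n}$ that look like the blow-down map near infinity and like the inclusion of $\mathcal{L}(\delta)$ near the zero section. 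Points \ref{enu:Pull-back of standard form}, \ref{enu:Local model is locally rho}, and \ref{enu:Bar-tau-is-compatible} are then quoted verbatim from Proposition 5.1.A of \cite{McDuff_Polterovich_1994}; the only genuinely new content is \ref{enu:Symmetry in local blow up form} and \ref{enu:R(r) is Lagrangian for bar(tau)}.

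For the symmetry statement \ref{enu:Symmetry in local blow up form}, I would first observe that the conjugation map $\tilde{c}$ on $\mathcal{L}$ commutes with both projections up to the conjugations $c$ on $\mathbb{C}^n$ and $\bar{c}$ on $\mathbb{C}P^{n-1}$ (this is exactly the computation carried out in the corollary proved above), and that $c^*\omega_0=-\omega_0$ and $\bar{c}^*\sigma=-\sigma$, so $\tilde{c}^*\rho(\kappa,\lambda)=-\rho(\kappa,\lambda)$. The strategy is therefore to check that every ingredient in the interpolation can be chosen $\tilde{c}$-equivariant: the radial cutoff function $\beta$ depends only on $|z|$, which is $\tilde{c}$-invariant; the primitive $\alpha$ of $\sigma$ on the circle bundle can be averaged (replace $\alpha$ by $\tfrac12(\alpha - \bar c^*\alpha)$) so that $\bar c^*\alpha = -\alpha$, which does not disturb $d\alpha=\sigma$; and the blow-down map $\mathcal{L}\to\mathbb{C}^n$ intertwines $\tilde{c}$ with $c$ by construction. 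With these equivariant choices, applying $\tilde{c}^*$ to the formula for $\tilde{\tau}(\epsilon,\lambda)$ negates each summand, giving $\tilde{c}^*\tilde{\tau}(\epsilon,\lambda)=-\tilde{\tau}(\epsilon,\lambda)$. Alternatively, if one builds $\tilde\tau$ by pulling back $\omega_0$ on $\mathbb{R}^{2n}$ along a map $F:\mathcal{L}\to\mathbb{R}^{2n}$, one arranges $F\circ\tilde c = c\circ F$ (again, average if necessary), and then $\tilde c^*\tilde\tau = \tilde c^* F^*\omega_0 = F^* c^*\omega_0 = -F^*\omega_0 = -\tilde\tau$.

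Statement \ref{enu:R(r) is Lagrangian for bar(tau)} then follows almost formally: $\mathcal{R}=\mathrm{Fix}(\tilde c)$ by the corollary above, and any submanifold fixed by an anti-symplectic involution for a symplectic form is Lagrangian for that form. Indeed, for $v,w\in T_x\mathcal{R}$ we have $\tilde c_* v = v$, $\tilde c_* w = w$, so $\tilde\tau(v,w) = \tilde\tau(\tilde c_* v,\tilde c_* w) = (\tilde c^*\tilde\tau)(v,w) = -\tilde\tau(v,w)$, hence $\tilde\tau(v,w)=0$; since $\dim\mathcal{R}=n=\tfrac12\dim\mathcal{L}$, $\mathcal{R}$ is Lagrangian. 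I expect the main obstacle to be purely bookkeeping: verifying that the particular interpolation used in \cite{McDuff_Polterovich_1994} really can be made $\tilde c$-equivariant without breaking properties \ref{enu:Pull-back of standard form}--\ref{enu:Bar-tau-is-compatible}—in particular, that the averaging of the primitive $\alpha$ is compatible with the cutoff region and with compatibility with $\tilde i$ (which holds because $\tilde i$ is $\tilde c$-conjugate-linear, so a $\tilde c$-symmetric form that is $\tilde i$-compatible stays $\tilde i$-compatible). Once equivariance of the building blocks is established, the symmetry and Lagrangian conclusions are immediate.
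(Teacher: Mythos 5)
Your proposal is correct and follows essentially the same route as the paper: the paper constructs $\tilde{\tau}(\epsilon,\lambda)=\pi^{*}F^{*}\omega_{0}$ for an explicit monotone radial interpolating map $F$ (extending by $\rho(1,\lambda)$ across the zero section), and its proofs of items \ref{enu:Symmetry in local blow up form} and \ref{enu:R(r) is Lagrangian for bar(tau)} are exactly your second alternative — a radial $F$ automatically satisfies $F\circ c=c\circ F$, so $\tilde{c}^{*}\tilde{\tau}=-\tilde{\tau}$, and then $\mathcal{R}=\mbox{Fix}(\tilde{c})$ is Lagrangian. The only cosmetic difference is that the paper verifies items \ref{enu:Pull-back of standard form}--\ref{enu:Bar-tau-is-compatible} directly from its own lemmas rather than quoting \cite{McDuff_Polterovich_1994}, and no averaging of a primitive is needed since the radial construction is already equivariant.
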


The proof of this proposition will be based on the following three
lemmas. Lemma \ref{lem:The calculation} was proved in
\cite{Guillemin_Sternberg_1989}, although it may also be proved by a
direct, if long, calculation. Lemma \ref{lem:Omega compatible i K\"ahler}
is a well-known result which we state so we may refer to it later,
and Lemma \ref{lem:Pull back of radial function is K\"ahler} was stated and the
proof sketched in
\cite{McDuff_Polterovich_1994}. The details of the proof are a
routine calculation. We begin with a definition.

\begin{defn}
\label{defn:Monotone radial}
  We say that $f:\mathbb{C}^n \to \mathbb{C}^n$ is a \emph{radial
    function} if $f(z)=\alpha(|z|)z$ for some real-valued function
  $\alpha:\mathbb{R}\to [0,\infty)$. We say that a radial function $f$
  is \emph{monotone} if $|z_0| \leq |z_1| \implies |f(z_0)|\leq
  |f(z_1)|$.
\end{defn}

\begin{lem}
  \label{lem:The calculation}
Let $h:\mathbb{R}^{2n}\to\mathbb{R}$ be
  the function $h(x)=\left(1+\frac{\lambda^{2}}{|x|^{2}}\right)^{1/2}$ and $\omega_0$ be the standard symplectic form on $\mathbb{R}^{2n}$. Let
  $H:\mathbb{R}^{2n}\backslash\{0\}\to\mathbb{R}^{2n}\backslash
  B(\lambda)$ be the mapping given by $H(x)=h(x)x$. Then
  $\pi^{*}H^{*}\omega=\rho(1,\lambda)$ on
  $\mathcal{L}\backslash\{(0,l)|l\in\mathbb{C}P^{n-1}\}$.

%%% Local Variables: 
%%% mode: plain-tex
%%% TeX-master: "../Antonio Rieser Thesis"
%%% End: 
\end{lem}

\begin{lem}
  \label{lem:Omega compatible i K\"ahler}
Let $(M,\omega)$ be a
  symplectic manifold. Then $\omega$ is a K\"ahler form iff $\omega$ is
  compatible with an integrable almost complex structure $J$.

%%% Local Variables: 
%%% mode: plain-tex
%%% TeX-master: "../Antonio Rieser Thesis"
%%% End: 

\end{lem}

\begin{lem}
  \label{lem:Pull back of radial function is K\"ahler}
Let $\omega$ be a
  K\"ahler form on $\mathbb{C}^{n},$ and suppose
  $f:\mathbb{C}^{n}\backslash\{0\}\to\mathbb{C}^{n}\backslash\{0\}$ is
  a monotone radial diffeomorphism. Then $f^{*}\omega$ is a K\"ahler
  form.

%%% Local Variables: 
%%% mode: plain-tex
%%% TeX-master: "../Antonio Rieser Thesis"
%%% End: \end{lem}

We now prove the following proposition, which we follow with the proof
of Proposition \ref{prop:Blow-up local model}.

\begin{prop}
  \label{prop:Rho(k,l) is a symplectic form on L}For each
  $\kappa,\lambda>0$, $\rho(\kappa,\lambda)$ is a symplectic form on
  $\mathcal{L}$.
\end{prop}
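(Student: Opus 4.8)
The plan is to check that $\rho(\kappa,\lambda)$ is closed and non-degenerate. Closedness is immediate, since $\pi^{*}\omega_{0}$ and $\theta^{*}\sigma$ are pullbacks of the closed forms $\omega_{0}$ and $\sigma$, so that $d\rho(\kappa,\lambda)=\kappa^{2}\pi^{*}d\omega_{0}+\lambda^{2}\theta^{*}d\sigma=0$. For non-degeneracy I would first reduce to a normalized case: since $\rho(\kappa,\lambda)=\kappa^{2}\rho(1,\lambda/\kappa)$, it suffices to prove that $\rho(1,\nu)$ is non-degenerate for every $\nu>0$. I would then treat the exceptional divisor $\mathcal{L}(0)=\pi^{-1}(0)$ and its complement separately, since the blow-down map $\pi$ behaves very differently on the two regions.

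On $\mathcal{L}\setminus\mathcal{L}(0)$ the map $\pi$ restricts to a diffeomorphism onto $\mathbb{C}^{n}\setminus\{0\}$. Let $H_{\nu}\colon\mathbb{C}^{n}\setminus\{0\}\to\mathbb{C}^{n}$ be the radial map $H_{\nu}(x)=(1+\nu^{2}/|x|^{2})^{1/2}x$; since $|H_{\nu}(x)|=(|x|^{2}+\nu^{2})^{1/2}$ is strictly increasing in $|x|$ and $H_{\nu}$ preserves rays, $H_{\nu}$ is a diffeomorphism onto $\{\,y:|y|>\nu\,\}$, so $H_{\nu}^{*}\omega_{0}$ is symplectic. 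By Lemma \ref{lem:The calculation}, $\rho(1,\nu)=\pi^{*}H_{\nu}^{*}\omega_{0}$ on $\mathcal{L}\setminus\mathcal{L}(0)$, and as $\pi$ is a diffeomorphism there, $\rho(1,\nu)$ is non-degenerate on $\mathcal{L}\setminus\mathcal{L}(0)$.

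The one genuine computation is non-degeneracy along $\mathcal{L}(0)$, where $\pi$ collapses the fibre directions of the tautological bundle and the lemma no longer applies. At a point $(0,l)\in\mathcal{L}(0)$ I would use the splitting $T_{(0,l)}\mathcal{L}=F\oplus T_{(0,l)}\mathcal{L}(0)$, where $F=\ker\theta_{*}$ is the tangent line to the fibre $\theta^{-1}(l)=\{(z,l):z\in l\}$. This is a direct sum: $\theta$ restricts to a diffeomorphism of $\mathcal{L}(0)$ onto $\mathbb{C}P^{n-1}$, so $T_{(0,l)}\mathcal{L}(0)\cap F=0$, and the real dimensions $2$ and $2(n-1)$ add to $2n=\dim\mathcal{L}$. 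Since $\pi$ is constant on $\mathcal{L}(0)$, the differential $\pi_{*}$ vanishes on $T_{(0,l)}\mathcal{L}(0)$, while $\pi$ restricted to $\theta^{-1}(l)$ is the inclusion $l\hookrightarrow\mathbb{C}^{n}$, so $\pi_{*}$ maps $F$ isomorphically onto the complex line $l$; dually $\theta_{*}$ vanishes on $F$ and identifies $T_{(0,l)}\mathcal{L}(0)$ with $T_{l}\mathbb{C}P^{n-1}$. Hence the cross terms of both $\pi^{*}\omega_{0}$ and $\theta^{*}\sigma$ between $F$ and $T_{(0,l)}\mathcal{L}(0)$ vanish, so $\rho(1,\nu)$ is block-diagonal for this splitting. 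On $F$ it is the pullback of $\omega_{0}|_{l}$, which is non-degenerate because $\omega_{0}$ restricts non-degenerately to every complex line, and on $T_{(0,l)}\mathcal{L}(0)$ it is $\nu^{2}$ times the pullback of $\sigma$, which is non-degenerate. Therefore $\rho(1,\nu)$ is non-degenerate at $(0,l)$ as well. Combining the two regions and undoing the scaling then shows $\rho(\kappa,\lambda)$ is symplectic on $\mathcal{L}$. I expect the block-diagonal argument along $\mathcal{L}(0)$ to be the only real obstacle; everything else is routine.
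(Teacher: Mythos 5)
Your proof is correct and follows essentially the same route as the paper: closedness is immediate, non-degeneracy away from $\mathcal{L}(0)$ comes from Lemma \ref{lem:The calculation} together with the fact that $H\circ\pi$ is a diffeomorphism there, and non-degeneracy along $\mathcal{L}(0)$ is checked via the splitting of $T_{(0,l)}\mathcal{L}$ into the fibre direction and the tangent space to the exceptional divisor. If anything, your version is slightly more complete at the exceptional divisor, since you verify explicitly that the cross terms between the two summands vanish, whereas the paper only evaluates $\rho(v,iv)$ on each summand separately and leaves that vanishing implicit.
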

\begin{proof}
  Let $\Omega=\omega_{0}^{n}$ denote the volume form on
  $\mathbb{R}^{2n}$, and let $H$ be defined as in the proof of Lemma
  \ref{lem:The calculation}.  Since $H\circ\pi$ is a diffeomorphism on
  $\mathcal{L}^{*}:=\mathcal{L}\backslash\{(0,z)|z\in\mathbb{C}P^{n-1}\}$,
  $\pi^{*}H^{*}\Omega$ is a volume form on $\mathcal{L}$, and
  therefore $\rho(1,\lambda)$ is non-degenerate for any
  $\lambda>0$. Since
  $\rho(\kappa,\lambda)=\kappa^{2}\rho(1,\lambda/\kappa)$, this
  implies that $\rho(\kappa,\lambda)$ is non-degenerate for
  $\kappa,\lambda>0$ as well. Since both $\omega_{0}$ and $\sigma$ are
  closed, $\rho(\kappa,\lambda)$ is closed as well on $\mathcal{L}^*$.

  Now let $(0,l)\in \mathcal{L}(0)$. Then $T_{(0,l)}\mathcal{L}\equiv
  T_l\mathbb{C}P^1\oplus T_0\mathbb{C}$. Taking $v\in
  T_l\mathbb{C}P^1$. Then
  $\rho(\kappa,\lambda)(v,iv)=\lambda^2\theta^*\sigma(v,iv)=\sigma(v,iv)>0$. Similarly,
  for $v\in T_0\mathbb{C}$,
  $\rho(\kappa,\lambda)(v,iv)=\pi^*\omega_0(v,iv)>0$, and therefore
  $\rho(\kappa,\lambda)$ is non-degenerate on $\mathcal{L}(0)$. Since
  $\rho(\kappa,\lambda)$ is closed as well, the form is symplectic as
  desired.\end{proof}

\begin{proof}[Proof of Proposition \ref{prop:Blow-up local model}]
  For each $\lambda>0$, let
  $h_{\lambda}:\mathbb{\mathbb{R}}^{2n}\backslash \{0\}\to\mathbb{R}$ be
  given by $h_{\lambda}(x)=\left(1+\frac{\lambda^{2}}{|x|^{2}}\right)^{1/2}$, and
  let $\delta,\epsilon, \epsilon_0 > 0$ satisfy
$(\delta+\epsilon_0)^{2}<\lambda^{2}\epsilon/2$. For
  $x\in B(\delta+\epsilon_0)$, we therefore have
  $|h_{\lambda}(x)x|^{2}=|x|^{2}+\lambda^{2} \leq (\delta + \epsilon_0)^2 +
\lambda^{2} < \lambda^{2}(\epsilon/2+1)$.
 Let $\beta(t):\R \to \R$ be a smooth non-increasing function which is $1$ for $t\leq \delta$ and $0$ for $t\geq \delta + \epsilon_0$, and let $\gamma(t):\R \to \R$ be a smooth non-increasing function which is $1$ for $t\leq (1+\epsilon)^{1/2}$ and $0$ for $t\geq 1+\epsilon$. Now define $F:\mathbb{R}^{2n}\backslash\{0\}\to\mathbb{R}^{2n}$ by
\begin{equation*}
  F(x)=\begin{cases}
    h_{\lambda}(x)x, & |x|<\delta\\
    \beta(|x|)h_{\lambda}(x)x + (1-\beta(|x|)\lambda
\left(1+\frac{\epsilon}{2}\right)^{\frac{1}{2}}\frac{x}{|x|}, & \delta \leq |x|
\leq \left(1 + \epsilon\right)^{\frac{1}{2}} \\
    \gamma(|x|)\lambda
\left(1+\frac{\epsilon}{2}\right)^{\frac{1}{2}}\frac{x}{|x|} +
(1-\gamma(|x|))\lambda x, &   \left(1 + \epsilon\right)^{\frac{1}{2}} < |x| <
1+\epsilon\\ 
    \lambda x, & 1+\epsilon\leq|x|\end{cases}
\end{equation*} 

\begin{lem}
\label{lem:F monotone}
The function $F$ defined above is a monotone radial diffeomorphism.
\end{lem}
\begin{proof}
We first note that $F$ is radial by definition. Furthermore, since the function is a diffeomorphism on each region, and since $\beta$ and $\gamma$ are smooth and all of their derivatives vanish on the boundary of each region, it follows that $F$ is continuous and all of the derivates of $F$ are well-defined for all $t$, and therefore $F$ is a diffeomorphism.
We now show that $F$ is monotone. For $|x| < \delta$, we have that $|F(x)|^2 = |x|^2 + \lambda^2$, so $F$ is monotone on this region. For $\delta \leq |x| \leq (1+\epsilon)^{1/2}$, we have 
\begin{align*}
|F(x)| &= \left|\beta(|x|)h_{\lambda}(x)x + (1-\beta(|x|)\lambda
\left(1+\frac{\epsilon}{2}\right)^{1/2}\frac{x}{|x|}\right|\\
&=  \left|\beta(|x|)h_{\lambda}(x) + (1-\beta(|x|)\lambda
\left(1+\frac{\epsilon}{2}\right)^{1/2}\frac{1}{|x|}\right||x|\\ 
&= \beta(|x|)(|x|^2+\lambda^2)^{1/2} + (1-\beta(|x|)\lambda
\left(1+\frac{\epsilon}{2}\right)^{1/2}.
\end{align*}
Here, the last equality follows since each term in the coefficient of $|x|$ is
always a positive real number. Setting $t:=|x|$ and $G(t):=|F(x)|$, we now
compute 
\begin{align}
\frac{dG}{dt} &= \beta(t)\frac{t}{(t^2+\lambda^2)^{1/2}} +
(t^2+\lambda^2)^{1/2}\beta^{'}(t) -\lambda
\left(1+\frac{\epsilon}{2}\right)^{1/2} \beta^{'}(t)\\
&=  \beta(t)\frac{t}{(t^2+\lambda^2)^{1/2}} + \left((t^2+\lambda^2)^{1/2}
-\lambda \left(1+\frac{\epsilon}{2} \right)^{1/2}\right) \beta^{'}(t).
\label{eqn:monotonicity}
\end{align}
The first term in the last equality is positive for all $t>0$. For $t\geq \delta+\epsilon_0$, $\beta^{'}(t)=0$, and for $\delta < t < \delta + \epsilon_0$, $\beta^{'}(t) \leq 0$. On this region, we also have that
\begin{align*}
t^2 + \lambda^2 < &(\delta + \epsilon_0)^2 + \lambda^2\ \\
 < & \lambda^2\epsilon /2 + \lambda^2\\
 = & \lambda^2\left(1 + \frac{\epsilon}{2}\right). 
\end{align*} 
Taking square roots of both sides of the inequality, we see that the 
coefficient to $\beta{̈́'}(t)$ in Equation \ref{eqn:monotonicity} is negative.
Therefore, $\frac{dG}{dt}$ is non-negative for $t \in
(\delta,(1+\epsilon)^{1/2})$, and it follows that $|F(x)|$ is non-decreasing
there.

Similarly, for $|x| \in ((1+\epsilon)^{1/2},1+\epsilon)$, we have
\begin{equation*}
|F(x)| = \gamma(|x|)\lambda \left(1+\frac{\epsilon}{2}\right)^{1/2} + (1-\gamma(|x|))\lambda |x|
\end{equation*}
Setting $t:=|x|$, and $G(t):= |F(x)|$, we compute 
\begin{equation*}
\frac{dG}{dt} = \left(\lambda \left(1+\frac{\epsilon}{2}\right)^{1/2}-\lambda t\right) \gamma^{'}(t) + (1-\gamma(|x|))\lambda.
\end{equation*} 
The last term is positive, and, since $\gamma^{'}(t) \leq 0$ and $t > (1+\frac{\epsilon}{2})^{1/2}$, the first term is positive as well. Therefore $G(t)$ is non-decreasing, and $|F(x)|$ is monotone on this interval. For $|x| \in [1+\epsilon,\infty)$, $|F(x)|$ is clearly monotone. This completes the proof.
\end{proof}

We now return to the proof of Proposition \ref{prop:Blow-up local model}.
Define $\tilde{\tau}(\epsilon,\lambda)$ by 
\begin{equation*}
\tilde{\tau}(\epsilon,\lambda) := \pi^{*}F^{*}\omega_{0} 
\end{equation*}
on $\mathcal{L}\backslash \mathcal{L}(0)$.
By Lemma \ref{lem:The
    calculation},
  $\pi^*F^*\omega_0=\rho(1,\lambda)=\tilde{\tau}(\epsilon,\lambda)$ on
  $\mathcal{L}(\delta)\backslash \mathcal{L}(0)$.  Since
  $\rho(1,\lambda)$ is a symplectic form on all of
  $\mathcal{L}(\delta)$, we may extend
  $\tilde{\tau}(\epsilon,\lambda)$ to all of $\mathcal{L}$ by
  assigning $\tilde{\tau}(\epsilon,\lambda):=\rho(1,\lambda)$ on
  $\mathcal{L}(0)=\pi^{-1}(0)$.  Now note that this form satisfies condition
  \ref{enu:Pull-back of standard form} and \ref{enu:Local model is locally
    rho} in the proposition by Lemma \ref{lem:The calculation} and the
definition of $F$.

  To see that $\tilde{\tau}(\epsilon,\lambda)$ is symplectic, we note
  that on $\mathcal{L}\backslash \mathcal{L}(0)$,
$\tilde{\tau}(\epsilon,\delta)$ is a
  pullback of the symplectic form $\omega_0$ by the diffeomorphism $F$, and on a
neighborhood of $\mathcal{L}(0)$, $\tau(\epsilon,\lambda)$ equals the symplectic
form $\rho(1,\lambda)$.

Items \ref{enu:Bar-tau-is-compatible}, \ref{enu:Symmetry in local blow up form},
and \ref{enu:R(r) is Lagrangian for bar(tau)} follow from a routine calculation.

\end{proof}

In the next proposition, we construct the global relative blow-up of a
manifold $M$ using a relative symplectic and holomorphic embedding of the ball
$(B(1+2\epsilon),\lambda^2\omega_0,B_{\mathbb{R}}(1+2\epsilon))$ with
the standard complex structure $i$. The use of holomorphic embeddings
here gives us extra control over the complex structure in the blow-up,
which we will be useful in our applications.

\begin{prop}
  \label{prop:Holomorphic blow-up} Let $(M,\omega)$ be a symplectic
  manifold with Lagrangian $L$, and let $J$ be an $\omega$-tame
  (compatible) almost complex structure.  Suppose that for $\lambda>0$ and some small
  $\epsilon > 0$, there is a relative
  symplectic and holomorphic embedding
  \begin{equation*}
\psi:\coprod_{j=1}^{k}(B_{j}(1+2\epsilon),\lambda_j^2\omega_{0},B_{\mathbb{R},j}(1+2\epsilon),i)\hookrightarrow(M,\omega,L,J).
\end{equation*} 

Then there exists a symplectic manifold
  $(\tilde{M},\tilde{\omega})$ with Lagrangian $\tilde{L}\subset \tilde{M}$,
  an $\tilde{\omega}$-tame (compatible) almost complex structure
  $\tilde{J}$, and an onto map
  $\Pi:\tilde{M}\to M$ such that

\begin{enumerate}
\item $\Pi$ is a diffeomorphism on $\Pi^{-1}(M\backslash \cup_{j=1}^k
\psi_j(0))$,
\item $\Pi_* \tilde{J} = J \Pi_*$
\item For all $j\in \{1,\dots,k\}, \Pi^{-1}(\psi_j(0))\cong \mathbb{C}P^{n-1}$,
\item $\Pi(\tilde{L})=L$, and
\item $\tilde{\omega}$ is in the cohomology class
\begin{equation*}
[\tilde{\omega}]=[\Pi^*\omega]+\sum_{j=1}^k \lambda_j^{2}e_{j},
\end{equation*}
where the $e_j$ are the Poincar\'e duals of the exceptional classes
\begin{equation*}
E_j=[\Pi^{-1}(\psi_j(0))].
\end{equation*}
\end{enumerate}
\end{prop}

\begin{rem}
  Note that the $E_i$ in the theorem above are the classes represented
  by the exceptional curves added in the blow-up.
\end{rem}

\begin{proof} 
  First, we consider the case when $k=1$. Consider the map
  $\pi:(\mathcal{L}(1+2\epsilon),\mathcal{R}(1+2\epsilon),\tilde{i})\to(B(1+2\epsilon),B_{\mathbb{R}}(1+2\epsilon),i)$ from Definition \ref{defn:Notation},
  where $\tilde{i}$ and $i$ are the standard complex structures on $\mathcal{L}$ and $\mathbb{C}^n$, respectively.
  Observing that $\pi$ gives a diffeomorphism between the boundaries
  $(\partial B(1+2\epsilon),\partial B_{\mathbb{R}}(1+2\epsilon))$ and
  $(\partial\mathcal{L}(1+2\epsilon),\partial\mathcal{R}(1+2\epsilon))$,
  we let $\pi_{\partial}$ denote the restriction of $\pi$ to
  $\partial\mathcal{L}(1+2\epsilon)$, and we define $\tilde{M}$ to be
  $\tilde{M}:=M\backslash \psi((B(1+2\epsilon),B_{\mathbb{R}}(1+2\epsilon))\cup_{\psi\circ\pi_{\partial}}(\mathcal{L}(1+2\epsilon),R(1+2\epsilon))$.
  This operation is summarized in the diagram below, with $\delta=1+2\epsilon$.\begin{equation}
    \xymatrix{(\mathcal{L}(\delta),\mathcal{R}(\delta))\ar[d]_{\pi} \ar@{^{(}->}^{\tilde{\psi}}[r] & (\tilde{M},\tilde{L})\ar[d]^{\Pi}\\
      (B(\delta),B_{\mathbb{R}}(\delta))\ar@{^{(}->}_{\psi}[r]
      & (M,L)}
    \label{eq:Blow-up/down diagram}\end{equation}
  where $\psi$ and $\tilde{\psi}$ are embeddings, and where the map
  $\Pi:(\tilde{M},\tilde{L})\to(M,L)$ is defined by \[
  \Pi(x)=\begin{cases}
    x, & x\notin\mbox{Im }\tilde{\psi}\\
    \psi\circ\pi\circ\tilde{\psi}^{-1}(x) & x\in\mbox{Im
    }\tilde{\psi}\end{cases}\] making the diagram commutative. Note
  that only $\psi$ is a symplectomorphism a
  priori.

  We now define a symplectic form on $\tilde{M}$. Recall that
  $\psi^{*}\omega=\lambda^{2}\omega_{0}$ by hypothesis.  We assign a
  symplectic form to $\tilde{M}$ by:\begin{equation}\label{eq:Def
      blow-up form} \tilde{\omega}=\begin{cases}
      \Pi^{*}\omega & \mbox{on } \tilde{M}\backslash\tilde{\psi}(\mathcal{L}(1+\epsilon))\\
      (\tilde{\psi}^{-1})^{*}\tilde{\tau}(\epsilon,\lambda) & \mbox{on
      } \tilde{\psi}(\mathcal{L}(1+2\epsilon))\end{cases}
  \end{equation}
  We check that $\tilde{\omega}$ is well-defined on $\mathcal{L}(1+2\epsilon)-\mathcal{L}(1+\epsilon)$. By Proposition \ref{prop:Blow-up local model} and the definition of
  $\tilde{\omega}$ and $\Pi$, on
  $\mathcal{L}(1+2\epsilon)-\mathcal{L}(1+\epsilon)$ we
  have\begin{align*}
    \Pi^{*}\omega = & (\tilde{\psi}^{-1})^{*}\pi^{*}\psi^{*}\omega\\
    = & \lambda^2(\tilde{\psi}^{-1})^{*}\pi^{*}\omega_0 =
    (\tilde{\psi}^{-1})^{*}\tilde{\tau}(\epsilon,\lambda),\end{align*}
  so $\tilde{\omega}$ is well defined.

  We define the almost complex structure $\tilde{J}$ on $\tilde{M}$ by 
  \begin{equation*}
  \tilde{J}=\begin{cases}\tilde{\psi}_*\tilde{i}\tilde{\psi}^{-1}_* & \text{ on } Im(\tilde{\psi})\\
            \Pi^{-1}_*J\Pi_*                         & \text{ on } \tilde{M}\backslash Im(\tilde{\psi})
\end{cases} 
  \end{equation*}
Note that since $\pi$ and
$\psi$ are holomorphic diffeomorphisms near the boundary of their respective
domains, $\Pi^{-1}_*J\Pi_*=\tilde{\psi}_*\tilde{i}\tilde{\psi}^{-1}_*$ on
$\tilde{\psi}(1+2\epsilon)\backslash \tilde{\psi}(1+\epsilon)$, and so
$\tilde{J}$ is well defined.
  To see that $\tilde{\omega}$ tames (is compatible with) $\tilde{J}$,
  we first note that $\Pi$ is holomorphic for $x\in
  \tilde{M}-\mathcal{L}(1+\epsilon)$, and we recall that
  $\tilde{\omega}=\Pi^{*}\omega$ on this region. Therefore, if
  $\omega$ tames $J$, then for $v,w\in T_{x}M$,
  $\tilde{\omega}(v,\tilde{J}v)=\lambda^{2}\omega(\Pi_{*}v,\Pi_{*}\tilde{J}v)=\lambda^{2}\omega(\Pi_{*}v,J\Pi_{*}v)>0$,
  so $\tilde{\omega}$ tames $\tilde{J}$ on this region. If, in
  addition, $\omega$ is compatible with $J$, we have,
  \begin{align*}
    \tilde{\omega}(\tilde{J}v,\tilde{J}w) = &
\Pi^*\omega(\tilde{J}v,\tilde{J}w)\\
    = & \omega(\Pi_*\tilde{J}v,\Pi_*\tilde{J}w)\\
    = & \omega(J\Pi_* v,J\Pi_* w)\\
    = & \omega(\Pi_* v,\Pi_* w) = \Pi^*\omega(v,w)
  \end{align*} as desired.
 
  For $x\in\mathcal{L}(1+\epsilon)$, we have that
  $\tilde{\omega}=(\tilde{\psi}^{-1})^*\tilde{\tau}$. Since
  $\tilde{\tau}$ is compatible with $\tilde{i}$, the canonical complex
  structure on $\mathcal{L}$, and $\tilde{\psi}$ is holomorphic, then
  $\tilde{\omega}$ is compatible with $\tilde{J}$ on this
  region. Therefore, if $\omega$ tames (is compatible with) $J$ on
  $M$, then $\tilde{\omega}$ tames (is compatible with) $\tilde{J}$ on
  all of $\tilde{M}$.

  Blowing up more than one point is done as above for each ball in the
  disjoint product
  $\psi:\coprod_{j=1}^{k}(B_{j}(r),\omega_{0},B_{\mathbb{R},j}(r))\hookrightarrow(M,\omega,L)$. That $\tilde{\omega}$ is in
  the desired cohomology class follows immediately from this
  construction.
\end{proof}

\begin{rem}
  When we want to emphasize the embedding $\psi$, we will refer to the
  symplectic blow up constructed as above as the blow-up of $M$
  \emph{relative to $\psi$}.
\end{rem}

In the following proposition we construct a real structure on the
blow-up $\tilde{M}$ given a real symplectic manifold $M$ and a
suitably symmetric embedding $\psi$ of a disjoint union of balls into
$M$.

\begin{prop}
  \label{prop:Real blow-up}
  Let $(M,\omega,\phi)$ be a real symplectic manifold, let $J$ be an $\omega$-tame (compatible)
  almost complex structure on $M$ which satisfies $\phi_* J
  \phi_*=-J$, and let
\begin{equation*}
\psi:\coprod_{j=1}^k
  (B_j(1+2\epsilon),\lambda_j^2\omega_0,i) \hookrightarrow (M,\omega,J)
\end{equation*}
  be a symplectic and holomorphic embedding. Suppose $\phi$ and $\psi$ satisfy
\begin{enumerate}
 \item $Im(\phi\circ \psi)=Im(\psi)$,
 \item $Im(\phi\circ \psi_j)\cap Im(\psi_j)= \emptyset$ if $Im(\psi_j)\cap L=\emptyset$, and 
 \item $\psi_j \circ c=\phi \circ
    \psi_j$ if $Im(\psi_j)\cap L \neq \emptyset$. 
\end{enumerate}

Then there exists a real symplectic manifold $(\tilde{M},\tilde{\omega},\tilde{\phi})$ and an onto map $\Pi:\tilde{M}\to M$ which satisfies 
\begin{enumerate}
\item $\Pi$ is a diffeomorphism on $\Pi^{-1}(M\backslash \cup_j \psi_j(0))$,
\item $\Pi^{-1}(\psi_j(0))\cong \mathbb{C}P^{n-1}$,
\item $\Pi \circ \tilde{\phi}=\phi \circ \Pi$, and
\item $\tilde{\omega}$ is in the cohomology class
\begin{equation*}
[\tilde{\omega}]=[\Pi^*\omega]-\sum_{j=1}^k \lambda_j^{2}e_{j},
\end{equation*}
where the $e_j$ are the Poincar\'e duals of the exceptional classes
\begin{equation*}
E_j = [\Pi^{-1}(\psi_j(0))] \in H_2(\tilde{M};\mathbb{Z}).
\end{equation*}
\end{enumerate}

Furthermore, the real structure $\tilde{\phi}$ and the almost complex structure $\tilde{J}$ in the blow-up $\tilde{M}$ satisfy $\tilde{\phi}_*
  \tilde{J}=-\tilde{J} \tilde{\phi_*}$, and for every $j$ with $\psi_j \circ c=\phi \circ \psi_j$, we have $\phi_* E_j =-E_j\in H_2(\tilde{M};\mathbb{Z})$.
\end{prop}
\begin{rem}
As we will see in the proof, in the case where there are balls which are embedded off of the Lagrangian, the blow-up is not constructed relative to $\psi$, but relative to another symplectic, holomorphic embedding with the same image. The ball embeddings whose image intersects the Lagrangian are left untouched, and those which take pairs of balls to $M\backslash L$ are changed to commute with $\phi$ and the standard real structure on $\mathbb{R}^{2n}$.
\end{rem}
In order to prove this proposition, we use the following lemmas. In the first lemma, we construct the blow-up given a real embedding $\psi$ on one ball such that $\psi\circ c=\phi \circ \psi$. In the second, we construct the simultaneous blow-up of an embedding $\psi$ of two balls $B_1$ and $B_2$ such that $\phi \circ \psi(B_1)=\psi(B_2)$.

\begin{lem}
  \label{lem:Real blow-up one ball}
  Let $(M,\omega,\phi)$ be a real symplectic manifold, let $J$ be an $\omega$-tame (compatible)
  almost complex structure on $M$ which satisfies $\phi_* J
  \phi_*=-J$. Suppose
\begin{equation*}
\psi:
  (B(1+2\epsilon),\lambda^2\omega_0,i) \hookrightarrow (M,\omega,J)
\end{equation*}
  is a symplectic and holomorphic embedding such that
  $\psi \circ c=\phi \circ \psi$. Then there exists a symplectic manifold $(\tilde{M},\tilde{\omega})$ that admits an anti-symplectic
  involution $\tilde{\phi}$ such that $\Pi$ and $\tilde{\omega}$ satisfy the conclusions of Proposition $\ref{prop:Holomorphic blow-up}$.

  Furthermore, the real
  structure $\tilde{\phi}$ in the blow-up $\tilde{M}$ satisfies $\tilde{\phi}_*
  \tilde{J}=-\tilde{J} \tilde{\phi_*}$, and 
\begin{equation*}
\tilde{\phi}_* [\Pi^{-1}(\psi(0))]=-[\Pi^{-1}(\psi(0))]\in
H_2(\tilde{M};\mathbb{Z}).
\end{equation*}
\end{lem}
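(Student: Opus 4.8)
The plan is to run the construction of Proposition \ref{prop:Holomorphic blow-up} with $k=1$ verbatim, and then build $\tilde\phi$ by gluing together the two natural anti-involutions on the two pieces out of which $\tilde M$ is assembled, namely $\phi$ on $M\setminus\psi(B(1+2\epsilon))$ and $\tilde c$ on the glued-in copy of $\mathcal L(1+2\epsilon)$. First I would record the set-up: form $\tilde M = \bigl(M\setminus\psi(B(1+2\epsilon))\bigr)\cup_{\psi\circ\pi_\partial}\mathcal L(1+2\epsilon)$, with the blow-down map $\Pi$, the form $\tilde\omega$, and the tame (compatible) almost complex structure $\tilde J$, exactly as in that proposition; all the conclusions about $\Pi$ and $\tilde\omega$ are then immediate from it, so nothing new is needed there.

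Next I would define $\tilde\phi:\tilde M\to\tilde M$ by
\begin{equation*}
\tilde\phi(x)=\begin{cases}\phi(x), & x\notin\operatorname{Im}\tilde\psi,\\ \tilde\psi\circ\tilde c\circ\tilde\psi^{-1}(x), & x\in\operatorname{Im}\tilde\psi,\end{cases}
\end{equation*}
and check that the two formulas agree on the overlap $\tilde\psi\bigl(\mathcal L(1+2\epsilon)\setminus\mathcal L(1+\epsilon)\bigr)$. This is where the hypothesis $\psi\circ c=\phi\circ\psi$ is used: on the collar, $\tilde\psi = \psi\circ\pi$ (via the identification used in the gluing), $\pi$ intertwines $\tilde c$ and $c$ by Definition \ref{defn:Notation}, and $\psi$ intertwines $c$ and $\phi$ by hypothesis, so $\phi\circ\psi\circ\pi = \psi\circ c\circ\pi = \psi\circ\pi\circ\tilde c$, giving agreement; hence $\tilde\phi$ is a well-defined smooth map. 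It is an involution because both $\phi$ and $\tilde c$ are. That it is anti-symplectic follows piecewise: on $\tilde M\setminus\tilde\psi(\mathcal L(1+\epsilon))$ we have $\tilde\omega=\Pi^*\omega$, $\Pi\circ\tilde\phi=\phi\circ\Pi$ (immediate from the two definitions and the intertwining just checked), so $\tilde\phi^*\tilde\omega=\tilde\phi^*\Pi^*\omega=\Pi^*\phi^*\omega=-\Pi^*\omega=-\tilde\omega$; and on $\tilde\psi(\mathcal L(1+2\epsilon))$ we have $\tilde\omega=(\tilde\psi^{-1})^*\tilde\tau(\epsilon,\lambda)$ and $\tilde\phi=\tilde\psi\circ\tilde c\circ\tilde\psi^{-1}$, so $\tilde\phi^*\tilde\omega=(\tilde\psi^{-1})^*\tilde c^*\tilde\tau(\epsilon,\lambda)=-(\tilde\psi^{-1})^*\tilde\tau(\epsilon,\lambda)=-\tilde\omega$ by Proposition \ref{prop:Blow-up local model}\eqref{enu:Symmetry in local blow up form}. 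The relation $\Pi\circ\tilde\phi=\phi\circ\Pi$ over the exceptional region follows from $\pi\circ\tilde c=c\circ\pi$ together with $\psi\circ c=\phi\circ\psi$.

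For the statement $\tilde\phi_*\tilde J=-\tilde J\tilde\phi_*$, I would again argue locally: on $\operatorname{Im}\tilde\psi$, $\tilde J=\tilde\psi_*\tilde i\tilde\psi_*^{-1}$ and $\tilde\phi=\tilde\psi\circ\tilde c\circ\tilde\psi^{-1}$, and $\tilde c_*\tilde i=-\tilde i\tilde c_*$ since $\tilde c$ is complex conjugation on $\mathcal L$ (restriction of conjugation on $\mathbb C^n\times\mathbb C P^{n-1}$, which anti-commutes with $i$); off $\operatorname{Im}\tilde\psi$, $\tilde J=\Pi_*^{-1}J\Pi_*$ and $\Pi\circ\tilde\phi=\phi\circ\Pi$, so $\tilde\phi_*\tilde J=\Pi_*^{-1}\phi_* J\Pi_* = -\Pi_*^{-1}J\phi_*\Pi_* = -\tilde J\tilde\phi_*$ using the hypothesis $\phi_* J\phi_*=-J$, i.e. $\phi_* J=-J\phi_*$. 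Finally, for $\tilde\phi_*E=-E$ with $E=[\Pi^{-1}(\psi(0))]=[\mathcal L(0)]=[\mathbb C P^{n-1}]$: the exceptional fiber $\mathcal L(0)$ is $\tilde c$-invariant (it is the zero section, i.e. $\{0\}\times\mathbb C P^{n-1}$, carried to itself by conjugation), and $\tilde c$ restricted to it is complex conjugation $\bar c$ on $\mathbb C P^{n-1}$, which is orientation-reversing on $\mathbb C P^{n-1}$ when $n-1$ is odd but in general reverses the orientation as a complex manifold's fundamental class; more robustly, $\tilde c^*$ acts as $-1$ on $H^2$ of the exceptional divisor by the corollary computing $\tilde c^*\rho(\kappa,\lambda)=-\rho(\kappa,\lambda)$ (restricting to $\mathcal L(0)$ gives $\tilde c^*(\lambda^2\theta^*\sigma)=-\lambda^2\theta^*\sigma$, and $\theta^*\sigma$ generates $H^2(\mathbb C P^{n-1})$), so on homology $\tilde\phi_*[\mathcal L(0)]=-[\mathcal L(0)]$, i.e. $\tilde\phi_*E=-E$ in $H_2(\tilde M;\mathbb Z)$.

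The main obstacle I anticipate is purely bookkeeping rather than conceptual: making the gluing identifications precise enough that "$\tilde\psi=\psi\circ\pi$ on the collar" is literally true, so that the intertwining relations among $\pi,\tilde c,c,\psi,\phi$ compose without sign or domain errors; once the collar compatibility $\phi\circ(\psi\circ\pi)=(\psi\circ\pi)\circ\tilde c$ is nailed down, everything else is a short piecewise verification, and the homological statement $\tilde\phi_*E=-E$ is the only place a genuinely new (though elementary) observation about the action of conjugation on the exceptional $\mathbb C P^{n-1}$ is required.
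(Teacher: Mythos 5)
Your proposal is correct and follows essentially the same route as the paper: the same piecewise definition of $\tilde{\phi}$ (via $\phi$ off the glued region and $\tilde{\psi}\circ\tilde{c}\circ\tilde{\psi}^{-1}$ on it), the same collar compatibility check using $\pi\circ\tilde{c}=c\circ\pi$ and $\psi\circ c=\phi\circ\psi$, the same piecewise verifications that $\tilde{\phi}$ is anti-symplectic and anti-commutes with $\tilde{J}$, and the same observation that $\tilde{c}$ preserves $\mathcal{L}(0)$ while reversing its orientation to get $\tilde{\phi}_*E=-E$. Your remark that $\tilde{c}^*$ acts as $-1$ on the generator $\theta^*\sigma$ is a slightly more explicit justification of that last step than the paper gives, but it is the same argument in substance.
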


\begin{proof}
  We first note that $\psi$ is a relative embedding, since
\begin{equation*}
\psi^{-1}(Fix(\phi))=Fix(c)=B_{\mathbb{R}}(1+2\epsilon).
\end{equation*}
  Now construct the blow-up $(\tilde{M},\tilde{\omega})$
  of $(M,\omega)$ relative to $\psi$ as in Proposition
  \ref{prop:Holomorphic blow-up}. Denote by $\tilde{c}$ the complex conjugation map on $\mathcal{L}$
  and recall that we have $\pi\circ\tilde{c}(z,l)=c\circ\pi(z,l)$,
  since $z\in l\Longleftrightarrow\overline{z}\in\overline{l}$ and $\overline{0}=0$. Given $\epsilon,\lambda > 0$, let $\tilde{\tau}(\epsilon,\lambda)$ be the symplectic form on
  $\mathcal{L}$ constructed in Proposition \ref{prop:Blow-up local
    model}, and recall that
    $\tilde{c}^{*}\tilde{\tau}(\epsilon,\lambda) = -\tilde{\tau}(\epsilon,\lambda)$.  We now define a map $\tilde{\phi}:\tilde{M}\to\tilde{M}$
  by
\begin{equation*} \tilde{\phi}(x)=\begin{cases}
    \Pi^{-1}\circ\phi\circ \Pi(x), & x\in\tilde{M}\backslash\tilde{\psi}(\mathcal{L}(1+\epsilon))\\
    \tilde{\psi}\circ\tilde{c}\circ \tilde{\psi}^{-1}(x), &
    x\in\tilde{\psi}(\mathcal{L}(1+2\epsilon)).\end{cases}
\end{equation*} 
By the commutativity of Figure
  \ref{eq:Blow-up/down diagram}, and the equivariance of $\psi$ we have, for $x\in \mathcal{L}(1+2\epsilon)\backslash \mathcal{L}(1+\epsilon),$ 
\begin{align*}
    \tilde{\psi}\circ\tilde{c}\circ\tilde{\psi}^{-1}(x) = &
\tilde{\psi}\circ\pi^{-1}\circ c\circ\pi\circ\tilde{\psi}^{-1}(x)\\
    = & \Pi^{-1}\circ\psi\circ c \circ\psi^{-1}\circ\Pi(x)\\
    = & \Pi^{-1}\circ\psi\circ\psi^{-1}\circ\phi\circ\Pi(x) =
\Pi^{-1}\circ\phi\circ\Pi(x).
\end{align*}
  Therefore $\tilde{\phi}$ is well-defined and a diffeomorphism. That
  $\tilde{\phi}$ is an anti-symplectic involution follows from the
  fact that $\Pi^{-1}\circ\phi\circ\Pi$ and
  $\tilde{\psi}\circ\tilde{c}\circ \tilde{\psi}^{-1}$ are
  anti-symplectic involutions on their respective domains.

  To see the last statement in the proposition, for $x\in \tilde{M}\backslash
\tilde{\psi}(\mathcal{L}(1+\epsilon))$, we compute
\begin{align*}
\tilde{\phi}_* \tilde{J} &= \Pi^{-1}_* \phi_* \Pi_*\tilde{J}\\
&= - \Pi^{-1}_*J\phi_*\Pi_*\\
&= - \tilde{J}\Pi^{-1}_*\phi_* \Pi_* = -\tilde{J}\tilde{\phi}.
\end{align*}
  For $x\in \tilde{\psi}(\mathcal{L}(1+2\epsilon))$, we have
  \begin{align*}
    \tilde{\phi}_*\tilde{J} &=
 \tilde{\psi}_*\tilde{c}_*\tilde{\psi}^{-1}_*\tilde{J}\\
    &= -\tilde{\psi}_*\tilde{i}\tilde{c}_*\tilde{\psi}^{-1}\\
    &= -\tilde{J}\tilde{\psi}_*\tilde{c}_*\tilde{\psi}^{-1} = 
-\tilde{J}\tilde{\phi},
  \end{align*}
  as desired.

Let $E=\tilde{\psi}(\mathcal{L}(0))$. To see that $\tilde{\phi}_*E=-E$, we note that $\tilde{c}(\mathcal{L}(0))=\mathcal{L}(0)$, and that $\tilde{c}$ reverses orientation. This completes the proof.

\end{proof}

\begin{lem}
\label{lem:Real blow-up two balls}
Let $(M,\omega,\phi)$ be a real symplectic manifold, let $J$ be an $\omega$-tame (compatible) almost complex structure. Suppose
\begin{equation*}
\gamma:\coprod_{i=1}^2 (B_i(1+2\epsilon),\lambda^2\omega_0,i)\to (M,\omega,J)
\end{equation*}
is a symplectic and holomorphic embedding such that $Im(\phi\circ \gamma_1)=Im (\gamma_2)$.
Then there exists a real symplectic manifold $(\tilde{M},\tilde{\omega})$
  with real structure $\tilde{\phi}$, an $\tilde{\omega}$-tame (compatible) almost complex structure, and an onto map $\Pi:\tilde{M}\to M$ which satisfies the conclusions of Proposition \ref{prop:Holomorphic blow-up}.

  Furthermore, the real
  structure $\tilde{\phi}$ and the almost complex structure
  $\tilde{J}$ in the blow-up $\tilde{M}$ satisfy $\tilde{\phi}_*
  \tilde{J}=-\tilde{J} \tilde{\phi_*}$. 
\end{lem}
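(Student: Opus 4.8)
The plan is to reduce Lemma \ref{lem:Real blow-up two balls} to the single-ball construction of Proposition \ref{prop:Holomorphic blow-up} applied to the \emph{pair} of balls, and then define $\tilde\phi$ by transporting $\phi$ through the blow-down map $\Pi$ on the complement and by a suitable conjugation-swap on the glued-in copies of $\mathcal L(1+2\epsilon)$. Concretely: first observe that, since $Im(\phi\circ\gamma_1)=Im(\gamma_2)$ and $\phi^2=Id$, we also have $Im(\phi\circ\gamma_2)=Im(\gamma_1)$, so that $c_{12}:=\gamma_2^{-1}\circ\phi\circ\gamma_1:(B_1,\lambda^2\omega_0)\to(B_2,\lambda^2\omega_0)$ is an anti-symplectomorphism with $c_{12}^{-1}=\gamma_1^{-1}\circ\phi\circ\gamma_2=:c_{21}$. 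These two balls are disjoint (otherwise $\gamma$ would not be an embedding of the disjoint union, and anyway a point of $Im(\gamma_1)\cap Im(\gamma_2)$ fixed-swapped by $\phi$ would force an intersection with $L$, which is allowed, but the argument below does not need disjointness from $L$; it only needs that the two balls do not overlap, which is part of the embedding hypothesis). Now apply Proposition \ref{prop:Holomorphic blow-up} to $\gamma$, producing $(\tilde M,\tilde\omega)$, $\tilde J$, $\Pi$, and holomorphic embeddings $\tilde\gamma_i:(\mathcal L_i(1+2\epsilon),\rho,\tilde i)\hookrightarrow\tilde M$ covering $\gamma_i\circ\pi$; all conclusions of that proposition are then immediate, and the only thing left is the real structure $\tilde\phi$.

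Next I would define $\tilde\phi:\tilde M\to\tilde M$ by
\begin{equation*}
\tilde\phi(x)=\begin{cases}
\Pi^{-1}\circ\phi\circ\Pi(x), & x\notin \tilde\gamma_1(\mathcal L_1(1+\epsilon))\cup\tilde\gamma_2(\mathcal L_2(1+\epsilon)),\\
\tilde\gamma_2\circ\tilde c_{12}\circ\tilde\gamma_1^{-1}(x), & x\in\tilde\gamma_1(\mathcal L_1(1+2\epsilon)),\\
\tilde\gamma_1\circ\tilde c_{21}\circ\tilde\gamma_2^{-1}(x), & x\in\tilde\gamma_2(\mathcal L_2(1+2\epsilon)),
\end{cases}
\end{equation*}
where $\tilde c_{12}:\mathcal L_1\to\mathcal L_2$ is the lift of $c_{12}$ to the tautological bundles — it exists because $c_{12}$, being the composite of the holomorphic chart maps with $\phi$, is anti-holomorphic and sends $0$ to $0$, hence $z\in l\Leftrightarrow \overline{c_{12}(z)}\in\overline{(\text{corresponding line})}$ exactly as for $\tilde c$ in Lemma \ref{lem:Real blow-up one ball}; more carefully, $\tilde c_{12}$ should be taken as $\pi_2^{-1}\circ c_{12}\circ\pi_1$ on the complement of the zero sections, extended over the zero sections by the induced anti-holomorphic map $\mathbb{C}P^{n-1}\to\mathbb{C}P^{n-1}$. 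The map $\tilde\phi$ is an involution since $\tilde c_{21}=\tilde c_{12}^{-1}$ and $\phi^2=Id$, and on each glued region it is anti-symplectic because $\tilde c_{12}^{*}\tilde\tau(\epsilon,\lambda)=-\tilde\tau(\epsilon,\lambda)$ (the same computation as in the Corollary after Definition \ref{defn:Notation}, now for the off-diagonal conjugation, using that $c_{12}$ pulls $\lambda^2\omega_0$ back to $-\lambda^2\omega_0$ and that it intertwines the canonical projections), while on the complement it is anti-symplectic because $\Pi^{*}\omega$ is and $\phi^{*}\omega=-\omega$.

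The one genuine verification — and the main point of the lemma — is that the two definitions of $\tilde\phi$ agree on the overlap collar $\tilde\gamma_1(\mathcal L_1(1+2\epsilon)\setminus\mathcal L_1(1+\epsilon))$ (and symmetrically on the other collar). There, $\Pi=\psi\circ\pi\circ\tilde\gamma_1^{-1}$ — abusively $\psi=\gamma_1$ on $B_1$ — so for $x$ in that collar, exactly as in Lemma \ref{lem:Real blow-up one ball},
\begin{align*}
\tilde\gamma_2\circ\tilde c_{12}\circ\tilde\gamma_1^{-1}(x)
&= \tilde\gamma_2\circ\pi_2^{-1}\circ c_{12}\circ\pi_1\circ\tilde\gamma_1^{-1}(x)\\
&= \Pi^{-1}\circ\gamma_2\circ c_{12}\circ\gamma_1^{-1}\circ\Pi(x)\\
&= \Pi^{-1}\circ\gamma_2\circ\gamma_2^{-1}\circ\phi\circ\gamma_1\circ\gamma_1^{-1}\circ\Pi(x)
= \Pi^{-1}\circ\phi\circ\Pi(x),
\end{align*}
using the definition $c_{12}=\gamma_2^{-1}\circ\phi\circ\gamma_1$ and that $\Pi$ maps $Im(\tilde\gamma_1)$ onto $Im(\gamma_1)$ and $Im(\tilde\gamma_2)$ onto $Im(\gamma_2)$. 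This is where the hypothesis $Im(\phi\circ\gamma_1)=Im(\gamma_2)$ is used in an essential way, since it is what makes $\phi$ carry the first glued region to the second; without it the piecewise formula would not be continuous. Finally, $\tilde\phi_*\tilde J=-\tilde J\tilde\phi_*$ is checked exactly as in Lemma \ref{lem:Real blow-up one ball}: on the complement it follows from $\phi_*J=-J\phi_*$ together with $\Pi$ being $(J,\tilde J)$-holomorphic, and on the glued regions from $\tilde c_{12}\,\tilde i=-\tilde i\,\tilde c_{12}$ (anti-holomorphicity of the lifted conjugation) and the holomorphicity of $\tilde\gamma_1,\tilde\gamma_2$. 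That $\tilde\omega$ tames (is compatible with) $\tilde J$ is inherited verbatim from Proposition \ref{prop:Holomorphic blow-up}. I expect the continuity-on-the-overlap step above to be the only place requiring care; everything else is a transcription of the one-ball argument with $\tilde c$ replaced by the pair of mutually inverse lifted anti-holomorphic maps $\tilde c_{12},\tilde c_{21}$.
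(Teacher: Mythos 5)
Your overall strategy (conjugate $\phi$ through $\Pi$ outside, and use a lifted anti-holomorphic ``swap'' map on the two glued-in copies of $\mathcal{L}(1+2\epsilon)$) is the right one, and your continuity check on the collar is essentially the computation in the paper. But there is a genuine gap at the point where you assert that $c_{12}=\gamma_2^{-1}\circ\phi\circ\gamma_1$ ``sends $0$ to $0$.'' Nothing in the hypothesis forces $\phi(\gamma_1(0))=\gamma_2(0)$: the assumption $Im(\phi\circ\gamma_1)=Im(\gamma_2)$ is only about images, so $c_{12}(0)$ is some point of $B_2$ which in general is not the origin. This matters because the blow-up you invoke is performed at the centers $\gamma_1(0)$ and $\gamma_2(0)$, and an anti-holomorphic diffeomorphism of balls lifts to a diffeomorphism of the blow-ups only when the blown-up points correspond under the map. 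If $c_{12}(0)\neq 0$, your $\pi_2^{-1}\circ c_{12}\circ\pi_1$ extends over the exceptional divisor $\mathcal{L}_1(0)$ only by collapsing it to the single point $\pi_2^{-1}(c_{12}(0))$; it is then not invertible, so your $\tilde\phi$ is not a diffeomorphism, let alone an involution. Note also that you cannot simply recentre $\gamma_2$ by precomposing with an automorphism of the ball, since holomorphic symplectomorphisms of $(B(1+2\epsilon),\lambda^2\omega_0,i)$ fixing the ball are unitary and fix $0$.

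The paper's proof avoids this exactly where yours breaks: instead of keeping $\gamma_2$, it replaces the two-ball embedding by $\psi$ with $\psi_1=\gamma_1$ and $\psi_2=\phi\circ\gamma_1\circ c$ (composed with the swap $\iota$), which is still symplectic and holomorphic, has the same image as $\gamma$ by the hypothesis $Im(\phi\circ\gamma_1)=Im(\gamma_2)$, and is genuinely equivariant: $\psi\circ c\circ\iota=\phi\circ\psi$. The blow-up is then taken relative to $\psi$ (so the second blown-up point is $\phi(\gamma_1(0))$, not $\gamma_2(0)$), and the involution is the conjugate of the tautological lift $\tilde c\circ\tilde\iota$, for which well-definedness and $\tilde\phi_*\tilde J=-\tilde J\tilde\phi_*$ follow exactly as in Lemma \ref{lem:Real blow-up one ball}. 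This is the content of the remark following Proposition \ref{prop:Real blow-up}: when the balls miss the Lagrangian, the blow-up is not constructed relative to the given embedding but relative to another one with the same image, modified to commute with $\phi$. Your argument becomes correct if you either add the hypothesis $\phi(\gamma_1(0))=\gamma_2(0)$ or make this same replacement before blowing up.
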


\begin{proof}Define a map $\psi:\Pi_{i=1}^2 (B_i(1+2\epsilon),\lambda^2\omega_0,i)\to (M,\omega,J)$ by
\begin{equation*}
\psi(x)=\begin{cases} \gamma(x) & x\in B_1 \\
                     \phi \circ \gamma\circ c \circ \iota(x) & x\in B_2
\end{cases}
\end{equation*}
where $\iota:\Pi_{i=1}^2B_i\to \Pi_{i=1}^2B_i$ is the map given by
$\iota(x\in B_i)=x \in B_{i+1\mod 2}$. We note that, since $c$ and
$\phi$ are anti-holomorphic and $\gamma$ is holomorphic, $\psi$ is
holomorphic, and, similarly, since $c$ and $\phi$ and anti-symplectic,
and $\gamma$ is symplectic, $\psi$ is symplectic as well. Furthermore,
$\gamma,c,\phi,$ and $\iota$ are all 1-1, and we conclude that $\psi$
is a symplectic, holomorphic embedding. Now observe that $c\circ \iota$ is
an antisymplectic involution on $\Pi_{i=1}^2B_i$, $Im(\psi)=Im(\gamma)$ by definition, and that $\psi\circ
c\circ \iota=\phi\circ \psi$, so that $\psi$ is a real embedding for
the real structures $c\circ \iota$ and $\phi$. We now construct the
blow up of $M$ relative to $\psi$ as in McDuff and Polterovich \cite{McDuff_Polterovich_1994} (which is as in the relative blow-up without the Lagrangian). 

On $\coprod_{i=1}^2 \mathcal{L}_i$, we put the anti-symplectic involution
$\tilde{c}\circ \tilde{\iota}$, where $\tilde{c}$ is complex
conjugation on $\mathcal{L}$, and, as above,
$\tilde{\iota}:\coprod_{i=1}^2 \mathcal{L}_i\to \coprod_{i=1}^2\mathcal{L}_i$
is given by $\tilde{\iota}((z,l)\in \mathcal{L}_i)=(z,l)\in
\mathcal{L}_{i+1 \mod 2}$. Recall that
$\pi\circ\tilde{c}(z,l)=c\circ\pi(z,l)$, since $z\in
l\Longleftrightarrow\overline{z}\in\overline{l}$ and $\overline{0}=0$, and note that, by definition of
$\iota$ and $\tilde{\iota}$, we also have $\pi\circ\tilde{c}\circ \tilde{\iota}(z,l)=c\circ \iota \circ\pi(z,l)$.

  Given $\epsilon,\lambda > 0$, we define $\nu(\epsilon,\lambda)$ to
  be the symplectic form on
  $\coprod_{i=1}^{2}\mathcal{L}_{i}(1+2\epsilon)$, such that the
  restriction on each $\mathcal{L}_i$ is given by
  $\nu(\epsilon,\lambda)|_{\mathcal{L}_i}:=\tilde{\tau}(\epsilon,\lambda)$,
  where $\tilde{\tau}(\epsilon,\lambda)$ is the symplectic form on
  $\mathcal{L}$ constructed in Proposition \ref{prop:Blow-up local
    model}. 

 Now define a map $\tilde{\phi}:\tilde{M}\to\tilde{M}$
  by
\begin{equation*}
\tilde{\phi}(x)=\begin{cases}
    \Pi^{-1}\circ\phi\circ \Pi(x), & x\in\tilde{M}\backslash\tilde{\psi}\left(\coprod_{i=1}^2 \mathcal{L}(1+\epsilon)\right)\\
    \tilde{\psi}\circ\tilde{c}\circ \tilde{\iota}\circ \tilde{\psi}^{-1}(x), &
    x\in \tilde{\psi}\left(\coprod_{i=1}^2 \mathcal{L}(1+2\epsilon)\right),\end{cases}
\end{equation*}

where $\tilde{\psi}$ is the embedding of $\coprod_{i=1}^2
\mathcal{L}(1+2\epsilon)$ as in Figure \ref{eq:Blow-up/down
  diagram}. By the commutativity of Figure \ref{eq:Blow-up/down
  diagram}, we have, for $x\in \mathcal{L}(1+2\epsilon)\backslash
\mathcal{L}(1+\epsilon)$ \begin{align*}
  \tilde{\psi}\circ\tilde{c}\circ\tilde{\iota}\circ\tilde{\psi}^{-1}(x) = &
\tilde{\psi}\circ\pi^{-1}\circ c\circ\iota\circ \pi\circ\tilde{\psi}^{-1}(x)\\
  = & \mbox{\ensuremath{\Pi}}^{-1}\circ\psi\circ c\circ \iota
\circ\psi^{-1}\circ\Pi(x)\\
  = &
  \mbox{\ensuremath{\Pi}}^{-1}\circ\psi\circ\psi^{-1}\circ\phi\circ\Pi(x)
  =
  \mbox{\ensuremath{\Pi}}^{-1}\circ\phi\circ\Pi(x).\end{align*}
Therefore $\tilde{\phi}$ is well-defined and a diffeomorphism. That
$\tilde{\phi}$ is an anti-symplectic involution follows from the fact
that $\Pi^{-1}\circ\phi\circ\Pi$ and $\tilde{\psi}\circ\tilde{c}\circ\tilde{\iota}\circ
\tilde{\psi}^{-1}$ are anti-symplectic involutions on their respective
domains.

  To see the last statement in the proposition, for $x\in
  \tilde{M}\backslash \tilde{\psi}(\mathcal{L}(1+\epsilon))$, we
  compute
  \begin{align*}
    \tilde{\phi}_*{\tilde{J}} = & \Pi^{-1}_*\phi_*\Pi_*\tilde{J}\\
    = & \Pi^{-1}_*\phi_*J\Pi_*\\
    = & -\Pi^{-1}_*J\phi_*\Pi_*\\
    = & -\tilde{J}\Pi^{-1}_*\phi_*\Pi_* = -\tilde{J}\tilde{\phi}_*.
  \end{align*}
  For $x\in \tilde{\psi}(\mathcal{L}(1+2\epsilon))$, we have
  \begin{align*}
    \tilde{\phi}_*\tilde{J} = &
\tilde{\psi}_*\tilde{c}_*\tilde{\iota}_*\tilde{\psi}^{-1}_*\tilde{J}\\
    = & \tilde{\psi}_*\tilde{c}_*\tilde{\iota}_*\tilde{i}\tilde{\psi}^{-1}_*\\
    = & -\tilde{\psi}_*\tilde{i}\tilde{c}_*\tilde{\iota}_*\tilde{\psi}^{-1}_*\\
    = & -\tilde{J}\tilde{\psi}_*\tilde{c}_*\tilde{\iota}_*\tilde{\psi}^{-1}_* =
-\tilde{J}\tilde{\phi}_*,
  \end{align*}
  as desired.
\end{proof}

\begin{proof}[Proof of Proposition \ref{prop:Real blow-up}]
 For each $\gamma_i$ with $Im(\gamma_i)\cap L\neq \emptyset$ we construct the blow up using Lemma \ref{lem:Real blow-up one
    ball}. For each $\gamma_i$ such that
  $Im(\gamma_i)\cap Fix(\phi)=\emptyset$, we first recall that, by hypothesis, $Im(\gamma_i)\cap Im(\phi\circ \gamma_i)=\emptyset$. Since $Im(\phi\circ \gamma)=Im(\gamma)$, then there is a $\gamma_{i^{'}}$ with
  $Im(\phi\circ \gamma_i)=Im(\gamma_{i^{'}})$. We blow-up the pair $\gamma_i$,$\gamma_{i{'}}$ using Lemma
  \ref{lem:Real blow-up two balls}. The result follows.
\end{proof}

We now remove the hypothesis that our ball embeddings are
holomorphic. To do so, we isotope our form to a cohomologous one that admits a
small holomophic embedding around the
center of our embedding, which we may do under appropriate
assumptions on an almost complex structure that tames the symplectic
form. We then create a family of symplectic forms $\tilde{\omega}_t$ on the
blow-up such that the original one tames (or is compatible with) the
almost complex structure $\tilde{J}$ on the blow-up, and the last one
is in the cohomology class corresponding to the ball embedding. This
is the same strategy as that used in McDuff and Polterovich
\cite{McDuff_Polterovich_1994}, and the following proposition and its
proof are variants of Proposition 2.1.C in
\cite{McDuff_Polterovich_1994}, which we modify to keep track of the
Lagrangians $L$ and $\tilde{L}$ throughout the process. 
\begin{prop}
  \label{prop:Family of forms}
\begin{enumerate}
\item Let $\psi:
  (B(1+2\epsilon),\lambda^2\omega_0,B_{\mathbb{R}}(1+2\epsilon))\to
  (M,\omega,L)$ be a relative symplectic embedding. Suppose that $J$
  is an almost complex structure on $M$ which tames (is compatible
  with) $\omega$ and which is relatively integrable at $\psi(0)$. 
  
Then there
  exists a manifold $\tilde{M}$ with a submanifold $\tilde{L}$, a family of symplectic forms $\tilde{\omega}_t$, $t\in [0,1]$ on $\tilde{M}$, an almost complex structure $\tilde{J}$ on
  $\tilde{M}$, and an onto map $\Pi:\tilde{M}\to M$ such that $\tilde{\omega}_0$
  tames (is compatible with) $\tilde{J}$, $\tilde{L}$ is a Lagrangian
  for all the $\tilde{\omega}_t$, $\Pi(\tilde{L})=L$, and $\tilde{\omega}_1$
  satisfies
  \begin{equation*} [\tilde{\omega}_1]=[\Pi^*\omega] -
    \lambda^2 e,
  \end{equation*}
  where $e$ is the Poincare dual of the class $[\Pi^{-1}(\psi'(0))]\in
H_2(M;\mathbb{Z})$.
\item

  Suppose, furthermore, $M$ is a real symplectic manifold with real
  structure $\phi$, $\text{Fix} (\phi) =L$, $J$ satisfies $\phi_* J
  \phi_*=-J$, and $\psi \circ c=\phi
  \circ \psi$.  Then there exists a family of real structures
  $\tilde{\phi}_t$ on $\tilde{M}$ such that
  $\tilde{\phi}_t^{*}\tilde{\omega}_t=-\tilde{\omega}_t$,
  $(\tilde{\phi}_t)_*\tilde{J}(\tilde{\phi}_t)_*=-\tilde{J}$.
\end{enumerate}
\end{prop}

The proof depends on the following proposition, which is an adaptation
of Proposition 5.5.A in McDuff and Polterovich
\cite{McDuff_Polterovich_1994}, and which we prove in Section
\ref{subsec:Locally holomorphic maps}.

\begin{prop}
\label{prop:Normalization}
\begin{enumerate}
\item
  Let $(M,\omega)$ be a symplectic manifold and let $L \subset M$ be a Lagrangian submanifold. Let
\begin{equation*}\psi:(B(1+2\epsilon),\lambda^{2}\omega_{0},B_{\mathbb{R}}(1+2\epsilon))\to(M,\omega,L)
\end{equation*}
  be a relative symplectic embedding, and let $J$ be an
  almost complex structure on $M$ which tames
  $\omega$ and is relatively integrable at $\psi(0)\in L$. 

Then, for every compact subset $K \subset M \backslash \psi(0)$ there exists a
symplectic form $\omega'$ on $M$ isotopic to $\omega$ such that $\omega =
\omega^{'}$ on $K$ and $\omega^{'}$ is $J$-standard in a neighborhood
$\mathcal{N}$ of $\psi(0)$, i.e. $\omega{'}$ is Kahler on $\mathcal{N}$, and the
associated metric is flat in a neighborhood of $\psi(0)$.

%%% Local Variables: 
%%% mode: latex
%%% TeX-master: "../Antonio Rieser Thesis"
%%% End: 

\item \label{enum:Point 2}
\label{prop:Normalization2} 
In addition to the above, suppose that $M$ is a real
symplectic manifold with 
real structure $\phi$, $Fix(\phi)=L$, $J$ satisfies $\phi_*J\phi_*=-J$, $J$ is
symmetrically integrable around $\psi(0)$, and $\phi \circ \psi = \psi \circ c$.

Then we can construct the symplectic form $\omega^{'}$ on
$M$ to satisfy the conclusions above, and so that $\phi$ is a real structure for
$\omega^{'}$ and $\omega$ and $\omega'$ are isotopic through real
symplectic forms.

\end{enumerate}

%%% Local Variables: 
%%% mode: latex
%%% TeX-master: "../Antonio Rieser Thesis"
%%% End: 

\end{prop}

\begin{proof}[Proof of Proposition \ref{prop:Family of forms}]
  By Proposition \ref{prop:Normalization}, there is a symplectic form $\omega'$
on $M$ which is isotopic to $\omega$ an $J$-standard. By Proposition
\ref{prop:Real-Moser-Stability}, there is a diffeormorphism $F:M\to M$ such
that $F^*\omega = \omega'$. Replace $\omega$ by $\omega'$ and $\psi$ by $F
\circ \psi$. Abusing notation, we will refer to the new form and new
embedding by $\omega$ and $\psi$, respectively. Since $\omega$ is now $J$
standard, it follows that, for some 
  $\delta > 0$, there exists a relative holomorphic symplectomorphism
$\eta:B(\delta) \to M$, $\eta(0) = \psi(0)$. Now define the function
$S_{t}:B(1+2\epsilon)\to B(1+2\epsilon)$
by:
  \begin{equation*}
S_{t}(x)=\beta(t)x+(1-\beta(t))\left[\lambda(1+2\epsilon)\delta^{-1}
\alpha(|x|)+(1-\alpha(|x|))\right]x,
  \end{equation*}
  where $\beta(t)$ is a bump function with $\beta(t)=1$ for $t\leq 0$
  and $\beta(t)=0$ for $t\geq 1$, and $\alpha(t)$ is a bump function
  with $\alpha(t)=1$ for $t\leq \delta$ and $\alpha(t)=0$ for $t\geq
  1 + 2\epsilon - \epsilon'$ for some small $\epsilon' >0$. We wish to show that
$S_{t}$
  has the following properties:
  \begin{enumerate}
  \item $S_{0}=Id$
  \item $S_{t}$ is equal to the identity near $\partial
    B(1+2\epsilon)$
  \item $S_{t}^*\omega_0=\mu(t)\omega_0$, where
    $\mu(t):\mathbb{R}\to \mathbb{R}$ and
    $\mu(1)=\lambda^2(1+2\epsilon)^2\delta^{-2}$ on $B(\delta)$ for
    some $\delta > 0$
  \item $c \circ S_t = S_t \circ c$, where $c$ denotes complex conjugation
\label{item:complex conj}
  \item $B_{\mathbb{R}}(\lambda+\epsilon)$ is a Lagrangian for
    $S_{t}^*\omega_0$\label{item:Lag}
  \end{enumerate}
  The first four items above follow directly from the definitions of
$S_{t},\alpha$ and $\beta$, and item \ref{item:Lag} follows
immediately from item \ref{item:complex conj}. 

  Now let $F_t:M\to M$ be the extension of 
  $\psi \circ S_{t}\circ \psi^{-1}:Im(\psi)\subset M \to M$ by the identity
map, and
  set $\omega_t=F_t^*\omega$. Define
  \begin{equation*}    
\nu_{t}(z):=\eta\left(\frac{\delta}{1+2\epsilon}z\right):\left(B(1+2\epsilon),
     \frac{\delta^2}{(1+2\epsilon)^2}\mu(t)\omega_0\right) \to (M,\omega_t).
  \end{equation*}
  Since $\psi$ is relative holomorphic embedding, $\nu$ is
  also a holomorphic embedding, and since
  $\nu_t^*\omega_t=\frac{\delta^2}{(1+2\epsilon)^2}\mu(t)\omega_0$, $\nu_t$
  embeds symplectically into $(M,\omega_t)$ for every $t$.  Now take the forms
$\tilde{\omega}_t$
obtained by blowing up the family
  $\omega_t$ by the embeddings $\nu_t$.  We claim that
  $\tilde{\omega}_t$ verifies the conclusion of the theorem. By definition, the
$\nu_{t}$ are a symplectic and holomorphic maps into $M$, and by
hypothersis, $\omega_0$ is compatible with $J$, so by Proposition
  \ref{prop:Holomorphic blow-up}, $\tilde{\omega}_0$ is compatible
  with $\tilde{J}$. Since $F_1$ is isotopic to the identity, we see that
$[\omega_1]=[\omega]$, 
  from which it follows that $[\Pi^*\omega_1]=[\Pi^*\omega]$.
$\tilde{\omega}_1$ is therefore in the desired cohomology class, and the
  first part of the theorem is proved.

  If $M$ has a real structure $\phi$, and $\psi$ satisfies the
  hypotheses in the latter half of the theorem, then since $c\circ S_t = S_t
\circ c$, it follows that $\phi \circ F_t = F_t \circ \phi$, and therefore
$\phi^*\omega_t = -\omega_t$. Blowing up $(M,\omega_t)$, we create a family of
involutions $\tilde{\phi}_t:\tilde{M}\to \tilde{M}$ such that
$\tilde{\phi}^*\tilde{\omega}_t=-\tilde{\omega}_t$ and
$(\tilde{\phi})_*\tilde{J}(\tilde{\phi})_*=-\tilde{J}$, finishing the proof of
the proposition.
\end{proof}

We now prove Theorem \ref{thm:Blow-up}.

\begin{proof}[Proof of Theorem \ref{thm:Blow-up}]
  By Remark \ref{rem:Locally integrable acs}, there exists
  an almost complex structure on $M$ which is relatively integrable in a
  neighborhood of the points $\psi_j(0)$. Then by Proposition
  \ref{prop:Family of forms}, there exists a manifold $\tilde{M}$ with
submanifold $\tilde{L}$ and a family of symplectic
  forms $\tilde{\omega}_t$ on $\tilde{M}$ such that $\tilde{L}$ is a Lagrangian
for all $\tilde{\omega}_t$, and which satisfies
  $[\tilde{\omega}_1]=[\Pi^{*}\omega] - \sum_{k=1}^q \lambda_k^2e_k$,
  where the $e_k$ are the Poincar\'e duals of the exceptional spheres
  $C_k$ added in the blow-up.

  If, in addition, $M$ has a real structure $\phi$ and
  $Im(\psi)=Im(\phi\circ\psi)$, then, by Remark \ref{rem:Locally integrable acs}, $J$ may be chosen so that it is symmetrically integrable around the points $\psi_j(0)$ and $\phi_*J\phi_*=-J$. Therefore, by Proposition \ref{prop:Family of forms}, there exists a family of maps $\tilde{\phi}_t$ on the blow-up such
  that $\tilde{\phi}_t^*\tilde{\omega}_t=-\tilde{\omega}_t$, and this proves the theorem.
\end{proof}

\subsection{Blow-down}\label{subsubsec:Blow-down}

We now construct the blow-down of a symplectic manifold
$(\tilde{M},\tilde{\omega},\tilde{L})$. In particular, we will prove
Theorem \ref{thm:Blow-down}, stated again below.

\begin{thm*}[Theorem \ref{thm:Blow-down}]

\end{thm*}
In parallel to the blow-up construction, we begin by constructing a
family of forms on $\mathbb{C}^n$ from the forms
$\rho(\delta,\lambda)$, which we will then use to construct the global
form in the blow-down. The following proposition is adapted from
Proposition 5.1.B in \cite{McDuff_Polterovich_1994}.

\begin{prop}
  \label{prop:Blow-down local model}For every
  $\epsilon,\delta,\lambda>0$, there exists a K\"ahler form
  $\tau=\tau(\epsilon,\delta,\lambda)$ on $\mathbb{C}^{n}$ such that
  the following holds:
  \begin{enumerate}
  \item $\pi^{*}(\tau)=\rho(\delta,\lambda)$ on
    $\mathcal{L}-\mathcal{L}(1+\epsilon)$
  \item $\tau=\lambda^{2}\omega_{0}$ on $B(1)\subset\mathbb{C}^{n}$
  \item $\tau$ is compatible with $i$.
  \item $c^{*}\tau=-\tau$, where $c$ denotes complex conjugation on
    $\mathbb{C}^{n}$.
  \item $\mathbb{R}^{n}$ is a Lagrangian for $\tau$.\end{enumerate}

\end{prop}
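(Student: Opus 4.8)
The plan is to follow the strategy of the proof of Proposition \ref{prop:Blow-up local model}, but now pulling $\omega_0$ back to $\mathbb{C}^n$ directly by a single monotone radial map. Recall that there the radial map $F(x)=h_\lambda(x)x$, with $h_\lambda(x)=(1+\lambda^2/|x|^2)^{1/2}$, satisfies $\pi^{*}F^{*}\omega_{0}=\rho(1,\lambda)$ on $\mathcal{L}\setminus\mathcal{L}(0)$ by Lemma \ref{lem:The calculation}. Rescaling the domain by $m_\delta(x)=\delta x$, which lifts to $\tilde m_\delta(z,l)=(\delta z,l)$ on $\mathcal{L}$ so that $\pi\circ\tilde m_\delta=m_\delta\circ\pi$ and $\theta\circ\tilde m_\delta=\theta$, the radial map $G(x):=F(\delta x)=(\delta^{2}|x|^{2}+\lambda^{2})^{1/2}\,x/|x|$ is a diffeomorphism of $\mathbb{C}^n\setminus\{0\}$ onto its image, and (using $m_\delta^{*}\omega_0=\delta^{2}\omega_0$ and $\theta\circ\tilde m_\delta=\theta$) one gets $\pi^{*}G^{*}\omega_{0}=\tilde m_\delta^{*}\pi^{*}F^{*}\omega_{0}=\tilde m_\delta^{*}\rho(1,\lambda)=\rho(\delta,\lambda)$ on $\mathcal{L}\setminus\mathcal{L}(0)$. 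I would then set $\tau:=H^{*}\omega_{0}$ on all of $\mathbb{C}^n$, where $H\colon\mathbb{C}^n\to\mathbb{C}^n$ is the radial map $H(x)=g(|x|)\,x/|x|$ for $x\neq0$ and $H(0)=0$, with $g\colon[0,\infty)\to[0,\infty)$ smooth, strictly increasing, equal to $\lambda t$ for $t\le1$ and equal to $(\delta^{2}t^{2}+\lambda^{2})^{1/2}$ for $t\ge1+\epsilon$; thus $H=\lambda\cdot\mathrm{Id}$ on $B(1)$ and $H=G$ on $\mathbb{C}^n\setminus B(1+\epsilon)$.

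Granting such an $H$, the conclusions follow quickly. Since $g$ is a strictly increasing bijection of $[0,\infty)$ with $g(0)=0$, $H$ is a monotone radial function in the sense of Definition \ref{defn:Monotone radial} and is a diffeomorphism of $\mathbb{C}^n$; hence $\tau=H^{*}\omega_0$ is closed and $\tau^{n}=H^{*}(\omega_0^{n})$ is a volume form, so $\tau$ is symplectic. Property (1) holds because on $\mathcal{L}-\mathcal{L}(1+\epsilon)\subset\mathcal{L}\setminus\mathcal{L}(0)$ we have $\pi^{*}\tau=\pi^{*}G^{*}\omega_0=\rho(\delta,\lambda)$, and property (2) because $\tau=(\lambda\cdot\mathrm{Id})^{*}\omega_0=\lambda^{2}\omega_0$ on $B(1)$. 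For property (3), since $H$ is a monotone radial function, $H^{*}\omega_0$ is K\"ahler by Lemma \ref{lem:Pull back of radial function is K\"ahler}, hence compatible with $i$ by Lemma \ref{lem:Omega compatible i K\"ahler} (at the origin this is immediate from $\tau=\lambda^{2}\omega_0$); this is also exactly the assertion that $\tau$ is a K\"ahler form. Properties (4) and (5) are obtained as in the proof of Proposition \ref{prop:Blow-up local model}: being radial, $H$ commutes with complex conjugation $c$, so $c^{*}\tau=c^{*}H^{*}\omega_0=H^{*}c^{*}\omega_0=-H^{*}\omega_0=-\tau$, and since $\mathbb{R}^{n}=\mathrm{Fix}(c)$ acts as the identity on its own tangent spaces this forces $\tau|_{\mathbb{R}^{n}}=0$, so $\mathbb{R}^{n}$ is Lagrangian.

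The only step that requires genuine work, and the expected main obstacle, is the construction of the radial profile $g$: one must join the segment $g(t)=\lambda t$ on $[0,1]$ to $g(t)=(\delta^{2}t^{2}+\lambda^{2})^{1/2}$ on $[1+\epsilon,\infty)$ by a smooth, strictly increasing function on $[1,1+\epsilon]$ that matches each piece to first order at the corresponding endpoint. This is possible: the left endpoint has value $\lambda$ and slope $\lambda$, the right endpoint has value $(\delta^{2}(1+\epsilon)^{2}+\lambda^{2})^{1/2}>\lambda$ and a positive slope, and one builds $g$ on $[1,1+\epsilon]$ by prescribing $g'$ there to be a strictly positive, bump-function-weighted profile whose integral equals the required increment and whose endpoint values match the two pieces, the same bookkeeping as in the proof of Lemma \ref{lem:F monotone}. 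Note that, in contrast to the blow-up case, no smallness condition relating $\epsilon$, $\delta$ and $\lambda$ is needed here, because the interpolation runs from a smaller to a strictly larger positive value; the strict monotonicity of $H$ on all of $\mathbb{C}^n$ that results is exactly what is needed to apply Lemmas \ref{lem:Pull back of radial function is K\"ahler} and \ref{lem:Omega compatible i K\"ahler}. This completes the proof.
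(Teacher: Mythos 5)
Your proof is correct and follows essentially the same route as the paper: both define $\tau$ as the pullback of $\omega_{0}$ by a monotone radial map that is the dilation by $\lambda$ on $B(1)$ and agrees outside $B(1+\epsilon)$ with the map whose $\pi$-pullback produces $\rho(\delta,\lambda)$ via Lemma \ref{lem:The calculation}, and then both invoke Lemmas \ref{lem:Pull back of radial function is K\"ahler} and \ref{lem:Omega compatible i K\"ahler} for compatibility and the commutation of radial maps with $c$ for the anti-invariance. The only cosmetic difference is normalization: the paper writes $\tau=\delta^{2}G^{*}\omega_{0}$ with $G$ built from $\nu=\lambda/\delta$, whereas your $H$ equals $\delta\cdot G$, which yields the identical form.
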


\begin{proof}
  Note first that $\rho(\delta,\lambda)=\delta^{2}\rho(1,\nu)$ for
  $\nu=\lambda/\delta$.  Define
  \begin{equation*}
h_{\lambda}(z):=\left(1+\left(\frac{\lambda}{|z|}\right)^{2}\right)^{1/2},   
  \end{equation*}
and let $\epsilon_0>0$ be such that $2\epsilon_0(\nu^2 - 1) + \epsilon_0^2
(\nu^2 -1) < 1$. Let $\beta(t)$ be a smooth non-increasing function which is $1$
for $t\leq1$ and $0$ for
  $t\geq 1+\epsilon_0$, and let $\gamma(t)$ be a smooth non-increasing function which is $1$ for $t\leq 1+\epsilon_0$ and $0$ for $t\geq 1+\epsilon$. Then we define the map
  $G:\mathbb{C}^{n}\to\mathbb{C}^{n}$ by 
  \begin{equation*}
G(z)=\begin{cases}
    \nu z & \mbox{for }|z|\leq1\\
    \beta(|z|)\nu z+(1-\beta(|z|))\nu(1+\epsilon_0) & \mbox{for }1<|z|<1+\epsilon_0\\
    \gamma(|z|)\nu(1+\epsilon_0) + (1-\beta(|z|)h_{\nu}(z)z & \mbox{for } 1 +
\epsilon_0 \leq |z|\leq 1+\epsilon\\
    h_{\nu}(z)z & \mbox{for } 1+\epsilon < |z| 
    \end{cases}
  \end{equation*} and we
  define the form $\tau=\delta^{2}G^{*}\omega_0$. We claim that $\tau$
  satisfies the properties in the proposition. First,
to see the $G$ is a diffeomorphism, note that $\beta$ and
$\gamma$ are smooth. Since $G$ is a diffeomorphism on each region, it now
follows that $G$ is a diffeomorphism. 

To see that $G$ is monotone, we let $t:= |z|$ and $H(t):=|G(z)|$. On the first
region, it is clear from the definition of $G$ that $G$ is monotone. One
the second region, we have
\begin{equation*}
\frac{dH}{dt} = \beta'(t)\nu t + \beta(t)\nu - \beta'(t)(1 + \epsilon_0)\nu.
\end{equation*}
The second term is always non-negative, and, since $t\leq 1 + \epsilon_0$ and
$\beta'(t) \leq 0$, it follows that $\beta'(t)(t-1-\epsilon_0) \geq 0$, and
therefore $G$ is monotone on this region. 

For the third region, we have
\begin{equation*}
 \frac{dH}{dt} = \gamma'(t)(1+\epsilon_0)\nu + (t^2 +\nu^2)t(1 - \beta(t)) -
\gamma'(t)(t^2+\nu^2)^{1/2}
\end{equation*}
The second term is always non-negative, and since $2\epsilon_0(\nu^2 - 1) +
\epsilon_0^2 (\nu^2-1) < 1$ by hypothesis, it follows that
\begin{align*}
 \nu^2 +2\nu^2\epsilon_0 - 2\epsilon_0 - \epsilon_0^2 + \nu^2\epsilon_0^2 \leq&
1 + \nu^2 \implies\\
\nu^2(1+\epsilon_0)^2 \leq& \nu^2 + (1+\epsilon_0)^2.
\end{align*}
Recall that $\gamma'(t) \leq 0$, so taking the square root of both sides, we see
that 
\begin{equation*}
\gamma'(t)(\nu^2(1+\epsilon_0)^2 - \nu^2 + (1+\epsilon_0)^2) \geq 0, 
\end{equation*}
and therefore $G$ is monotone on this region. Since $G$ is monotone on the last
region by definition, $G$ is therefore
monotone everywhere.

The first property in the conclusion of the lemma
  follows from Lemma \ref{lem:The calculation}, the second from the
  definitions of $\tau$ and $G$ for $|z|\leq1$, and the third follows from Lemmas
  \ref{lem:Omega compatible i K\"ahler} and \ref{lem:Pull back of radial
    function is K\"ahler} and the fact that $G$ is a monotone radial
  function. To see the fourth point, note that $G(z)=\alpha(|z|)z$ for
  some real-valued function $\alpha:\mathbb{R}\to \mathbb{R}$. This
  implies that $c\circ G=G\circ c$, where $c$ is complex conjugation
  on $\mathbb{C}^n$, and therefore $c^*\delta^2 G^*\omega_0= \delta^2
  G^*c^*\omega_0=-\delta^2 G^* \omega_0$, as desired. This, in turn,
  proves the fifth point as well, and completes the proof.\end{proof}

In parallel to the blow-up construction, we split the blow-down into
two parts, the relative blow-down, in which we consider only a
Lagrangian, and we do not consider a real structure, and the real
blow-down. We now construct the relative blow-down.

\begin{prop}
  \label{prop:Mixed blow-down}
  Let $(\tilde{M},\tilde{\omega})$ be a symplectic manifold with
  Lagrangian $\tilde{L}$, and let $\tilde{J}$ be an
  $\tilde{\omega}$-tame (compatible) almost complex structure. Suppose
  there is a $(p,q)$-mixed holomorphic and symplectic embedding
  
\begin{equation*}\psi:\coprod_{j=1}^{k}(\mathcal{L}_j(r_j),\rho_j(\delta_j,\lambda_j),\mathcal{R}_j(r_j),i)\hookrightarrow(\tilde{M},\tilde{\omega},\tilde{L},\tilde{J})
\end{equation*}
  such that
  \begin{equation*}
    \psi^{-1}(L)=\coprod_{j=1}^{p}\mathcal{R}_j(r_j).
  \end{equation*} 
Then the conclusions of the first part of Theorem \ref{thm:Blow-down} are satisfied.
\end{prop}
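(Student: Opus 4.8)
The plan is to carry out for the blow-down the construction parallel to that of Proposition~\ref{prop:Holomorphic blow-up}, reversing the roles of $M$ and $\tilde M$: the blow-down map $\pi:\mathcal{L}\to\mathbb{C}^n$ now plays the part that the inclusion of the ball played there, and the K\"ahler forms $\tau_j:=\tau(\epsilon_j,\delta_j,\lambda_j)$ furnished by Proposition~\ref{prop:Blow-down local model} replace the forms $\tilde\tau(\epsilon,\lambda)$. As in the earlier proofs I would first treat $k=1$ and then repeat the argument independently near each exceptional divisor. After rescaling so that $r_j=1+2\epsilon_j$, the map $\pi_j$ restricts to a diffeomorphism of boundary pairs $(\partial\mathcal{L}_j(r_j),\partial\mathcal{R}_j(r_j))\to(\partial B_j(r_j),\partial B_{\mathbb{R},j}(r_j))$; calling this restriction $\pi_\partial$, set
\[
M:=\Bigl(\tilde M\backslash\psi\bigl(\coprod_{j=1}^k\mathcal{L}_j(r_j)\bigr)\Bigr)\ \cup_{\psi\circ\pi_\partial^{-1}}\ \coprod_{j=1}^k\bigl(B_j(r_j),B_{\mathbb{R},j}(r_j)\bigr),
\]
and let $L\subset M$ be obtained from $\tilde L$ by the same recipe, gluing in $B_{\mathbb{R},j}(r_j)$ along $\psi(\partial\mathcal{R}_j(r_j))$ for $j\le p$; by the hypothesis $\psi^{-1}(\tilde L)=\coprod_{j\le p}\mathcal{R}_j(r_j)$ the bundles with $j>p$ miss $\tilde L$ and contribute nothing to $L$. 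Writing $\iota_j:B_j(r_j)\hookrightarrow M$ for the inclusion of the glued ball, define $\Pi:\tilde M\to M$ to be the identity off $\mathrm{Im}(\psi)$ and $\iota_j\circ\pi_j\circ\psi_j^{-1}$ on $\psi_j(\mathcal{L}_j(r_j))$. Then $\Pi$ is smooth and onto, it collapses each $\psi_j(\mathcal{L}_j(0))\cong\mathbb{C}P^{n-1}$ to the center of $B_j$, it is a diffeomorphism over the complement of these centers, and $\Pi(\tilde L)=L$; smoothness of $L$ across the gluing is the statement that $\mathcal{R}(r)$ blows down to $B_{\mathbb{R}}(r)$, which holds because $\pi$ is a real biholomorphism away from the zero section.

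Next I would equip $M$ with its geometric structures. Define $\omega$ to equal $\tilde\omega$ under the identification $\Pi$ on $M\backslash\bigcup_j\iota_j(B_j(1+\epsilon_j))$, and to equal $\tau_j$ under the identification $\iota_j$ on each $\iota_j(B_j(r_j))$. Well-definedness reduces to the single identity on the overlap annulus $\mathcal{L}_j(r_j)\backslash\mathcal{L}_j(1+\epsilon_j)$,
\[
\psi_j^*\tilde\omega=\rho_j(\delta_j,\lambda_j)=\pi_j^*\tau_j,
\]
the first equality being that $\psi$ is symplectic and the second being Proposition~\ref{prop:Blow-down local model}(1). Closedness of $\omega$ is local and immediate, and non-degeneracy holds since $\tilde\omega$ is symplectic and each $\tau_j$ is K\"ahler. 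The almost complex structure $J$ is set to be $\Pi_*\tilde J\Pi_*^{-1}$ off the balls and $i$ (transported by $\iota_j$) on the balls; it is well-defined because $\pi_j$ and $\psi_j$ are holomorphic near the relevant boundaries, and $\omega$ tames (is compatible with) $J$ by the argument of Proposition~\ref{prop:Holomorphic blow-up}, using that $\tau_j$ is $i$-compatible (Proposition~\ref{prop:Blow-down local model}(3)) and that $\Pi$ is holomorphic where $\omega=\tilde\omega$. Finally $L$ is $\omega$-Lagrangian: off the balls it is $\tilde L$ with $\omega=\tilde\omega$, and inside $B_j(r_j)$ it is $B_{\mathbb{R},j}(r_j)$, which is $\tau_j$-Lagrangian by Proposition~\ref{prop:Blow-down local model}(5).

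To conclude I would check the cohomological relation in the first part of Theorem~\ref{thm:Blow-down}: since $\Pi$ is a diffeomorphism away from the codimension-two exceptional divisors, $\Pi^*[\omega]$ can differ from $[\tilde\omega]$ only by a combination of the Poincar\'e duals $e_j$ of the exceptional classes $E_j=[\psi_j(\mathcal{L}_j(0))]$, and pairing $\omega$ against a generator near each divisor fixes the coefficient at $\lambda_j^2$, exactly as in Proposition~\ref{prop:Holomorphic blow-up}; the multi-ball case then follows by performing the construction simultaneously in the disjoint neighborhoods $\psi_j(\mathcal{L}_j(r_j))$.

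The step I expect to be the main obstacle is verifying that the glued data --- the form $\omega$, the submanifold $L$, and the structure $J$ --- are smooth and mutually compatible across the annular overlaps; this is precisely what Proposition~\ref{prop:Blow-down local model} is designed to supply. Its clauses $\pi^*\tau=\rho(\delta,\lambda)$ outside $\mathcal{L}(1+\epsilon)$ and $\tau=\lambda^2\omega_0$ near the origin make $\omega$ well-defined and symplectic, $i$-compatibility yields the taming of $J$, and the condition $c^*\tau=-\tau$ (equivalently, $\mathbb{R}^n$ is $\tau$-Lagrangian) is what lets $B_{\mathbb{R},j}$ be glued in compatibly with $\mathcal{R}_j$ so that $L$ is a smooth Lagrangian. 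Once these local compatibilities are in hand, the rest is bookkeeping parallel to the blow-up.
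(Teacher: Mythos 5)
Your proposal is correct and follows essentially the same route as the paper: cut out $\psi_j(\mathcal{L}_j(r_j))$, glue in balls along the boundary via $\pi$, define $\omega$ and $J$ piecewise from $\tilde\omega$, $\tilde J$ and the local model $\tau$ of Proposition \ref{prop:Blow-down local model}, and check agreement on the overlap annulus via $\pi^*\tau=\rho(\delta,\lambda)=\tilde\psi^*\tilde\omega$. The paper's proof does exactly this for $(p,q)=(1,0)$ and extends ball-by-ball, just as you propose.
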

\begin{proof}
  We consider the case when $(p,q)=(1,0)$. Let
  \begin{equation*} 
\tilde{\psi}:(\mathcal{L}(1+2\epsilon_0),\rho(\delta,\lambda),\mathcal{R}
(1+2\epsilon_0))
  \to (\tilde{M},\tilde{\omega},\tilde{L})
  \end{equation*}
  be a relative symplectic
  embedding such that
  $\tilde{\psi}^{*}\tilde{\omega}=\rho(\delta,\lambda)$.  We then perform a
  local complex blow down in $\mathcal{L}(1+2\epsilon)$, and we define
  the manifold $M$ by
 \begin{equation*}M:=\tilde{M}\backslash\tilde{\psi}(\mathcal{L}(1+2\epsilon))
  \cup_{\tilde{\psi} \circ \pi^{-1}|{\partial
      \mathcal{L}(1+2\epsilon)}}B(1+2\epsilon)\end{equation*} 
after which, as in the
  blow-up, we arrive at the commutative diagram \begin{equation}
    \xymatrix{(\mathcal{L}(1+2\epsilon),\mathcal{R}(1+2\epsilon))
      \ar[d]_{\pi} \ar@{^{(}->}[r]^(.67){\tilde{\psi}} & 
      (\tilde{M},\tilde{L})\ar[d]^{\Pi}\\
      (B(1+2\epsilon),B_{\mathbb{R}}(1+2\epsilon)) \ar@{^{(}->}[r]_(.67){\psi} &
      (M,L)}
  \end{equation} where $\Pi$ is defined by \[ \Pi(x)=\begin{cases}
    x & x\in\tilde{M}\backslash\tilde{\psi}(\mathcal{L}(1+2\epsilon))\\
    \psi\circ\pi\circ(\tilde{\psi}^{-1}) &
    x\in\tilde{\psi}(\mathcal{L}(1+2\epsilon)).\end{cases}\] We now
  define the following form on $M$:\[ \omega=\begin{cases}
    (\Pi^{-1})^{*}\tilde{\omega} & \mbox{on }M\backslash\psi(B(1+\epsilon))\\
    (\psi^{-1})^{*}\tau(\epsilon,\delta,\lambda) & \mbox{on
    }\psi(B(1+2\epsilon)).\end{cases}\] We check that the definition of $\omega$ agrees on $\psi(B(1+2\epsilon))\backslash \psi(B(1+\epsilon))$. On this region, we
  have \begin{align*}
    \omega = & (\psi^{-1})^{*}\tau(\epsilon,\delta,\lambda)\\
    = & (\psi^{-1})^{*}(\pi^{-1})^{*}\rho(1,\lambda)\\
    = & (\psi^{-1})^{*}\pi^{*}\tilde{\psi}^{*}\tilde{\omega} =
(\Pi^{-1})^{*}\tilde{\omega},\end{align*} so $\omega$ is
  well defined. Furthermore, we claim that $\omega$ is a symplectic
  form. Too see this, note that $\Pi$ is a diffeomorphism on
  $\Pi^{-1}(M\backslash\psi(B(1+\epsilon)))$, so $\omega^n$ is a volume form on $M\backslash \psi(B(1+\epsilon))$, and $\omega$ is therefore non-degenerate there. It is closed by
  definition. For $\psi(B(1+2\epsilon))$, we first note that by Proposition \ref{prop:Blow-down local model}, $\tau$
  is K\"ahler, and therefore symplectic on $\mathbb{R}^{2n}$. Since $\psi^{-1}$ is a
  diffeomorphism on $B(1+2\epsilon)$, $\omega$ is non-degenerate here
  as well, and closed by definition. 

 We define the almost complex structure $J$ on $M$ by 
  \begin{equation*}
  J=\begin{cases}\psi_*i\psi^{-1}_* & \text{ on } Im(\psi)\\
                        \Pi_* \tilde{J}\Pi^{-1}_*                         & \text{ on } M\backslash Im(\psi)
\end{cases} 
\end{equation*}

  Note that since $\pi$ and $\psi$ are holomorphic diffeomorphisms near the boundary of their respective domains, $\psi_*i\psi^{-1}_*= \Pi_* \tilde{J}\Pi^{-1}_*$ on $\psi(1+2\epsilon)\backslash \psi(1+\epsilon)$, and so $J$ is well defined.
  To see that $\omega$ tames (is compatible with) $J$,
  we first note that $\Pi$ is holomorphic and a diffeomorphism for $x\in
  \tilde{M}-\mathcal{L}(1+\epsilon)$, and we recall that
  $\omega=(\Pi^{-1})^{*}\tilde{\omega}$ on $M\backslash B(1+\epsilon)$. Therefore, if
  $\tilde{\omega}$ tames $J$, then for $v,w\in T_{\Pi(x)}M$,
  $\omega(v,Jv)=\tilde{\omega}(\Pi^{-1}_{*}v,\Pi^{-1}_{*}\tilde{J}v)=\tilde{\omega}(\Pi^{-1}_{*}v,J\Pi^{-1}_{*}v)>0$,
  so $\omega$ tames $J$ on $M\backslash \psi(B(1+\epsilon)$. If, in
  addition, $\tilde{\omega}$ is compatible with $\tilde{J}$, then on $M\backslash B(1+\epsilon)$, we have
  \begin{align*}
    \omega(Jv,Jw) = & (\Pi^{-1})^*\tilde{\omega}(Jv,Jw)\\
    = & \tilde{\omega}(\Pi^{-1}_*Jv,\Pi^{-1}_*Jw)\\
    = & \tilde{\omega}(\tilde{J}\Pi^{-1}_* v,\tilde{J}\Pi^{-1}_* w) =
\tilde{\omega}(\Pi^{-1}_* v,\Pi^{-1}_* w) = (\Pi^{-1})^*\tilde{\omega}(v,w)
  \end{align*} as desired.
 
  For $x\in\mathcal{L}(1+\epsilon)$, we have that
  $\omega=(\psi^{-1})^*\tau$. Since
  $\tau$ is compatible with $i$, the canonical complex
  structure on $B(1+2\epsilon)$, and $\psi$ is holomorphic (tautologically, by the definition of $J$), then
  $\omega$ is compatible with $J$ on this
  region. Therefore, if $\tilde{\omega}$ tames (is compatible with) $\tilde{J}$ on
  $\tilde{M}$, then $\omega$ tames (is compatible with) $J$ on
  $M$.

The condition on the cohomology class of $\omega$ follows immediately from the construction. This completes the proof of the proposition.
\end{proof}

We now construct the real blow-down for a real symplectic manifold
$\tilde{M}$.

\begin{prop}
  \label{prop:Real blow-down}
 Let $(\tilde{M},\tilde{\omega}, \tilde{\phi})$ be a real symplectic manifold and let $\tilde{L}=Fix(\tilde{\phi})$. Let $\tilde{J}$ be an $\tilde{\omega}$-tame (compatible) almost complex structure on $\tilde{M}$. Suppose that
    \begin{equation*}\tilde{\psi}:\coprod_{j=1}^{k}(\mathcal{L}_j(r_j),\rho_j(\delta_j,\lambda_j),\mathcal{R}_j(r_j),i)\hookrightarrow(\tilde{M},\tilde{\omega},\tilde{L},\tilde{J})
\end{equation*} is a symplectic and holomorphic embedding
    such that 
\begin{enumerate}
\item $\psi^{-1}(\tilde{L})=\coprod_{j=1}^{k}\mathcal{R}_j(r_j)$,
\item $Im(\tilde{\psi})=Im(\tilde{\phi} \circ \tilde{\psi})$, 
\item $Im(\tilde{\phi}\circ \tilde{\psi_i})\cap Im(\tilde{\psi_i})= \emptyset$ if $Im(\psi_i)\cap L=\emptyset$, and
\item    $\tilde{\psi}_i \circ \tilde{c}=\tilde{\phi} \circ \tilde{\psi}_i$ if $Im(\tilde{\psi}_i)\cap \tilde{L}
    \neq \emptyset$. 
\end{enumerate}

Then the conclusions of the second part of Theorem \ref{thm:Blow-down} are satisfied.
\end{prop}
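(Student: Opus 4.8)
The plan is to run the blow-down analogue of the proof of Proposition~\ref{prop:Real blow-up}. First I would partition the components of $\tilde\psi$ into those $\tilde\psi_j$ whose image meets $\tilde L$ and the pairs $\tilde\psi_i,\tilde\psi_{i'}$ with $\mathrm{Im}(\tilde\phi\circ\tilde\psi_i)=\mathrm{Im}(\tilde\psi_{i'})$ and both images disjoint from $\tilde L$ — by hypotheses (2) and (3) every component off $\tilde L$ has such a partner — and then blow each piece down separately, using Proposition~\ref{prop:Mixed blow-down} in place of Proposition~\ref{prop:Holomorphic blow-up}. In each case the new content, relative to the already-established mixed blow-down, is the construction of a real structure $\phi$ on the resulting $M$ with $\phi\circ\Pi=\Pi\circ\tilde\phi$; this amounts to proving blow-down analogues of Lemmas~\ref{lem:Real blow-up one ball} and \ref{lem:Real blow-up two balls}.

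For a single $\tilde\psi_j$ meeting $\tilde L$, I would first note that $\tilde\psi_j$ is a relative embedding, since $\tilde\psi_j^{-1}(\tilde L)=\mathrm{Fix}(\tilde c)\cap\mathcal L_j(r_j)=\mathcal R_j(r_j)$, so Proposition~\ref{prop:Mixed blow-down} with $(p,q)=(1,0)$ produces the blow-down $(M,\omega)$, an $\omega$-tame (compatible) $J$, and the onto map $\Pi$. I would then define $\phi$ on $M$ by $\Pi\circ\tilde\phi\circ\Pi^{-1}$ away from the glued ball and by $\psi\circ c\circ\psi^{-1}$ on $\psi(B(1+2\epsilon))$ (with $c$ complex conjugation), and check that the two agree on the collar $\psi(B(1+2\epsilon))\setminus\psi(B(1+\epsilon))$ by the same diagram chase used in Lemma~\ref{lem:Real blow-up one ball}, now with commutativity of the blow-down square, the equivariance $\tilde\psi_j\circ\tilde c=\tilde\phi\circ\tilde\psi_j$, and $\pi\circ\tilde c=c\circ\pi$. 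Since both pieces are anti-symplectic involutions on their regions — the second because $c^*\tau=-\tau$ and $\mathbb R^n$ is $\tau$-Lagrangian by Proposition~\ref{prop:Blow-down local model}(4)--(5), together with $\psi^*\omega=\tau$ — this yields an anti-symplectic involution $\phi$ with $\phi\circ\Pi=\Pi\circ\tilde\phi$, hence $\mathrm{Fix}(\phi)=\Pi(\tilde L)=:L$. For the compatibility $\phi_*J=-J\phi_*$ I would argue exactly as in the two computations of Lemma~\ref{lem:Real blow-up one ball}, after first arranging (if needed) that $\tilde J$ be $\tilde\phi$-anti-invariant via Corollary~\ref{cor:Existence of sym acs}, normalized near $\tilde\psi_j(\mathcal L_j(0))$ so as to remain holomorphic there as in Proposition~\ref{prop:Family of forms}.

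For a pair $\tilde\psi_i,\tilde\psi_{i'}$ mapped off $\tilde L$, I would follow Lemma~\ref{lem:Real blow-up two balls}: replace $\tilde\psi|_{\mathcal L_i\sqcup\mathcal L_{i'}}$ by the embedding equal to $\tilde\psi_i$ on $\mathcal L_i$ and to $\tilde\phi\circ\tilde\psi_i\circ\tilde c\circ\tilde\iota$ on $\mathcal L_{i'}$, where $\tilde\iota$ swaps the two copies; this has the same image, is symplectic and holomorphic, and intertwines $\tilde c\circ\tilde\iota$ with $\tilde\phi$. Blowing this pair down with Proposition~\ref{prop:Mixed blow-down} (these balls contributing to the $q$-part, with no Lagrangian constraint), I would define $\phi$ by $\Pi\circ\tilde\phi\circ\Pi^{-1}$ off the glued balls and by $\psi'\circ(c\circ\iota)\circ(\psi')^{-1}$ on them, and verify well-definedness by the analogous diagram chase with $\pi\circ\tilde c\circ\tilde\iota=c\circ\iota\circ\pi$; the compatibility $\phi_*J=-J\phi_*$ follows as before. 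Running these two constructions simultaneously over the (disjoint) images and reading off the class $[\omega]$ from Proposition~\ref{prop:Mixed blow-down} then gives the conclusions of the second part of Theorem~\ref{thm:Blow-down}.

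The hard part will be the bookkeeping that makes the piecewise-defined $\phi$ a single smooth anti-symplectic involution intertwined with $\tilde\phi$ by $\Pi$: the collar-matching identities for the different pieces must be mutually consistent and compatible with the disjointness hypotheses (2)--(3), and — if the claimed conclusion includes the a.c.s.\ compatibility of $(M,\omega,\phi)$ — one must first pass to a $\tilde\phi$-anti-invariant $\tilde J$ that is still holomorphic near each $\tilde\psi_j(\mathcal L_j(0))$, so that $J$ descends to $M$ and Proposition~\ref{prop:Mixed blow-down} applies verbatim.
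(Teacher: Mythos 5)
Your proposal is correct and follows essentially the same route as the paper: the paper's proof simply invokes Lemmas \ref{lem:Real blow-down one ball} and \ref{lem:Real blow-down two balls} (the blow-down analogues you propose to prove), whose proofs are exactly your piecewise definition of $\phi$ via $\Pi\circ\tilde\phi\circ\Pi^{-1}$ and $\psi\circ c\circ\psi^{-1}$ (resp.\ $c\circ\iota$ for the paired balls off $\tilde L$), the collar diagram chase, and the two computations giving $\phi_*J=-J\phi_*$. Your remark about first arranging $\tilde\phi$-anti-invariance of $\tilde J$ is a sensible precaution that the paper leaves implicit, but it does not change the argument.
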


As in the blow-up, we prove this in two parts. The first is the following.

\begin{lem}
\label{lem:Real blow-down one ball}
    Let $(\tilde{M},\tilde{\omega}, \tilde{\phi})$ be a real symplectic manifold and let $\tilde{L}=Fix(\tilde{\phi})$. Let $\tilde{J}$ be an $\tilde{\omega}$-tame (compatible) almost complex structure on $\tilde{M}$, and suppose that
    $\tilde{\psi}:(\mathcal{L}(r),\rho(\delta,\lambda),\mathcal{R}(r))\hookrightarrow(\tilde{M},\tilde{\omega},\tilde{L})$ is a symplectic embedding
    such that $\tilde{\psi}\circ c=\tilde{\phi}\circ \tilde{\psi}$.
    Then the blow-down $(M,\omega,L)$ admits an anti-symplectic
    involution $\phi$ and an almost complex structure $J$ such that $Fix(\phi)=L$ and $\phi_* J \phi_*=-J$.
\end{lem}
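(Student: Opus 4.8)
The plan is to follow the template of Lemma~\ref{lem:Real blow-up one ball}, run in reverse: first apply the relative blow-down construction of Proposition~\ref{prop:Mixed blow-down}, then build the real structure on $M$ by gluing the two evident local involutions. To begin, I would observe that $\tilde{\psi}$ is in fact a \emph{relative} embedding: by hypothesis $\tilde{\psi}\circ\tilde{c}=\tilde{\phi}\circ\tilde{\psi}$, where $\tilde{c}$ is complex conjugation on $\mathcal{L}$, so $\tilde{\psi}^{-1}(\tilde{L})=\tilde{\psi}^{-1}(Fix(\tilde{\phi}))=Fix(\tilde{c})=\mathcal{R}(r)$, and hence Proposition~\ref{prop:Mixed blow-down} applies with $(p,q)=(1,0)$ (taking $\tilde{\psi}$ holomorphic, as in that proposition's hypotheses). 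This yields the blow-down $(M,\omega,L)$, the onto map $\Pi:\tilde{M}\to M$, an $\omega$-tame (compatible) almost complex structure $J$, and the commutative square relating $\tilde{\psi}$, $\psi$, the complex blow-down map $\pi$, and $\Pi$, in which $\omega=(\Pi^{-1})^{*}\tilde{\omega}$ on $M\setminus\psi(B(1+\epsilon))$ and $\omega=(\psi^{-1})^{*}\tau$ on $\psi(B(1+2\epsilon))$, where $\tau=\tau(\epsilon,\delta,\lambda)$ is the local model of Proposition~\ref{prop:Blow-down local model}. I would also record the identity $\pi\circ\tilde{c}=c\circ\pi$, with $c$ complex conjugation on $\mathbb{C}^{n}$, valid since $z\in l\iff\bar{z}\in\bar{l}$ and $\bar{0}=0$.

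Next I would define $\phi:M\to M$ by
\[
\phi(y)=\begin{cases}\Pi\circ\tilde{\phi}\circ\Pi^{-1}(y),& y\in M\setminus\psi(B(1+\epsilon)),\\ \psi\circ c\circ\psi^{-1}(y),& y\in\psi(B(1+2\epsilon)).\end{cases}
\]
The one substantive step is to check that the two formulas agree on the overlap $\psi(B(1+2\epsilon))\setminus\psi(B(1+\epsilon))$: there $\Pi^{-1}$ restricts to $\tilde{\psi}\circ\pi^{-1}\circ\psi^{-1}$, and chaining $\tilde{\phi}\circ\tilde{\psi}=\tilde{\psi}\circ\tilde{c}$, then $\Pi\circ\tilde{\psi}=\psi\circ\pi$, then $\pi\circ\tilde{c}\circ\pi^{-1}=c$, collapses $\Pi\circ\tilde{\phi}\circ\Pi^{-1}$ to $\psi\circ c\circ\psi^{-1}$ on that annulus. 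Hence $\phi$ is a well-defined diffeomorphism, and it is an anti-symplectic involution because each branch is one: $\tilde{\phi}^{*}\tilde{\omega}=-\tilde{\omega}$ gives $(\Pi\circ\tilde{\phi}\circ\Pi^{-1})^{*}\omega=-\omega$ off the ball, while $c^{*}\tau=-\tau$ from Proposition~\ref{prop:Blow-down local model} gives $(\psi\circ c\circ\psi^{-1})^{*}\omega=-\omega$ on the ball, and both branches square to the identity (note $\tilde{\phi}$ preserves each $\tilde{\psi}(\mathcal{L}(s))$, so the two regions are invariant under the respective branches). For the fixed-point set, off the ball $Fix(\phi)=\Pi(Fix(\tilde{\phi}))=\Pi(\tilde{L})=L$, and on the ball $Fix(\phi)=\psi(Fix(c))=\psi(B_{\mathbb{R}}(1+2\epsilon))=L\cap\psi(B(1+2\epsilon))$ since $\psi$ is a relative embedding; so $Fix(\phi)=L$.

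Finally I would verify $\phi_{*}J\phi_{*}=-J$ by the same split as in Lemma~\ref{lem:Real blow-up one ball}. Off the ball, where $J=\Pi_{*}\tilde{J}\Pi^{-1}_{*}$, one has $\phi_{*}J=\Pi_{*}\tilde{\phi}_{*}\Pi^{-1}_{*}J=\Pi_{*}\tilde{\phi}_{*}\tilde{J}\Pi^{-1}_{*}=-\Pi_{*}\tilde{J}\tilde{\phi}_{*}\Pi^{-1}_{*}=-J\phi_{*}$, using $\tilde{\phi}_{*}\tilde{J}=-\tilde{J}\tilde{\phi}_{*}$. On the ball, where $J=\psi_{*}i\psi^{-1}_{*}$, one has $\phi_{*}J=\psi_{*}c_{*}\psi^{-1}_{*}J=\psi_{*}c_{*}i\psi^{-1}_{*}=-\psi_{*}i\,c_{*}\psi^{-1}_{*}=-J\phi_{*}$, using that $c$ is anti-holomorphic, $c_{*}i=-ic_{*}$. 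The main --- indeed essentially the only --- obstacle is the overlap identity $\Pi\circ\tilde{\phi}\circ\Pi^{-1}=\psi\circ c\circ\psi^{-1}$ above, i.e.\ correctly assembling the commutative square together with the equivariance $\tilde{\psi}\circ\tilde{c}=\tilde{\phi}\circ\tilde{\psi}$ and the conjugation identity $\pi\circ\tilde{c}=c\circ\pi$; once that is in place the rest is a routine transcription of the blow-up argument.
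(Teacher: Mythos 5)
Your proposal is correct and follows essentially the same route as the paper: construct the blow-down via Proposition \ref{prop:Mixed blow-down}, glue $\Pi\circ\tilde{\phi}\circ\Pi^{-1}$ with $\psi\circ c\circ\psi^{-1}$, verify agreement on the overlap annulus using the commutative square together with $\tilde{\psi}\circ\tilde{c}=\tilde{\phi}\circ\tilde{\psi}$ and $\pi\circ\tilde{c}=c\circ\pi$, and check the anti-symplectic and anti-holomorphic properties branch by branch. Your added verification that $Fix(\phi)=L$ is a small but welcome supplement that the paper's own proof leaves implicit.
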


\begin{proof}
  Construct the blow-down $(M,\omega)$ as in Proposition
  \ref{prop:Mixed blow-down}. 
  Now define a map $\phi$ by
  \begin{equation*}
    \phi(x)=\begin{cases}
      \Pi\circ\tilde{\phi}\circ\Pi^{-1} & x\in M\backslash\psi(B(1+\epsilon))\\
      \psi\circ c \circ \psi^{-1}(x) &
      x\in\psi(B(1+2\epsilon))\end{cases},
  \end{equation*}
  Note that, for
  $x\in\psi(B(1+2\epsilon)-B(1+\epsilon)),$ \begin{align*}
    \psi\circ c\circ\psi^{-1}(x) = &
\psi\circ\pi\circ\tilde{c}\circ\pi^{-1}\circ\psi^{-1}(x)\\
    = &
\Pi\circ\tilde{\psi}\circ\tilde{c}\circ\tilde{\psi}^{-1}\circ\Pi^{-1}(x)\\
    = & \Pi\circ\tilde{\phi}\circ\Pi^{-1}(x),\end{align*} so the map
  $\phi$ is well-defined and a diffeomorphism. Furthermore,
  $\phi^{2}=Id$ by definition. To see that $\phi^{*}\omega=-\omega,$
  we have, for $x\in
  M\backslash\psi(B(1+2\epsilon))$,\begin{align*}
    \phi^{*}\omega_{x} & = & \phi^{*}(\Pi^{-1})^{*}\tilde{\omega}_{x}\\
    = & (\Pi^{-1})^{*}\tilde{\phi}^{*}\tilde{\omega}_{x}\\
    = & -(\Pi^{-1})\tilde{\omega}_{x}
      =  -\omega_{x},\end{align*} and for
  $x\in\psi(B(1+2\epsilon))$, we have\begin{align*}
    \phi^{*}\omega_{x} = & (\psi^{-1})^{*}c^{*}
\psi^{*}(\psi^{-1})^{*}\tau(\epsilon,\delta,\lambda)\\
    = & (\psi^{-1})^{*}c^{*}\tau(\epsilon,\delta,\lambda)\\
    = & -(\psi^{-1})^{*}\tau(\epsilon,\delta,\lambda)
      =  -\omega_{x},\end{align*} 

We now check that $\phi_* J \phi_*=-J$.  For $x\in M\backslash \psi(B(1+\epsilon))$, we compute
  \begin{align*}
    \phi_*J = & \Pi_*\tilde{\phi}_*\Pi^{-1}_*J\\
    = & \Pi_*\tilde{\phi}_*\tilde{J}\Pi^{-1}_*\\
    = & -\Pi_*\tilde{J}\tilde{\phi}_*\Pi^{-1}_*\\
    = & -J\Pi^{-1}_*\phi_* \Pi_*
      = -J \phi_*.
  \end{align*}
  For $x\in \psi(B(1+2\epsilon))$, we have
  \begin{align*}
    \phi_* J = & \psi_*c_*\psi^{-1}_*J\\
    = & \psi_*c_*i\psi^{-1}_*\\
    = & -\psi_*ic_*\psi^{-1}_*\\
    = & -J\psi_*c_*\psi^{-1}_*
      = -J\phi_*,
  \end{align*}
  which completes the proof.
\end{proof}
\begin{lem}
\label{lem:Real blow-down two balls}
    Let $(\tilde{M},\tilde{\omega}, \tilde{\phi})$ be a real symplectic manifold and let $\tilde{L}=Fix(\tilde{\phi})$. Suppose that
    $\tilde{\gamma}:\coprod_{j=1}^2(\mathcal{L}_j(r_j),\rho_j(\delta_j,\lambda_j),\mathcal{R}(r))\hookrightarrow(\tilde{M},\tilde{\omega},\tilde{L})$ is a symplectic embedding
    such that $\psi^{-1}(\tilde{L})=\emptyset$ and $Im(\tilde{\phi} \circ \tilde{\gamma}_1)=Im(\tilde{\gamma}_2)$.
    Then the blow-down $(M,\omega)$ admits an anti-symplectic
    involution $\phi$.
\end{lem}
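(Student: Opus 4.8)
The plan is to imitate the two-ball blow-up of Lemma~\ref{lem:Real blow-up two balls}, now reading diagram~(\ref{eq:Blow-up/down diagram}) from top to bottom rather than bottom to top. First I would symmetrize the embedding. Let $\tilde{\iota}:\coprod_{i=1}^2\mathcal{L}_i\to\coprod_{i=1}^2\mathcal{L}_i$ be the swap $\tilde{\iota}((z,l)\in\mathcal{L}_i)=(z,l)\in\mathcal{L}_{i+1\bmod 2}$, let $\tilde{c}$ be complex conjugation on $\mathcal{L}$, and let $\iota$ and $c$ be the corresponding swap and conjugation on $\coprod_{i=1}^2 B_i$. Since $\tilde{\phi}$ is anti-symplectic and $\tilde{\gamma}_j^{*}\tilde{\omega}=\rho_j(\delta_j,\lambda_j)$, the equality $\mathrm{Im}(\tilde{\phi}\circ\tilde{\gamma}_1)=\mathrm{Im}(\tilde{\gamma}_2)$ forces the two copies to carry a common local model $\rho(\delta,\lambda)$, i.e. $\delta_1=\delta_2=:\delta$ and $\lambda_1=\lambda_2=:\lambda$. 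I would then define $\tilde{\psi}:\coprod_{i=1}^2(\mathcal{L}_i(r),\rho(\delta,\lambda),\mathcal{R}_i(r))\to(\tilde{M},\tilde{\omega},\tilde{L})$ to equal $\tilde{\gamma}_1$ on $\mathcal{L}_1$ and $\tilde{\phi}\circ\tilde{\gamma}\circ\tilde{c}\circ\tilde{\iota}$ on $\mathcal{L}_2$. Because $\tilde{c}$ is anti-holomorphic with $\tilde{c}^{*}\rho(\delta,\lambda)=-\rho(\delta,\lambda)$ (the Corollary following Definition~\ref{defn:Notation}) and $\tilde{\phi}$ is anti-holomorphic and anti-symplectic, $\tilde{\psi}$ is again a symplectic embedding with $\mathrm{Im}(\tilde{\psi})=\mathrm{Im}(\tilde{\gamma})$ and $\tilde{\psi}^{-1}(\tilde{L})=\emptyset$. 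A short check identical to the one in Lemma~\ref{lem:Real blow-up two balls} then shows that $\tilde{c}\circ\tilde{\iota}$ is an anti-symplectic involution on $\coprod_i\mathcal{L}_i$ and that $\tilde{\psi}\circ\tilde{c}\circ\tilde{\iota}=\tilde{\phi}\circ\tilde{\psi}$.

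Next I would blow down. Applying Proposition~\ref{prop:Mixed blow-down} to $\tilde{\psi}$ with $(p,q)=(0,2)$ (as in the proof of Lemma~\ref{lem:Real blow-down one ball}) yields $(M,\omega)$, the projection $\Pi:\tilde{M}\to M$, and a ball embedding $\psi:\coprod_{i=1}^2 B_i(1+2\epsilon)\hookrightarrow M$ fitting into the commutative square of diagram~(\ref{eq:Blow-up/down diagram}), with $\omega=(\Pi^{-1})^{*}\tilde{\omega}$ outside the balls and $\omega=(\psi^{-1})^{*}\nu$ on $\psi(\coprod_i B_i(1+2\epsilon))$, where $\nu$ restricts to the blow-down local model $\tau(\epsilon,\delta,\lambda)$ of Proposition~\ref{prop:Blow-down local model} on each $B_i$. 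As before, $\pi\circ\tilde{c}=c\circ\pi$ and $\pi\circ\tilde{\iota}=\iota\circ\pi$ give $\pi\circ\tilde{c}\circ\tilde{\iota}=c\circ\iota\circ\pi$. I would then set
\[
\phi(x)=\begin{cases}
\Pi\circ\tilde{\phi}\circ\Pi^{-1}(x), & x\in M\setminus\psi\big(\coprod_i B_i(1+\epsilon)\big),\\
\psi\circ c\circ\iota\circ\psi^{-1}(x), & x\in\psi\big(\coprod_i B_i(1+2\epsilon)\big),
\end{cases}
\]
and verify agreement on the overlap, which lies away from the centers of the $B_i$ so that $\pi$ and $\Pi$ restrict to diffeomorphisms there: combining $\Pi\circ\tilde{\psi}=\psi\circ\pi$, $\pi\circ\tilde{c}\circ\tilde{\iota}=c\circ\iota\circ\pi$, and $\tilde{\psi}\circ\tilde{c}\circ\tilde{\iota}=\tilde{\phi}\circ\tilde{\psi}$ gives $\psi\circ c\circ\iota\circ\psi^{-1}=\Pi\circ\tilde{\psi}\circ\tilde{c}\circ\tilde{\iota}\circ\tilde{\psi}^{-1}\circ\Pi^{-1}=\Pi\circ\tilde{\phi}\circ\Pi^{-1}$, so $\phi$ is a well-defined diffeomorphism.

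It remains to check that $\phi$ is an anti-symplectic involution. That $\phi^{2}=\mathrm{Id}$ follows from $\tilde{\phi}^{2}=\mathrm{Id}$ and from $c\circ\iota$ being an involution on $\coprod_i B_i$. For $\phi^{*}\omega=-\omega$: outside the balls $\phi^{*}\omega=(\Pi^{-1})^{*}\tilde{\phi}^{*}\tilde{\omega}=-(\Pi^{-1})^{*}\tilde{\omega}=-\omega$, and on $\psi(\coprod_i B_i(1+2\epsilon))$ one gets $\phi^{*}\omega=(\psi^{-1})^{*}(c\circ\iota)^{*}\nu=-(\psi^{-1})^{*}\nu=-\omega$, using $c^{*}\tau=-\tau$ from Proposition~\ref{prop:Blow-down local model} and that $\iota^{*}$ fixes $\nu$ because both balls carry the same form $\tau(\epsilon,\delta,\lambda)$. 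Running the same computation as in Lemma~\ref{lem:Real blow-up two balls} on the induced almost complex structure $J$ on $M$ moreover yields $\phi_*J\phi_*=-J$, if that is wanted. The step I expect to need the most care is the symmetrization in the first paragraph: one must confirm that $\tilde{\psi}$ is genuinely an embedding with image exactly $\mathrm{Im}(\tilde{\gamma})$, that the parameter pairs $(\delta_j,\lambda_j)$ really coincide — this is precisely where anti-symplecticity of $\tilde{\phi}$ enters, and equality is needed again for $\iota^{*}\nu=\nu$ — and that $\tilde{c}\circ\tilde{\iota}$ squares to the identity; once these are in place, the rest is a routine diagram chase exactly parallel to Lemmas~\ref{lem:Real blow-up two balls} and~\ref{lem:Real blow-down one ball}.
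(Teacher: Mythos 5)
Your proposal is correct and follows essentially the same route as the paper: the paper likewise symmetrizes $\tilde{\gamma}$ to $\tilde{\psi}$ (equal to $\tilde{\gamma}_1$ on $\mathcal{L}_1$ and to $\tilde{\phi}\circ\tilde{\gamma}_1\circ\tilde{c}\circ\tilde{\iota}$ on $\mathcal{L}_2$), observes that $\tilde{c}\circ\tilde{\iota}$ is a real structure making $\tilde{\psi}$ a real map, and then runs the one-ball blow-down argument with $\tilde{c}\circ\tilde{\iota}$ in place of $\tilde{c}$. Your expansion of that last step, including the remark that anti-symplecticity forces the local-model parameters on the two components to agree, is exactly the verification the paper leaves implicit.
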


\begin{proof}
   Since $Im(\tilde{\phi} \circ \tilde{\gamma}_1)=Im(\tilde{\gamma}_2)$, we can replace $\gamma$ with an embedding
\begin{equation*}\tilde{\psi}:\coprod_{j=1}^{2} ( \mathcal{L}_j ( r_j ), \rho_j(\delta_j,\lambda_j),\mathcal{R}(r))\hookrightarrow(\tilde{M},\tilde{\omega},\tilde{L})\end{equation*}
 
defined by
\begin{equation*}
\tilde{\psi}= \begin{cases} \tilde{\gamma}_1(x) & x\in \mathcal{L}_1 \\
                            \tilde{\phi} \circ \tilde{\gamma}_1 \circ \tilde{c}\circ \tilde{\iota}(x) & x\in \mathcal{L}_2,
\end{cases} 
\end{equation*}
where $\tilde{\iota}:\coprod_{j=1}^2 \mathcal{L}_j\to \coprod_{j=1}^2 \mathcal{L}_j$ is given by $\tilde{\iota}(x\in \mathcal{L}_{j})=x\in \mathcal{L}_{j+1 \mod 2}$. Note that $\tilde{c}\circ \tilde{\iota}$ is a real structure on $\coprod_{j=1}^2\mathcal{L}_j$ which makes $\tilde{\psi}$ a real map. The proof now follows exactly the proof of Lemma \ref{lem:Real blow-down one ball}, with $\tilde{c}\circ \tilde{\iota}$ in place of $\tilde{c}$.
\end{proof}

\begin{proof}[Proof of Proposition \ref{prop:Real blow-down}]
  For each $\tilde{\psi}_j$ with $Im(\tilde{\psi}_j)\cap L\neq \emptyset$, we construct the blow-down as in \ref{lem:Real blow-down one ball}. The rest of the maps come in pairs by assumption, and for each pair, we construct the blow-down as in \ref{lem:Real blow-down two balls}. The Proposition follows.
\end{proof}

Theorem \ref{thm:Blow-down} now follows easily from the above
propositions. We finish the proof here.

\begin{proof}[Proof of Theorem \ref{thm:Blow-down}]
  First, by Remark \ref{rem:Locally integrable acs}, there is an $\epsilon^{'}>0$, $\epsilon{'} < \epsilon$, and an $\tilde{\omega}$-tame almost complex structure $\tilde{J}$ such that $\tilde{J}$ is integrable on $\psi_i(\mathcal{L}(1+2\epsilon{'}))$ and which makes $\psi_i|_{(\mathcal{L}_{i}(1+2\epsilon{'})}$ holomorphic. Define $\mathcal{N}:=\coprod_{i=1}^k \mathcal{L}_i(1+2\epsilon^{'})$. If $M$ is not a real manifold, then we use
  Proposition \ref{prop:Mixed blow-down} to blow down $\tilde{M}$ using the map $\psi|_{\mathcal{N}}$. For a real manifold
  $\tilde{M}$ and a real embedding $\tilde{\psi}$, the theorem then
  follows from Proposition \ref{prop:Real blow-down}, again using the restriction $\psi|_{\mathcal{N}}$. This completes the proof.
\end{proof}

\begin{rem}
  We should note that the forms obtained in the local models,
  i.e. Propositions \ref{prop:Blow-up local model} and
  \ref{prop:Blow-down local model} are not the same as the forms
  constructed, respectively, from blowing up $\mathbb{C}^{n}$ at $0$
  and blowing down $\mathcal{L}$ along the exceptional divisor using
  Theorems \ref{thm:Blow-up} and \ref{thm:Blow-down}.  Constructing
  the genuine blow-up and blow-down forms, even of $\mathbb{C}^{n}$
  and $\mathcal{L}$, still requires an auxiliary symplectic embedding
  of either $B(r)$ or $\mathcal{L}(r)$, and these are absent from the
  form constructions of $\tau$ and $\tilde{\tau}$ in Propositions
  \ref{prop:Blow-up local model} and \ref{prop:Blow-down local
    model}. Because of this, we still use the constructions of Theorems
  \ref{thm:Blow-up} and \ref{thm:Blow-down}, even in these
  cases. \end{rem}

\subsection{Invariant Symplectic Neighborhoods and the Moser
  Stability Theorem in Real Symplectic Manifolds}
\label{subsubsec:Moser}

In this section we present a version of the Symplectic Neighborhood
Theorem adapted to leave invariant the fixed-point set of a real
symplectic manifold $(M,\omega,\phi)$.  We will use this below to
establish real packing results in
$(\mathbb{C}P^{2},\mathbb{R}P^{2})$ and other real symplectic four-manifolds. We
closely follow the
presentation of the analogous theorems for symplectic manifolds with
no real structure in McDuff and Salamon
\cite{McDuff_Salamon_Intro_1998}.

We begin with a definition.
\begin{defn}
\label{defn:Equivariant vector field}
Let $M$ be a smooth manifold and let $G$ be a compact Lie group which acts smoothly on $M$. We say that a vector field $X$ on $M$ is \emph{equivariant} with respect to $G$ (or \emph{$G$-equivariant}) if $\forall x\in M,g\in G$, we have $X(gx)=g_*X(x)$. 
\end{defn}

We now give the following standard result in equivariant dynamics, which we
quote from Ortega and Ratiu \cite{Ortega_Ratiu_2004} (Proposition 3.3.2(i))

\begin{prop}
\label{prop:Equiv dynamics}
Let $M$ be a smooth manifold, $A$ a subgroup of the group of diffeomorphisms of $M$. Let $U$ be an $A$-invariant open subset of $M$, and $X$ an $A$-equivariant vector field defined on $U$. Then, the domain of definition $Dom(F_t)\subset U$ of the flow $F_t$ of $X$ is $A$-invariant and $F_t$ is itself $A$-equivariant.
\end{prop}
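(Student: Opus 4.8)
The plan is to reduce everything to the existence--uniqueness theorem for integral curves of a smooth vector field. First I would unwind the equivariance hypothesis: since $U$ is $A$-invariant, each $a\in A$ restricts to a diffeomorphism of $U$, and the condition $X(ax)=a_*X(x)$ for all $x\in U$ is precisely the assertion that $a_*X=X$ as vector fields on $U$, for every $a\in A$.

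Next I would show that $A$ carries integral curves to integral curves. Let $\gamma:I\to U$ be an integral curve of $X$, so $\dot\gamma(t)=X(\gamma(t))$ for $t\in I$, and fix $a\in A$. For the curve $a\circ\gamma:I\to U$ the chain rule gives
\begin{equation*}
\frac{d}{dt}(a\circ\gamma)(t)=a_*\big(\dot\gamma(t)\big)=a_*\big(X(\gamma(t))\big)=X\big(a\gamma(t)\big)=X\big((a\circ\gamma)(t)\big),
\end{equation*}
so $a\circ\gamma$ is again an integral curve of $X$, now with initial value $a\gamma(0)$. Because $a$ is a diffeomorphism of $U$, the curve $\gamma$ fails to extend past $I$ if and only if $a\circ\gamma$ does; hence $\gamma$ is the maximal integral curve through $\gamma(0)$ exactly when $a\circ\gamma$ is the maximal integral curve through $a\gamma(0)$, and in particular the maximal intervals of existence of the flow lines through $x$ and through $ax$ coincide.

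The proposition then follows. Fix $t$ and $x\in U$. By the preceding paragraph, $x$ lies in $Dom(F_t)$, i.e. the maximal integral curve through $x$ is defined at time $t$, if and only if the same holds for $ax$; thus $Dom(F_t)$ is $A$-invariant. Moreover, for $x\in Dom(F_t)$, both $s\mapsto a(F_s(x))$ and $s\mapsto F_s(ax)$ are integral curves of $X$ agreeing at $s=0$ with value $ax$, so by uniqueness of integral curves (valid since $X$ is smooth, hence locally Lipschitz) they coincide where both are defined; that is, $F_t(ax)=a(F_t(x))$, which is exactly the $A$-equivariance of $F_t$.

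I do not expect a genuine obstacle here. The only point requiring a little care is the bookkeeping about \emph{maximal} integral curves and their intervals of existence — needed so that $Dom(F_t)$, rather than merely some set where a local flow happens to be defined, is seen to be $A$-invariant; everything else is the chain rule together with Picard--Lindel\"of uniqueness.
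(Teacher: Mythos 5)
Your argument is correct. Note that the paper does not prove this proposition at all: it is quoted verbatim as a standard result from Ortega and Ratiu (Proposition 3.3.2(i)), so there is no in-paper proof to compare against. What you have written is the standard argument one would find in that reference — equivariance of $X$ means $a_*X=X$, so each $a\in A$ carries integral curves to integral curves by the chain rule; maximality is preserved because $a$ is invertible with $a^{-1}_*X=X$; and Picard--Lindel\"of uniqueness then forces $F_t(ax)=a(F_t(x))$ on the ($A$-invariant) domain $Dom(F_t)$ — and your care with maximal intervals of existence is exactly the point that needs attention. No gaps.
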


\begin{lem}
  \label{lem:Real Moser's actual trick}
  Let $(M,\omega,\phi)$ be a real symplectic manifold with
  $Fix(\phi)=L$, and suppose $\omega_t, t\in [0,1]$ is a smooth family
  of symplectic forms with $\omega_0=\omega$ and
  $\phi^*\omega_t=-\omega_t$. Suppose, furthermore, that there exists
  a family of one-forms $\sigma_t$ with
  $\frac{d}{dt}\omega_t=d\sigma_t$ and
  $\phi^{*}\sigma_t=-\sigma_t$. Then there exists a family of
  diffeomorphisms $\alpha_t:M\to M$ such that

  \begin{align}
   \alpha_{t}^{*}\omega_{t} &=  \omega_{0}, \notag \\
   \alpha_{t}(L) &\subseteq  L\label{eq:Constraints},\\
   \alpha_{t}\circ \phi &=  \phi\circ \alpha_{t}. \notag
  \end{align}
\end{lem}
\begin{proof}
  We first note that, since the $\omega_t$ are non-degenerate, there exists a unique vector field $X_t$ which satisfies
\begin{equation}
 \sigma_t+\iota(X_{t})\omega_{t}=0.\label{eq:Moser's
      trick}
\end{equation} Given such a vector field $X_t$, let $\alpha_t$ be the solutions of
  \begin{align}
    \frac{d}{dt}\alpha_{t} &= X_{t}\circ\alpha_{t}, \label{eq:Diffeq}\\
    \alpha_{0} &= Id. \notag
  \end{align}
We now note that, because $\omega_{t}$ is closed, $d\omega_{t}=0$, and
  $\frac{d}{dt}\omega_{t}=d\sigma_t$, Equation \ref{eq:Moser's trick} implies that
\begin{equation*}
  0=\alpha_{t}^{*}\left(\frac{d}{dt}\omega_{t}+\iota(X_{t})d\omega_{t}+d\iota(X_{t})\omega_{t}\right)=\frac{d}{dt}\alpha_{t}^{*}\omega_{t}.
\end{equation*}
If $X_t$ is $\phi$-equivariant, then the flow $\alpha_t$ will be $\phi$-equivariant as well. To see that $X_t$ is $\phi$-equivariant, we first remark that
{\allowdisplaybreaks 
\begin{align*}
\phi^*(\sigma_t+\iota(X_t)\omega_t) = & 0, \\
                                   = & \phi^*\sigma_t+\phi^*\iota(X_t)\omega_t
\\
                                   = & -\sigma_t+\phi^*\iota(X_t)\omega_t,
\end{align*}
which implies that $\phi^*\iota(X_t)\omega_t = \sigma_t = -\iota(X_t)\omega_t$. Therefore, for all $v\in T_qM$,
\begin{equation*}
\omega_t(\phi(q);X_t(\phi(q)),\phi_*v)=-\omega_t(q;X_t(q),v).
\end{equation*}
However, $-\omega_t(q;X_t(q),v)=\omega_t(\phi(q);\phi_*X_t(q),\phi_*v)$, so
\begin{equation*}
\omega_t(\phi(q);X_t(\phi(q)),\phi_*v)=\omega_t(\phi(q);\phi_*X_t(q),\phi_*v).
\end{equation*} }
Since this is true for all $v\in T_qM$, $\phi_*$ is an isomorphism, and $\omega_t$ is non-degenerate, this implies that $\phi_*X_t(q)=X_t(\phi(q))$, and therefore the vector field $X_t$ is $\phi$-equivariant.

  Furthermore, for $v\in T_qL$, $v\neq 0$, we have that
  $\sigma_{t}(q;v)=-\sigma_t(q;\phi_*v)=0,$ so $\omega(q;X_{t},v)=0$, which implies that
  $X_{t}\in T_{q}L\subset T_{q}M$.  Since this is true for all $t\in
  [0,1]$, the diffeomorphisms $\alpha_{t}$ determined by equation
  \ref{eq:Diffeq} satisfy the constraints in equation
  \ref{eq:Constraints} as required.
\end{proof}

\begin{lem}
  \label{lem:Moser's trick}
  Let $M$ be a $2n$-dimensional smooth manifold, and let $\phi:M\to M$ be a diffeomorphism with $\phi^2=Id$. Let $L=Fix(\phi)$, and
  suppose $Q\subset M$ is a $\phi$-invariant submanifold. Suppose that $\omega_{0},\omega_{1}\in\Omega^{2}(M)$ are closed
  two forms with $\phi^{*}\omega_{i}=-\omega_{i}$ and such that, at
  every point $q\in Q$, $\omega_{0}|_{T_{q}M}=\omega_{1}|_{T_{q}M}$
  and the $\omega_{i}$ are non-degenerate on $T_qM$. Then there exist
  neighborhoods $\mathcal{N}_{0},\mathcal{N}_{1}$ of $Q$ and a
  diffeomorphism $\alpha:\mathcal{N}_{0}\to\mathcal{N}_{1}$ which
  satisfies 
\begin{enumerate}
\item $\alpha|_{Q}=Id$,
\item $\alpha^{*}\omega_{1}=\omega_{0}$,
\item $\alpha(\mathcal{N}_{0}\cap L)\subset L$, 
\item $\alpha\circ \phi=\phi\circ \alpha.$
\end{enumerate}
\end{lem}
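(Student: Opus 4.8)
The plan is to run the Moser trick, keeping track of both the fixed-point set $L$ and the involution $\phi$ throughout, exactly in the spirit of Lemma \ref{lem:Real Moser's actual trick} but now in the \emph{relative} (Weinstein neighbourhood) setting, where the submanifold $Q$ is fixed pointwise. First I would set $\omega_t:=(1-t)\omega_0+t\omega_1$ for $t\in[0,1]$. Since $\phi^*\omega_i=-\omega_i$, each $\omega_t$ is a closed form with $\phi^*\omega_t=-\omega_t$, and since $\omega_0$ and $\omega_1$ agree on $T_qM$ for every $q\in Q$, we have $\omega_t|_{T_qM}=\omega_0|_{T_qM}$ for all $t$, so each $\omega_t$ is non-degenerate at every point of $Q$. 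Non-degeneracy is an open condition, so the set of $(x,t)\in M\times[0,1]$ at which $\omega_t$ is non-degenerate is open and contains $Q\times[0,1]$; by the tube lemma (here is where compactness of $[0,1]$ enters) there is an open neighbourhood $U$ of $Q$ on which $\omega_t$ is non-degenerate for every $t\in[0,1]$.

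Next I would produce an anti-invariant primitive for $\omega_1-\omega_0$ vanishing along $Q$. The two-form $\omega_1-\omega_0$ is closed and its pullback to $Q$ vanishes, so the relative Poincar\'e lemma, applied on a tubular neighbourhood of $Q$ inside $U$ with the standard fibrewise-scaling retraction, produces a one-form $\sigma_0$ defined on a neighbourhood $V$ of $Q$ with $d\sigma_0=\omega_1-\omega_0$ and $(\sigma_0)_q=0$ for all $q\in Q$ (the retraction fixes $Q$, so the generating vector field vanishes on $Q$ and the homotopy operator gives vanishing at points of $Q$, not merely on $TQ$). Now restrict to $U':=U\cap\phi(U)\cap V\cap\phi(V)$, which is open, $\phi$-invariant (since $\phi^2=\mathrm{Id}$) and still contains $Q$ (since $\phi(Q)=Q$), and set $\sigma:=\tfrac12(\sigma_0-\phi^*\sigma_0)$. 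Then $\phi^*\sigma=-\sigma$; moreover $d\sigma=\tfrac12\bigl((\omega_1-\omega_0)-\phi^*(\omega_1-\omega_0)\bigr)=\omega_1-\omega_0$ because $\phi^*\omega_i=-\omega_i$; and $\sigma_q=0$ for $q\in Q$ because $(\phi^*\sigma_0)_q(v)=(\sigma_0)_{\phi(q)}(\phi_*v)=0$, as $\phi(q)\in Q$. This construction of $\sigma$ is the step I expect to be the main obstacle: one needs the relative Poincar\'e lemma to get vanishing along $Q$, and then the averaging argument must be carried out on a $\phi$-invariant neighbourhood using $\phi(Q)=Q$; the rest is the routine Moser machinery.

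With $\sigma$ in hand, define the time-dependent vector field $X_t$ on $U'$ by $\iota(X_t)\omega_t=-\sigma$; this is well defined since $\omega_t$ is non-degenerate on $U'\subset U$, and $X_t$ vanishes on $Q$ since $\sigma$ does. As a neighbourhood of a set of stationary points is flow-invariant up to any finite time, another application of the tube lemma gives an open neighbourhood $\mathcal{N}_0$ of $Q$ on which the flow $\alpha_t$ of $X_t$, with $\alpha_0=\mathrm{Id}$, is defined for all $t\in[0,1]$; put $\alpha:=\alpha_1$ and $\mathcal{N}_1:=\alpha_1(\mathcal{N}_0)$ (open, and containing $Q$ since $\alpha_1$ fixes $Q$). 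The standard Moser computation gives
\begin{equation*}
\tfrac{d}{dt}\,\alpha_t^{*}\omega_t=\alpha_t^{*}\bigl(d\iota(X_t)\omega_t+\iota(X_t)d\omega_t+\tfrac{d}{dt}\omega_t\bigr)=\alpha_t^{*}(-d\sigma+0+d\sigma)=0,
\end{equation*}
so $\alpha^{*}\omega_1=\omega_0$, and $\alpha|_Q=\mathrm{Id}$ because $X_t|_Q=0$. This settles conditions (1) and (2).

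Finally I would check the two equivariance conditions, which is where $\phi^*\omega_t=-\omega_t$ and $\phi^*\sigma=-\sigma$ are used. From $\iota(X_t)\omega_t=-\sigma$ together with the anti-invariance of $\omega_t$ and $\sigma$, the computation in the proof of Lemma \ref{lem:Real Moser's actual trick} applies verbatim and yields $\phi_*X_t=X_t\circ\phi$, i.e.\ $X_t$ is $\phi$-equivariant on the $\phi$-invariant set $U'$; hence by Proposition \ref{prop:Equiv dynamics} the flow $\alpha_t$ is $\phi$-equivariant, giving $\alpha\circ\phi=\phi\circ\alpha$, which is condition (4) (and shows the flow domain may be taken $\phi$-invariant). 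For condition (3), at a point $q\in L=\mathrm{Fix}(\phi)$ the relation $\phi_*X_t(q)=X_t(\phi(q))=X_t(q)$ shows $X_t(q)$ lies in the $(+1)$-eigenspace of $\phi_*$ on $T_qM$, which is $T_qL$; thus $X_t$ is tangent to $L$ along $L$, so the restricted flow solves the same ODE inside $L$ and by uniqueness $\alpha_t$ preserves $L$, giving $\alpha(\mathcal{N}_0\cap L)\subseteq L$. This completes the plan.
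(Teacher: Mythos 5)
Your proposal is correct and follows the same overall strategy as the paper: a relative Moser argument built on an anti-invariant primitive of $\omega_1-\omega_0$ that vanishes along $Q$, followed by the equivariant flow (Proposition \ref{prop:Equiv dynamics}) and Lemma \ref{lem:Real Moser's actual trick}-style computations. The two points where you diverge in execution are both fine and arguably cleaner. First, the paper obtains the anti-invariance $\phi^*\sigma=-\sigma$ \emph{for free} by building the primitive from the homotopy operator of the retraction $\psi_t(\exp(q,v))=\exp(q,tv)$ associated to a $\phi$-invariant metric, whose exponential map is equivariant by Lemma \ref{lem:Equivariance of exp}; you instead take an arbitrary primitive from the relative Poincar\'e lemma and anti-symmetrize by $\sigma=\tfrac12(\sigma_0-\phi^*\sigma_0)$ on a $\phi$-invariant neighbourhood, which costs nothing since $d$ commutes with $\phi^*$ and $\phi(Q)=Q$ preserves the vanishing along $Q$. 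Second, for $\alpha(\mathcal{N}_0\cap L)\subset L$ the paper argues that $\sigma$ vanishes on $TL$ and hence $X_t\in(T_qL)^{\omega_t}=T_qL$ because $L$ is Lagrangian, whereas you read tangency directly off equivariance, $\phi_*X_t(q)=X_t(q)$ forcing $X_t(q)$ into the $(+1)$-eigenspace of $d\phi_q$, which is $T_qL$; both are valid. You are also more explicit than the paper about the neighbourhood bookkeeping (non-degeneracy of all $\omega_t$ and existence of the flow up to time $1$ only near $Q$), which is a genuine gap-filling improvement rather than a change of method.
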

\begin{proof}
  We may assume that $Q\cap L\neq\emptyset$, since, if this was not
  the case, we could just take the $\mathcal{N}_{i}$ small enough so
  that $\mathcal{N}_{i}\cap L=\emptyset$ and invoke the ordinary
  symplectic neighborhood theorem.

  Let $\mathcal{N}_{0}$ be a $\phi$-invariant tubular neighborhood of $L$. We
first show that there exists a $1$-form
  $\sigma\in\Omega^1(\mathcal{N}_0)$ such that
  \begin{align*}
    \sigma|_{T_{Q}M} &=0=\sigma|_{TL}, \\ 
    \phi^{*}\sigma &=-\sigma,\\  %\label{eq:Symmetric sigma}
    d\sigma &=\omega_{1}-\omega_{0}.
  \end{align*} To
  prove this, we endow $M$ with a $\phi$-invariant Riemannian metric, and
  consider the restriction of the exponential map to the normal bundle
  $TQ^{\perp}$. Since $Q$ is $\phi$-invariant, $TQ$ is $\phi_*$ invariant inside $TM$,
  and, therefore, since $\phi_*$ is an isomorphism from $T_xM$ to
  $T_{\phi(x)}M$, $TQ^{\perp}$ is $\phi_*$-invariant as well. Now, for a
  real number $\epsilon>0$, consider the neighborhood of the
  zero section of $TQ^{\perp}$
  \begin{equation*} V_{\epsilon}=\{(q,v)\in TM|q\in Q,v\in
    T_{q}Q^{\perp},|v|<\epsilon\}.
  \end{equation*}
  Define the set $U_{\epsilon}:=(V_{\epsilon}\cup
  \phi(V_{\epsilon}))$.  Then $U_{\epsilon}$ is $\phi$-invariant, and
  for $\epsilon$ sufficiently small, the restriction of the
  exponential map to $U_{\epsilon}$ is a diffeomorphism from
  $U_{\epsilon}$ to a neighborhood $\mathcal{N}_{1}$ of $Q$.  By a
  standard result in equivariant differential topology (Lemma
  \ref{lem:Equivariance of exp}, to be proven in Section
  \ref{subsub:Equivariant Differential Topology}), exp is
  equivariant as well. Now define
  $\psi_{t}:U_{\epsilon}\to\mathcal{N}_{1}$, $0<t<1$, by
  $\psi_{t}(\exp(q,v))=\exp(q,tv)$. For $t>0$, $\psi_{t}$ is a
  diffeomorphism onto its image. At $t=0$, $\mbox{Im}(\psi)\subseteq
  Q$, at $t=1$, $\psi_{1}=Id$, and $\psi_{t}|_{Q}=Id$ for all $t \in
  [0,1]$.  Since $\exp$ is equivariant, we also have $\psi_{t}\circ
  \phi(\exp(q,v))=\psi_{t}(\exp(c(q),\phi_{*}v))=\exp(\phi(q),t\phi_{*}v)=\phi\circ\exp(q,tv)=\phi\circ\psi_{t}$,
  so $\phi$ and $\psi_{t}$ commute.

  Let $\tau=\omega_{1}-\omega_{0}$. Then $\psi_{0}^{*}\tau=0$ and
  $\psi_{1}^{*}\tau=\tau$, and since $\psi_{t}$ is an equivariant
  diffeomorphism, we may define a $\phi$-equivariant vector field for $t>0$
  by $X_{t}=(\frac{\partial}{\partial t}\psi_{t})\circ \psi_t^{-1}$.
  Note that $X_{t}$ becomes singular at $t=0$. Nonetheless, we have\[
  \frac{d}{dt}\psi_{t}^{*}\tau=\psi_{t}^{*}\mathcal{L}_{X_{t}}\tau=d(\psi_{t}^{*}\iota(X_{t})\tau).\]
Let $\sigma_{t}=\psi_{t}^{*}\iota(X_{t})\tau$. Therefore, $\frac{d}{dt}\psi_{t}^{*}\tau=d\sigma_{t}$, and, by the definition of $X_t$, $\sigma_{t}$ is equal to \[
  \sigma_{t}(q;v)=\tau(\psi_t(q);\frac{d}{dt}\psi_{t}(q),d\psi_{t}(q)v).\]
Since $\sigma_t$ vanishes on $Q$ for all $t$, we may define $\sigma_{0}=0$, making $\sigma_t$ a smooth family for $t\in [0,1]$.
  In addition, we have that \[
  \tau=\psi_{1}^{*}\tau-\psi_{0}^{*}\tau=\int_{0}^{1}\frac{d}{dt}\psi_{t}^{*}\tau\,
  dt=d\sigma,\] where $\sigma=\int_{0}^{1}\sigma_{t}dt$.  It also
  follows from the equivariance of $\psi_{t}$ that  $(q,v)\in TL$, $\sigma_{t}=0$ for all $t\in [0,1]$. To see this, note that for
  $(q,v)\in TL$, $d\psi_{t}(q)v\in T_{q}L$, and since $\psi_{t}(q)\in
  L$ for all $t$, then $\frac{d}{dt}\psi_{t}(q)\in T_{\psi_{t}(q)}L$
  as well, making $\sigma_t(q;v)$ vanish by definition of $\tau$,
  because $L$ is Lagrangian for $\omega_0$ and $\omega_1$. To see that
  $\phi^{*}\sigma_t=-\sigma_t$, we compute
{\allowdisplaybreaks  \begin{align*}   
\phi^{*}\sigma_t(v) = &
\phi^{*}\tau(\psi_{t}(q);\frac{d}{dt}\psi_{t}(q),d\psi_{t} (q)\cdot)(v) \\
    = & \omega_1(\psi_{t}\circ \phi(q);\frac{d}{dt}\psi_{t}(\phi(q)),
d\psi_{t}\circ d\phi(q)v)\\
    & - \omega_0(\psi_{t}\circ \phi(q);\frac{d}{dt}\psi_{t}(\phi(q)),
d\psi_{t}\circ d\phi(q)v) \\
    = &
\omega_1(\phi\circ\psi_{t}(q);\frac{d}{dt}\phi\circ\psi_{t}(q),d\phi\circ
d\psi_{t}(q) v)\\
    & -\omega_0(\phi\circ\psi_{t}(q);\frac{d}{dt}\phi\circ\psi_{t}(q),d\phi\circ
d\psi_{t}(q) v) \\
    = & (\omega_1(\phi\circ\psi_{t}(q);d\phi\frac{d}{dt}\psi_{t}(q),d\phi\circ
d\psi_{t}(q) v)\\
    & -\omega_0(\phi\circ\psi_{t}(q);d\phi\frac{d}{dt}\psi_{t}(q),d\phi\circ
d\psi_{t}(q)\cdot v)) \\
    = & -\tau(\psi_{t}(q);\frac{d}{dt}\psi_{t}(q),d\psi_{t}(q)v) \\
    = & \vphantom{\frac{d}{dt}}-\sigma_t(v).
  \end{align*} }
  Therefore, $\phi^*\sigma=\int_0^1\phi^*\sigma_t\, dt=-\omega$. We
  have now created the desired $1$-form.

  Now consider the family of two-forms on $\mathcal{N}_0$ given by
  $\omega_{t}=\omega_{0}+t(\omega_{1}-\omega_{0})=\omega_{0}+td
  \sigma$, $t \in [0,1]$, and note that
  $\phi^{*}\omega_{t}=-\omega_{t}$ and
  $\frac{d}{dt}\omega_{t}=d\sigma$. The result now follows from Lemma
  \ref{lem:Real Moser's actual trick}. \end{proof}

\begin{thm}
  \label{thm:Real symplectic neighborhood}For $j=0,1$ let
  $(M_{j},\omega_{j},c_{j})$ be real symplectic manifolds with compact
  $c_{j}$-invariant symplectic submanifolds $Q_{j}$. Suppose that
  there is an equivariant symplectic isomorphism $\Phi:\nu_{Q_{0}}\to\nu_{Q_{1}}$
  of the symplectic normal bundles to $Q_{0}$ and $Q_{1}$ such that
  the restriction of $\Phi$ to the zero section is the
  symplectomorphism $\psi:(Q_{0},\omega_{0})\to(Q_{1},\omega_{1})$.
  Then there exist $c_{j}$-invariant neighborhoods $\mathcal{N}_{j}$
  of the $Q_{j}$ such that $\psi$ extends to an equivariant
  symplectomorphism
  $\psi^{'}:(\mathcal{N}_{0},\omega_{0},c_{0})\to (\mathcal{N}_{1},\omega_{1},c_{1})$,
  and $d\psi^{'}$ induces $\Phi$ on $\nu_{Q_{0}}$.\end{thm}
\begin{proof}
  We first show that $\psi$ extends to an equivariant diffeomorphism
  \begin{equation*}
  \psi_1:\mathcal{N}(Q_{0})\to\mathcal{N}(Q_{1})
  \end{equation*}
  that induces the
  map $\Phi$ on $\nu_{Q_{0}}$.  By Lemma \ref{lem:Equivariance of
    exp}, we may take the maps $\exp_i$ on $TM_i$ to be
  equivariant with respect to $c_i$. Define the map
  $\psi_1=\exp_{1}\circ\Phi\circ\exp_{0}^{-1}$, and consider the
  forms $\omega_{0}$ and $\omega_{1}^{'}=(\psi_1)^{*}\omega_{1}$ on
  $\mathcal{N}(Q_{0})$.  Note that, by construction, they are
  non-degenerate and they correspond on $T_{Q_{0}}M_{0}$. By Lemma
  \ref{lem:Moser's trick}, there is an equivariant diffeomorphism
  $\overline{\psi}$ of $\mathcal{N}(Q_{0})$ such that
  $\overline{\psi}^{*}\omega_{1}^{'}=\omega_{0}$. The composition
  $\psi^{'}=\psi_1\circ\overline{\psi}$ is the desired map.
\end{proof}
\begin{prop}
  \label{prop:Real symplectic chart}
  Let $(M,\omega,\phi)$ be a real symplectic manifold with real locus
  $L:=Fix(\phi)$. Let $x\in L$. Then there exists a symplectic
  equivariant map from a neighborhood $\mathcal{U}$ of $0$ in
  $(\mathbb{R}^{2n},\omega_0,c)$ to a neighborhood $\mathcal{V}$ of $x\in M$.
\end{prop}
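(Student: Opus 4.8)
The plan is to deduce this from the equivariant symplectic neighborhood theorem, Theorem \ref{thm:Real symplectic neighborhood}, applied to a point. The only real work is a linear normalization: I need a linear isomorphism $A:(\mathbb{R}^{2n},\omega_0,c)\to(T_xM,\omega_x,d\phi_x)$ which is symplectic and satisfies $A\circ c = d\phi_x\circ A$, after which Theorem \ref{thm:Real symplectic neighborhood} (with $Q_0=\{0\}$, $Q_1=\{x\}$, and $\Phi=A$ the isomorphism of the symplectic normal bundles, which here are the full tangent spaces) produces the desired equivariant symplectomorphism of neighborhoods.

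To build $A$: since $x\in L$ we have $(d\phi_x)^2=\mathrm{Id}$ and $(d\phi_x)^*\omega_x=-\omega_x$, so $(T_xM,\omega_x,d\phi_x)$ is a real symplectic vector space in the sense of Lemma \ref{lem:Sym acs vector space}. Fixing a $\phi$-invariant metric $g$ on $M$ (Corollary \ref{cor:Existence of sym acs}) and applying Lemma \ref{lem:Sym acs vector space} at $x$ gives an $\omega_x$-compatible complex structure $J_x$ with $d\phi_x\, J_x=-J_x\, d\phi_x$. The $+1$-eigenspace of $d\phi_x$ equals $T_xL$ and is Lagrangian (the fixed space of an anti-symplectic involution is isotropic since $\omega_x(v,w)=\omega_x(d\phi_x v,d\phi_x w)=-\omega_x(v,w)$ there, hence Lagrangian by a dimension count), $J_x$ carries $T_xL$ isomorphically onto the $-1$-eigenspace, and $g_{J_x}(\cdot,\cdot)=\omega_x(\cdot,J_x\cdot)$ restricts to an inner product on $T_xL$. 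Choosing a $g_{J_x}$-orthonormal basis $e_1,\dots,e_n$ of $T_xL$, the tuple $(e_1,\dots,e_n,J_xe_1,\dots,J_xe_n)$ is a symplectic basis of $(T_xM,\omega_x)$ on which $d\phi_x$ acts as $+1$ on the $e_i$ and $-1$ on the $J_xe_i$. Sending the standard basis of $\mathbb{R}^n\oplus i\mathbb{R}^n=\mathbb{R}^{2n}$ to this basis defines a linear map $A$ with all the required properties.

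With $A$ in hand, Theorem \ref{thm:Real symplectic neighborhood} yields a $c$-invariant neighborhood $\mathcal{U}$ of $0\in\mathbb{R}^{2n}$, a $\phi$-invariant neighborhood $\mathcal{V}$ of $x\in M$, and an equivariant symplectomorphism $\psi':(\mathcal{U},\omega_0,c)\to(\mathcal{V},\omega,\phi)$ with $d\psi'_0=A$; this is the desired chart. Alternatively, one can bypass the neighborhood theorem and argue directly: pull $\omega$ back by $\exp_x\circ A$ (the Riemannian exponential of the $\phi$-invariant metric, equivariant by Lemma \ref{lem:Equivariance of exp}) to obtain a closed form on a neighborhood of $0$ that agrees with $\omega_0$ to first order at $0$ and satisfies $c^*(\cdot)=-(\cdot)$, then straighten it with the equivariant Moser trick, Lemma \ref{lem:Moser's trick}, applied with $Q=\{0\}$, which meets $\mathrm{Fix}(c)=\mathbb{R}^n$.

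I do not anticipate a genuine obstacle. The substantive input is the symmetric compatible complex structure $J_x$ from Lemma \ref{lem:Sym acs vector space}, which drives the linear normalization; everything else is bookkeeping. The one point to watch is the degenerate input $Q_1=\{x\}$ in Theorem \ref{thm:Real symplectic neighborhood}: a point is trivially a compact, $\phi$-invariant symplectic submanifold whose symplectic normal bundle is the full tangent space, so all hypotheses are met, but this should be stated explicitly.
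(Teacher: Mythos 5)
Your proof is correct, and its overall skeleton is the same as the paper's: reduce to a linear normalization at the point and then invoke the equivariant symplectic neighborhood theorem (Theorem \ref{thm:Real symplectic neighborhood}) with $Q_0=\{0\}$, $Q_1=\{x\}$. The difference lies in how the linear step is carried out. The paper first passes to a $\phi$-invariant chart sending $L$ into $\mathbb{R}^n$, so that the induced involution is a linear anti-symplectic involution of $(\mathbb{R}^{2n},\omega_0)$ with fixed set $\mathbb{R}^n$, and then applies Lemma \ref{lem:Uniqueness of linear anti-symplectic involutions for omega0} to conjugate it to the standard $c$ by a linear symplectomorphism. You instead work directly in $(T_xM,\omega_x,d\phi_x)$: Lemma \ref{lem:Sym acs vector space} (via Corollary \ref{cor:Existence of sym acs}) supplies an $\omega_x$-compatible $J_x$ anti-commuting with $d\phi_x$, so that a $g_{J_x}$-orthonormal basis $e_1,\dots,e_n$ of the Lagrangian $T_xL$ yields the symplectic basis $(e_i,J_xe_i)$ on which $d\phi_x$ acts as $\mathrm{diag}(+1,-1)$, i.e.\ as $c$. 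Your route buys two small things: it sidesteps the paper's (somewhat implicit) need to arrange that the chart-induced involution is actually anti-symplectic for $\omega_0$ before Lemma \ref{lem:Uniqueness of linear anti-symplectic involutions for omega0} can be applied, and it reuses machinery already established for the compatible almost complex structures rather than a separate ad hoc linear lemma. Your explicit remark that a point is a legitimate (compact, invariant, symplectic) input to Theorem \ref{thm:Real symplectic neighborhood}, with symplectic normal bundle the full tangent space, is exactly the check the paper leaves unsaid.
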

In order to prove this proposition, we will need the following lemma.

\begin{lem}
\label{lem:Uniqueness of linear anti-symplectic involutions for omega0}
Let $\Phi:\mathbb{R}^{2n}\to \mathbb{R}^{2n}$ be a linear map such that
$\Phi^2=Id$ and $\Phi^*\omega_0(v,w)=-\omega_0(v,w)$ for all $v,w\in
\mathbb{R}^{2n}$. Then there exists a linear symplectic isomorphism
$\Psi:\mathbb{R}^{2n}\to \mathbb{R}^{2n}$ such that $\Psi \Phi =c_* \Psi$, where
$c$ is the standard anti-symplectic involution on $\mathbb{R}^{2n}$.
\end{lem}
\begin{proof}
We first consider the case $n=1$. (We do this to demonstrate the construction.
The proof does not proceed by induction.) Let $v\in Fix(\Psi)=Fix(c_*)$ such
that $\omega_0(v,iv)=1$, where $i$ is the standard complex structure on
$\mathbb{R}^2$. Then $\mathbb{R}^{2}=Span\{v,iv\}$. Let $w$ be an eigenvector of
$\Psi$ with eigenvalue $-1$. Let $\beta:=\omega_0(v,w)$. Now note that
$\{v,iv\}$ and $\{v,w\}$ are bases for $\mathbb{R}^2$. We define the map
$\Psi:\mathbb{R}^{2}\to \mathbb{R}^{2}$ to be the matrix sending $v \mapsto v$
and $w \mapsto (0,\omega_0(v,w))$, where the coordinates are the standard
$(x,y)=(v,iv)$ coordinates on $\mathbb{R}^2$. Then, for two vectors
$av+bw,cv+dw$, we have
\begin{align*}
\omega_0(av+bw,cv+dw) = & \omega_0(av,dw)+\omega_0(bw,cv) \\
                     = & (ad - bc)\beta.
\end{align*}
On the other hand,
\begin{align*}
\omega_0(\Psi(av+bw),\Psi(cv+dw)) = & \omega_0(av+\beta \cdot biv,cv+\beta\cdot
div) \\ 
 = & (ad - bc)\beta.
\end{align*}
Since the constants $a,b,c,d\in \mathbb{R}$ were arbitrary, we see that $\Psi$
is a linear symplectomorphism.
Without loss of generality, consider a linear anti-symplectic involution
$\Phi:\mathbb{R}^{2n}\to
\mathbb{R}^{2n}$ with
$Fix(\Phi)=\mathbb{R}^n$. Let $e_i,i\in \{1,\dots,2n\}$ denote the standard
basis in $\mathbb{R}^{2n}$, and consider the standard coordinates
$(x_1,\dots,x_n,y_1,\dots,y_n)$ in $\mathbb{R}^{2n}$. Take a basis
$(v_1,\dots,v_n)$ of the $-1$ eigenspace of $\Phi$, and define the map
$\Psi:\mathbb{R}^{2n}\to \mathbb{R}^{2n}$ to be the unique linear map sending
$e_i\mapsto e_i$, and $v_i\mapsto (0, \dots,0,
\omega_0(e_1,v_i),\dots,\omega_0(e_n,v_i))$, where there are $n$ leading zeros
in the coordinate (i.e. the $-1$ eigenspace of $\Phi$ is sent to the $-1$
eigensapce of $c_*$). 

We now show that $\Psi$ is a symplectomorphism. First note that for $i\in \{1,\dots,n\}$ we have $\omega_0(e_i,e_j)=0=\Phi^*\omega_0(e_i,e_j)$. Furthermore, we see that 
\begin{equation*}
-\omega_0(v_i,v_j)=\Phi^*\omega_0(v_i,v_j)=\omega_0(\Phi v_i,\Phi v_j)=\omega_0(v_i,v_j),
\end{equation*}
which implies that $\omega_0(v_i,v_j)=0=\Phi^*\omega(v_i,v_j)$. Now note that
\begin{equation*}
\Phi^*\omega_0(e_i,v_j)=\omega_0(e_i,v_j)\omega_0(e_i,e_i)=\omega_0(e_i,v_j),
\end{equation*}
as desired. Since $\Psi \Phi = \Phi c_*$, the proof of the lemma is complete.
\end{proof}

\begin{proof}[Proof of \ref{prop:Real symplectic chart}]
We first consider a $\phi$-invariant chart $(U,\alpha)$, $\alpha:U \subset M \to \mathbb{R}^{2n}$ centered at the point $p\in L$ which sends $L\to \mathbb{R}^{n}\subset \mathbb{C}^n$. We now consider the real structure $\Phi:=\alpha\circ\phi\circ \alpha^{-1}$ on $Im(\alpha)$. By Lemma \ref{lem:Uniqueness of linear anti-symplectic involutions for omega0}, there is a linear symplectic isomorphism $\Psi:\mathbb{R}^{2n}\to \mathbb{R}^{2n}$ such that $\Phi_* \Psi=c_* \Psi$ at the point $0$. Now apply Theorem \ref{thm:Real symplectic neighborhood} to the point $0\in \mathbb{R}^{2n}$.
\end{proof}

We now prove a real version of the Moser stability theorem.
\begin{prop}
  \label{prop:Real-Moser-Stability} Let $M$ be a closed manifold, and
  suppose that $\omega_{t}$ is a family of cohomologous symplectic
  forms on $M$ with anti-symplectic involution $\phi,$ i.e. such that
  $\phi^{*}\omega_{t}=-\omega_{t}$. Then there is a family of
  diffeomorphisms $\psi_{t}$ such that
  $\phi\circ\psi_{t}=\psi_{t}\circ\phi$, $\psi_{0}=id$, and
  $\psi_{t}^{*}\omega=\omega_{t}$.\end{prop}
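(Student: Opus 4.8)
The plan is to reduce the statement to Lemma \ref{lem:Real Moser's actual trick}, whose only hypothesis beyond those already available here is a smooth family of one-forms $\sigma_t$ with $\frac{d}{dt}\omega_t = d\sigma_t$ and $\phi^*\sigma_t = -\sigma_t$. Once such a family is produced, that lemma yields diffeomorphisms $\alpha_t$ with $\alpha_t^*\omega_t = \omega_0 = \omega$, $\alpha_0 = \mathrm{id}$, and $\alpha_t\circ\phi = \phi\circ\alpha_t$. Setting $\psi_t := \alpha_t^{-1}$ then gives $\psi_0 = \mathrm{id}$; conjugating the identity $\alpha_t\circ\phi = \phi\circ\alpha_t$ by $\alpha_t^{-1}$ on both sides yields $\phi\circ\psi_t = \psi_t\circ\phi$; and applying $(\alpha_t^{-1})^*$ to $\alpha_t^*\omega_t = \omega$ gives $\psi_t^*\omega = (\alpha_t^{-1})^*\alpha_t^*\omega_t = (\alpha_t\circ\alpha_t^{-1})^*\omega_t = \omega_t$, which is exactly the asserted conclusion. (Note that $L = \mathrm{Fix}(\phi)$ is automatically Lagrangian, so Lemma \ref{lem:Real Moser's actual trick} applies; the constraint $\alpha_t(L)\subseteq L$ it provides is simply not needed here.)

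So the content is entirely in constructing the anti-invariant primitive family $\sigma_t$. First I would observe that $\eta_t := \frac{d}{dt}\omega_t$ is a smooth family of closed $2$-forms (differentiate $d\omega_t = 0$ in $t$) whose de Rham class vanishes: since the $\omega_t$ are cohomologous, $[\omega_t]$ is constant in $t$, hence $[\eta_t] = \frac{d}{dt}[\omega_t] = 0$, so each $\eta_t$ is exact. Because $M$ is closed, Hodge theory supplies a primitive depending smoothly on $t$: fixing any Riemannian metric on $M$ with Green operator $G$, the form $\tilde\sigma_t := d^*G\eta_t$ satisfies $d\tilde\sigma_t = \eta_t$ (since $\eta_t$ is exact it is $L^2$-orthogonal to the harmonic forms, so $\eta_t = \Delta G\eta_t$, and $d^*dG\eta_t = Gd^*d\eta_t = 0$ as $d\eta_t = 0$, whence $\eta_t = dd^*G\eta_t$), and $t\mapsto\tilde\sigma_t$ is smooth because $d^*G$ is a fixed linear operator applied to the smooth family $\eta_t$.

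Next I would symmetrize $\tilde\sigma_t$ to make it anti-invariant, exactly in the spirit of the averaging trick used in Lemma \ref{lem:Form symmetrization}. Set $\sigma_t := \tfrac{1}{2}(\tilde\sigma_t - \phi^*\tilde\sigma_t)$. Since $\phi^2 = \mathrm{id}$, we get $\phi^*\sigma_t = \tfrac{1}{2}(\phi^*\tilde\sigma_t - \tilde\sigma_t) = -\sigma_t$. Moreover $d\sigma_t = \tfrac{1}{2}(\eta_t - \phi^*\eta_t)$, and differentiating $\phi^*\omega_t = -\omega_t$ in $t$ gives $\phi^*\eta_t = -\eta_t$, so $d\sigma_t = \tfrac{1}{2}(\eta_t + \eta_t) = \eta_t = \frac{d}{dt}\omega_t$. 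Thus $\sigma_t$ is the required family, and the proof finishes by applying Lemma \ref{lem:Real Moser's actual trick} and passing to inverses as described in the first paragraph.

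I do not expect a genuine obstacle: the delicate part of a real Moser argument, namely the $\phi$-equivariance of the resulting flow (via Proposition \ref{prop:Equiv dynamics}), is already carried out inside Lemma \ref{lem:Real Moser's actual trick}, and the anti-invariance of the primitive is free from the averaging $\tilde\sigma_t\mapsto\tfrac12(\tilde\sigma_t - \phi^*\tilde\sigma_t)$. The only point that needs a word of justification is the smooth-in-$t$ choice of primitive of the exact family $\eta_t$, and that is precisely what Hodge theory on the closed manifold $M$ delivers.
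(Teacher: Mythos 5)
Your proposal is correct and follows essentially the same route as the paper: reduce to producing a smooth family of primitives $\tau_t$ of $\frac{d}{dt}\omega_t$, anti-symmetrize via $\sigma_t=\tfrac12(\tau_t-\phi^*\tau_t)$, and apply Lemma \ref{lem:Real Moser's actual trick}. The only differences are cosmetic: the paper obtains the smooth family of primitives by citing the proof of Moser stability in McDuff--Salamon where you supply the Hodge-theoretic construction explicitly, and you are slightly more careful in passing from $\alpha_t^*\omega_t=\omega_0$ to the stated form $\psi_t^*\omega=\omega_t$ by inverting the diffeomorphisms.
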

\begin{proof}
  We must show that there is a smooth family of one forms $\sigma_{t}$
  such that
  \begin{equation}
    \label{eq:Dsigma = omega}
    d\sigma_{t}=\frac{d}{dt}\omega_{t}
  \end{equation}
  and $\phi^{*}\sigma_{t}=-\sigma_{t}$.

  The proof of Moser stability theorem (Theorem 3.17 in
  \cite{McDuff_Salamon_Intro_1998}) shows that there exists a smooth
  family of one forms $\tau_{t}$ satisfying (\ref{eq:Dsigma = omega}).
  Let $\sigma_{t}=\frac{1}{2}(\tau_{t}-\phi^{*}\tau_{t})$. Then
  $d\sigma_{t}=\frac{1}{2}(\frac{d}{dt}\omega_{t}-\phi^{*}\frac{d}{dt}\omega_{t})=\frac{1}{2}(\frac{d}{dt}\omega_{t}-\frac{d}{dt}\phi^{*}\omega_{t})=\frac{d}{dt}\omega_{t}$.
  Applying Lemma \ref{lem:Real Moser's actual trick}, we arrive at the
  desired result.
\end{proof}

\subsection{Locally holomorphic maps}
\label{subsec:Locally holomorphic maps}
In this section we prove Proposition \ref{prop:Normalization}, which shows that, given a relative or real symplectic
embedding 
\begin{equation*}
\psi:(B(1+2\epsilon),\lambda^2\omega_0,B_{\mathbb{R}}(1 + 2\epsilon))\to
(M,\omega,L)
\end{equation*}and an almost complex structure on $M$ which satisfies
some additional conditions, we may find a form $\omega^{'}$ on $M$
isotopic to $\omega$, and a relative symplectic embedding
$\psi^{'}:(B(\delta),\lambda^2\omega_0,B_{\mathbb{R}}(\delta))\to
(M,\omega^{'},L)$ whose image is contained in the image of $\psi$ but which
is holomorphic near the origin. We state the main proposition of this
section here.

\begin{prop*}[Proposition \ref{prop:Normalization}]
  
\item

\end{prop*}

In the proof we will use the following lemma, which is a modification
of Proposition 5.5.B in McDuff and Polterovich
\cite{McDuff_Polterovich_1994}.

\begin{lem}
  \label{lem:Local i-standard symplectic form}
  Let $\omega$ be a symplectic form on $B(1)$ which tames the standard
  complex structure $i$ and satisfies $c^*\omega=-\omega$ for the
  standard real structure $c$. Then there exists a
  symplectic form on $B(1)$, say $\Omega$, with the
  following properties:
  \begin{enumerate}
  \item $\Omega$ coincides with $\omega$ near the boundary of the
    ball;
  \item $\Omega$ tames $i$;
  \item $\Omega$ is $i$-standard near $0$, i.e. it is K\"ahler, and
    the associated metric is flat.
  \item $c^{*}\Omega=-\Omega$, and, in particular,
    $B_{\mathbb{R}}(1)$ is a Lagrangian for $\Omega$.
  \end{enumerate}
\end{lem}
\begin{proof}

  We divide the proof into three steps.

  Step 1. We claim that for every $\kappa>1$ and every $1>\epsilon >
  0$, there exists a K\"ahler form, say $\tau_{\kappa}$ on $B(1)$ which
  is equal to $\kappa^2\omega_0$ in $B(\epsilon/2\kappa)$ and
  coincides with $\epsilon^2\omega_0$ near the boundary, where
  $\omega_0$ is the standard symplectic form on $B(1)$. Indeed, take
  a monotone map $h$ defined by $h(z)=(\kappa/\epsilon)z$ for $z\in
  B(\epsilon/2\kappa)$ and such that $h$ is equal to the identity map near the
  boundary. Then the form $\tau_{\kappa}=h^*(\epsilon^2\omega)$ is
  K\"ahler by Lemma \ref{lem:Pull back of radial function is K\"ahler}.

  Step 2. Let $\rho$ be a bump function on $\mathbb{R}^{2n}$ which is
  radial, equal to $1$ near the origin, and which vanishes for
  $|z|>1-\delta$, for some $\delta>0$. Let $\omega_0$ be the standard
  symplectic form on $\mathbb{R}^{2n}$. Choose $\epsilon>0$ so that
  $\omega-\epsilon^2\omega_0$ tames $i$, and set
  $\rho_{\alpha}(z)=\rho((2\alpha/\epsilon)z)$, with $1 < \alpha < \kappa$. Finally, denote by
  $\beta$ a primitive of $\omega$ so that $\omega=d\beta$. Now
  consider the form
  \begin{equation*}
    \omega^{'}(\kappa)=\omega + (\tau_{\kappa} - \epsilon^2\omega_0 - d(\rho_{\alpha}\beta)).
  \end{equation*}
  We claim that $\omega^{'}(\kappa)$ satisfies the first four
  properties provided $\kappa$ is sufficiently large.

  We note that $\omega^{'}(\kappa)$ coincides with $\omega$ near the
  boundary, and near the origin $\omega^{'}(\kappa)$ is
  equal to $(\kappa^2-\epsilon^2)\omega_0$, and is therefore
  $J$-standard there. Moreover, $\rho_{\alpha}=0$ outside
  $B((\epsilon/2\alpha)(1 - \delta))$, and therefore
  $\omega^{'}(\kappa)=\omega-(\epsilon^2\omega_0 +\tau_{\kappa})$
  there. By assumption on $\epsilon$, $\omega-\epsilon^2\omega_0$
  tames $i$ on this region, and since $\tau_k$ is K\"ahler, $\omega{'}(\kappa)$ tames $i$ as well.

  We now check that $\omega^{'}(\kappa)$ tames $i$ inside
  $B(\epsilon/2\alpha)$. On this region
  \begin{equation*}
    \omega^{'}(\kappa)=(\kappa^2-\epsilon^2)\omega_0 + (1-\rho_{\alpha})\omega - d\rho_{\alpha} \wedge \beta.
  \end{equation*}

Let $|\cdot|$ denote the Euclidean distance of a vector in $\mathbb{R}^2n$.
Since $\overline{B}(\epsilon/2\alpha)$ is compact, the sphere bundle
  \begin{equation*}
    S=\{(x,\xi)|\,x\in \overline{B},|\xi|=1\}\subset T\mathbb{R}^{2n}
  \end{equation*}
  is compact, and therefore the function $d\rho_{\alpha} \wedge
\beta(\xi,i\xi)$ has a maximum, say $M$ on $S$. Therefore, for any $\xi \in
  T_x(B(\epsilon/2\alpha))$,
  \begin{equation*}
    d\rho_{\alpha}\wedge \beta(\xi,i\xi)=|\xi|^2d\rho_{\alpha}\wedge \beta
\left(\frac{\xi}{|\xi|},i\frac{\xi}{|\xi|}\right)
  \end{equation*}
  and therefore the maximum of $d\rho \wedge \beta (\xi,i\xi)$ on
  $S_a=\{(x,\xi)|\,x\in \overline{B},|\xi|=a\}\subset
  T\mathbb{R}^{2n}$ is $|\xi|^2M$. Since $\omega$ tames $i$ and $1-\rho\geq 0$,
$(1-\rho_{\alpha})\omega(\xi,i\xi)>0$ for all $\xi \neq 0$, and we conclude that
  $\omega^{'}(\kappa)(\xi,i\xi)>(\kappa^2-\epsilon^2-M)|\xi|^2$. Since
  the quantity on the right is positive for sufficiently large
  $\kappa$, $\omega^{'}(\kappa)$ tames $i$ if we choose $\kappa$
  large enough.

  Step 3. We see from the above that the symplectic form
  $\omega^{'}(\kappa)$ satisfies the first three properties, but does
  not necessarily respect the real structure. By Lemma \ref{lem:Form
    symmetrization}, however, since $\omega^{'}(\kappa)$ tames $i$ and
  $c_* i c_* = -i$, the form
  $\Omega=\frac{1}{2}(\omega^{'}(\kappa)-c^{*}\omega^{'}(\kappa))$ is
  symplectic, and satisfies the last property. We check that it
  satisfies the first three properties as well.  $\Omega$ tames $i$ by
  Lemma \ref{lem:Form symmetrization}, and, since
  $\omega^{'}(\kappa)=(\kappa^2-\epsilon^2)\omega_0$ near the origin,
  $\Omega(\kappa)=\omega^{'}(\kappa)$ near the origin, and is
  therefore $i$-standard on the same region as
  $\omega^{'}(\kappa)$. Furthermore, since $\omega^{'}(\kappa)$
  coincides with $\omega$ near the boundary of the ball and
  $c^*\omega=-\omega$, then $\Omega=\omega^{'}(\kappa)=\omega$ near
  the boundary of the ball as well. Thus $\Omega(\kappa)$ satisfies
  the conclusion of the lemma for $\kappa$ sufficiently large, and
  this completes the proof.
\end{proof}

\begin{proof}[Proof of Proposition \ref{prop:Normalization}]
 
  We first assume the hypotheses in Item \ref{prop:Normalization2} of the
proposition, i.e. that $M$ is a real symplectic manifold with real structure
$\phi$, $Fix(\phi) = L$, $\psi\circ c=\phi \circ \psi$, and $J$ is a tame almost
complex structure that is symmetrically integrable and satisfies $\phi_* J
\phi_* = -J$. We split the proof into two steps.

  Step 1. Let $(V,\gamma)$, $\gamma:V\subset M\to
  \mathbb{C}^n$ be a
  symmetric, holomorphic chart centered at $\psi(0)$, which exists
  because $J$ is symmetrically integrable around $\psi(0)$. Let
  $W\subset \gamma(V)$ be a small ball centered at $0$ inside
  $\gamma(V)$, and let $c$ denote
complex conjugation on $W$. By Lemma
  \ref{lem:Local i-standard symplectic form},
  there exists a symplectic form $\overline{\omega}$ on $W$
  which tames $i$, satisfies $c^{*}\overline{\omega} = -\overline{\omega}$, is
$i$-standard near $0$, and coincides with
  $(\gamma^{-1})^*\omega$ near the boundary of $W$. Let the form $\omega^{'}$ on
$M$ be given by
  \begin{equation*}
   \omega_{x}^{'} = \begin{cases} \omega_{x} & \text{for } x \in M \backslash
\gamma^{-1}(W) \\
(\gamma^{-1})^{*}\overline{\omega} & \text{for} x \in \gamma^{-1}(W)
\end{cases}
  \end{equation*}

Note that since $\overline{\omega}$ tames $i$ on $W$ and $\gamma$ is symmectric
and holomorphic, $\omega^{'}$ tames $J$ on $M$, $\omega^{'}$ is
$J$-standard near $\psi(0)$, and $\phi^*\omega^{'} = -\omega^{'}$. Therefore,
for each $s\in [0,1]$, $\omega_s = \omega' + s(\omega - \omega')$ is a
symplectic form, and, furthermore, $\phi^*\omega_s = -\omega_s$ for all $s \in
[0,1]$. Since the closed form $\omega - \omega'$ is non-zero only on a
contractible set, each of the $\omega_s$ are in the same cohomology class in
$H^2(M;\R)$ and $\omega$ and $\omega'$ are therefore isotopic through real
symplectic forms. 

  For the first part of the theorem, we note that, since $J$ is
  relatively integrable, there is a chart $(V,\gamma)$ around
  $\psi(0)$ such that $\gamma(L)\subset \mathbb{R}^n$ and where $c\circ
\gamma(V) = V$. If we use this
  chart in the place of $(V,\gamma)$ above and follow the same
  reasoning as above, the result follows.
\end{proof} 

\section{Topological Criterion for the Real Blow-down}\label{sec:Topological criterion}

In this section we prove Theorem \ref{thm:Real blow-down condition},
which gives a sufficient condition for blowing down a
real Lagrangian submanifold. 

\subsection{Equivariant Differential Topology}\label{subsub:Equivariant Differential Topology}

In this subsection, we collect the statements of several classical results from
equivariant differential topology which we will need for the proof of
Theorem \ref{thm:Real blow-down condition}. The proofs are found in Bredon
\cite{Bredon_1972} and Kawakubo \cite{Kawakubo_1991}

\begin{defn}
  Let $M$ be a $C^{\infty}$ manifold, and $G$ be a compact Lie
  group. If $\Phi:G\times M\to M$ is a smooth action of $G$, then we
  call $\Phi$ a \emph{$G$-action} on $M$, and if $M$ admits such a
  $G$-action, we call $M$ a \emph{$G$-manifold}.
\end{defn}

\begin{lem}
  \label{lem:Existence of invariant metric}Let $G$ be a compact Lie group,
  and let $M$ be a finite-dimensional $G$-manifold. Then there exists
  a $G$-invariant Riemannian metric $g$ on $M$.\end{lem}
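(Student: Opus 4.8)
The plan is to produce the invariant metric by averaging an arbitrary Riemannian metric over $G$ against normalized Haar measure, the standard device available precisely because $G$ is compact; this is the argument found in Bredon \cite{Bredon_1972}.

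First I would fix any Riemannian metric $g_0$ on $M$, which exists by the usual partition-of-unity construction since a manifold is paracompact. Let $\mu$ be the normalized Haar measure on $G$, so $\mu(G)=1$; such a finite, bi-invariant measure exists exactly because $G$ is compact. Writing $\Phi_g:M\to M$ for the diffeomorphism $x\mapsto g\cdot x$ induced by the $G$-action, I would then define, for $x\in M$ and $v,w\in T_xM$,
\begin{equation*}
g_x(v,w):=\int_G (g_0)_{\Phi_g(x)}\big((\Phi_g)_*v,\,(\Phi_g)_*w\big)\,d\mu(g).
\end{equation*}

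Next I would check that $g$ has the required properties. Bilinearity and symmetry in $(v,w)$ are inherited termwise from $g_0$. Positive-definiteness holds because for $v\neq 0$ the integrand is, for each $g$, the strictly positive number $(g_0)_{\Phi_g(x)}((\Phi_g)_*v,(\Phi_g)_*v)$, and the integral of a continuous strictly positive function against a probability measure is strictly positive. Smoothness of $x\mapsto g_x$ follows by differentiating under the integral sign: the integrand is jointly smooth in $(g,x)$ and $G$ is compact, so all $x$-derivatives pass inside the integral. Finally, for $h\in G$, using $\Phi_g\circ\Phi_h=\Phi_{gh}$, the substitution $g\mapsto gh$, and the right-invariance of $\mu$,
\begin{equation*}
g_{\Phi_h(x)}\big((\Phi_h)_*v,(\Phi_h)_*w\big)=\int_G (g_0)_{\Phi_{gh}(x)}\big((\Phi_{gh})_*v,(\Phi_{gh})_*w\big)\,d\mu(g)=g_x(v,w),
\end{equation*}
so $g$ is $G$-invariant.

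Since every step rests only on the compactness of $G$ — which guarantees both a finite normalized Haar measure and the validity of differentiation under the integral — I do not anticipate any genuine obstacle; the content of the lemma is entirely the classical averaging trick, and the only mild care needed is in justifying the interchange of differentiation and integration.
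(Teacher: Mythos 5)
Your averaging argument is correct and is precisely the classical proof that the paper itself omits, deferring instead to Bredon and Kawakubo; your integral over normalized Haar measure, the termwise verification of bilinearity, symmetry, positive-definiteness and smoothness, and the substitution using right-invariance of $\mu$ are all sound. No gaps.
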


\begin{lem}
  \label{lem:Fixclosed}Let $G$ be a compact Lie group, and let $M$ be
  a topological $G$-space. Then the fixed point set of $G$, $M^{G}$,
  is a closet set.\end{lem}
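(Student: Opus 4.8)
The plan is to reduce the statement to the elementary fact that the equalizer of two continuous maps into a Hausdorff space is closed, and then to exhibit $M^{G}$ as an intersection of such equalizers. I will take $M$ to be Hausdorff throughout; this is the standing convention for the $G$-spaces under consideration here (in the applications $M$ is always a manifold, hence Hausdorff), and some separation hypothesis is genuinely needed, since the fixed set of a self-homeomorphism of a non-Hausdorff space need not be closed.

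First I would fix $g \in G$ and consider the map $\mu_{g}\colon M \to M \times M$ given by $\mu_{g}(x) = (x, g\cdot x)$. This map is continuous: its second coordinate is the restriction of the action map $\Phi\colon G \times M \to M$ to the slice $\{g\}\times M$, and its first coordinate is the identity. Since $M$ is Hausdorff, the diagonal $\Delta = \{(x,x) : x \in M\} \subset M \times M$ is closed, and therefore $M^{g} := \{ x \in M : g\cdot x = x\} = \mu_{g}^{-1}(\Delta)$ is closed in $M$.

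It then suffices to observe that $M^{G} = \bigcap_{g \in G} M^{g}$ is an intersection of closed sets, hence closed. Note that compactness of $G$ plays no role in this argument; it belongs to the general framework of this section but is not needed for this particular statement.

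I do not anticipate any genuine obstacle. The only delicate point is the separation hypothesis on $M$: if one prefers not to assume it outright, one may restrict attention — as is in fact the case in the intended applications — to $G$-manifolds, which are Hausdorff by convention, so that the argument above applies verbatim.
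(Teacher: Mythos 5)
Your proof is correct and is essentially identical to the paper's: both exhibit $\mathrm{Fix}(g)$ as the preimage of the diagonal $\Delta\subset M\times M$ under the continuous map $x\mapsto (x,g\cdot x)$ and then intersect over $g\in G$. The only difference is that you make explicit the Hausdorff hypothesis needed for $\Delta$ to be closed (and observe that compactness of $G$ is unused), points the paper leaves implicit.
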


\begin{thm}
  \label{thm:Existence of invariant tubular neighborhood}
  Let $M$ be a $G$-manifold with $G$ finite. If $A$ is a
  closed $G$-invariant submanifold of $M$, then $A$ has an open $G$-invariant tubular
  neighborhood in $M.$ \end{thm}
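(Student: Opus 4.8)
The plan is to carry out the classical tubular-neighbourhood construction equivariantly, using the finiteness of $G$ where averaging is needed. First I would fix a $G$-invariant Riemannian metric $g$ on $M$, which exists by Lemma \ref{lem:Existence of invariant metric}. Since $A$ is a $G$-invariant submanifold and $G$ acts on $(M,g)$ by isometries, the subbundle $TA\subset TM|_A$ is preserved by the differentials $g_*$, and hence so is its $g$-orthogonal complement $\nu(A)$, the normal bundle of $A$ in $M$. Thus $G$ acts on the total space of $\nu(A)$ by vector-bundle automorphisms lying over the action on $A$.

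Next I would invoke the equivariance of the exponential map of $g$: since each $g\in G$ is an isometry, it carries geodesics to geodesics, so $\exp_{g\cdot x}\circ g_*=g\circ\exp_x$ wherever both sides are defined, which is precisely Lemma \ref{lem:Equivariance of exp}. Restricting $\exp$ to a neighbourhood of the zero section of $\nu(A)$ therefore gives a $G$-equivariant map. By the standard (non-equivariant) argument there is a smooth positive function $\epsilon\colon A\to(0,\infty)$ such that $\exp$ restricts to a diffeomorphism of $V:=\{(x,v)\in\nu(A):|v|_g<\epsilon(x)\}$ onto an open neighbourhood of $A$ in $M$; when $A$ is compact one may take $\epsilon$ constant, and in general one patches together local choices with a partition of unity.

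The remaining point, and the only place where finiteness of $G$ is essential, is to replace $\epsilon$ by a $G$-invariant width function without destroying the embedding property. Setting $\bar\epsilon(x):=\min_{g\in G}\epsilon(g\cdot x)$ produces a continuous, strictly positive, $G$-invariant function with $\bar\epsilon\le\epsilon$, and one may further choose a smooth $G$-invariant $\epsilon'$ with $0<\epsilon'\le\bar\epsilon$ if a $C^{\infty}$ width is wanted. Then $W:=\{(x,v)\in\nu(A):|v|_g<\epsilon'(x)\}$ is a $G$-invariant open neighbourhood of the zero section, $\exp|_W$ is still a diffeomorphism onto its image, and since $\exp$ is equivariant the image $\exp(W)$ is a $G$-invariant open tubular neighbourhood of $A$ in $M$. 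I expect no serious obstacle here: the entire content is the invariance bookkeeping just described, and the minimum/averaging trick disposes of it cleanly precisely because $G$ is finite.
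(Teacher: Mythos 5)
Your argument is correct. The paper does not prove this theorem at all --- it is quoted from Bredon and Kawakubo --- and your proof is precisely the standard one those references use: average to get a $G$-invariant metric, note that isometries preserve the normal bundle and commute with $\exp$, and then shrink the non-equivariant tube width to a $G$-invariant one, the finiteness of $G$ guaranteeing that $\min_{g\in G}\epsilon(g\cdot x)$ is still positive and continuous (and, as you note, smoothness of the width is optional, since openness of $W$ only needs continuity).
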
 

\begin{prop}
  \label{prop:Fixman}Let $G$ be a compact Lie group, and let $M$ be a
  $G$-manifold. Then the fixed point set of $G$, $M^{G}$, is a smooth closed
  submanifold of $G$. \end{prop}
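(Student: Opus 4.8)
The plan is to produce, around each fixed point, a chart in which $M^{G}$ is cut out as a linear subspace, using a $G$-invariant metric together with the equivariance of the exponential map. First I would invoke Lemma \ref{lem:Existence of invariant metric} to fix a $G$-invariant Riemannian metric $g$ on $M$; since $M^{G}$ is already closed by Lemma \ref{lem:Fixclosed}, it remains only to exhibit the submanifold structure, and this may be checked locally near an arbitrary point $x\in M^{G}$.

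Because $x$ is fixed by every element of $G$, the assignment $h\mapsto dh_{x}$ defines a linear representation of $G$ on $T_{x}M$ (the isotropy representation), which is orthogonal with respect to the inner product $g_{x}$ since each $h\in G$ acts on $M$ by isometries. Let $V:=(T_{x}M)^{G}$ be the subspace of vectors fixed by this representation; it is a linear subspace of $T_{x}M$, and averaging over $G$ (equivalently, taking the orthogonal complement of the span of the vectors $dh_{x}v-v$) shows that the orthogonal projection onto $V$ is $G$-equivariant. The key input is that the Riemannian exponential map $\exp_{x}\colon T_{x}M\to M$ is $G$-equivariant for the isotropy action on the source and the $G$-action on the target: for each $h\in G$ one has $h\circ\exp_{x}=\exp_{h(x)}\circ\, dh_{x}=\exp_{x}\circ\, dh_{x}$, since isometries fixing $x$ carry geodesics through $x$ to geodesics through $x$. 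This is the content of Lemma \ref{lem:Equivariance of exp}. Choose $\varepsilon>0$ so small that $\exp_{x}$ restricts to a diffeomorphism from the ball $B_{\varepsilon}(0)\subset T_{x}M$ onto a neighbourhood $\mathcal{U}$ of $x$.

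Then I would verify that $\exp_{x}\big(B_{\varepsilon}(0)\cap V\big)=\mathcal{U}\cap M^{G}$. The inclusion $\subseteq$ is immediate from equivariance: if $v\in V$ then $h\cdot\exp_{x}(v)=\exp_{x}(dh_{x}v)=\exp_{x}(v)$ for all $h\in G$. For the reverse inclusion, suppose $y=\exp_{x}(v)\in\mathcal{U}$ is fixed by $G$; then for each $h$ we have $\exp_{x}(dh_{x}v)=h\cdot y=y=\exp_{x}(v)$, and injectivity of $\exp_{x}$ on $B_{\varepsilon}(0)$ forces $dh_{x}v=v$, i.e. $v\in V$. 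Thus $\exp_{x}|_{B_{\varepsilon}(0)}$ is a chart carrying $M^{G}$ onto the linear slice $V$, exhibiting $M^{G}$ as an embedded submanifold near $x$ of dimension $\dim V$. Since $x$ was arbitrary, $M^{G}$ is a smooth submanifold of $M$, and it is closed by Lemma \ref{lem:Fixclosed}.

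The one genuine subtlety, and the step I expect to require the most care, is the equivariance of $\exp_{x}$: it depends on the chosen metric being honestly $G$-invariant, so that every $h\in G$ acts by an isometry and hence commutes with the exponential map. This is exactly what Lemma \ref{lem:Equivariance of exp} supplies; everything else is routine linear algebra and the inverse function theorem. I would also add a brief remark that the local model $V=(T_{x}M)^{G}$ need not have the same dimension on different connected components of $M^{G}$, so $M^{G}$ is in general a closed submanifold whose components may have different dimensions; this does not affect the statement.
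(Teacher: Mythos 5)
Your argument is correct, and the overall strategy is the one the paper uses: linearize the action at a fixed point $x$ via the isotropy representation $h\mapsto dh_{x}$ and exhibit $M^{G}$ near $x$ as the image of the fixed subspace $(T_{x}M)^{G}$ under an equivariant chart, with closedness supplied by Lemma \ref{lem:Fixclosed}. The one substantive difference is the source of the equivariant chart: the paper invokes the equivariant tubular neighborhood theorem (Theorem \ref{thm:Existence of invariant tubular neighborhood}, which it states only for finite $G$) applied to the invariant submanifold $\{x\}$, whereas you build the chart directly as $\exp_{x}$ for a $G$-invariant metric (Lemma \ref{lem:Existence of invariant metric}) together with the equivariance of $\exp$ (Lemma \ref{lem:Equivariance of exp}). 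Your route is self-contained for an arbitrary compact Lie group and makes explicit the point that the paper leaves implicit in the reverse inclusion, namely that $dh_{x}$ is orthogonal, so $dh_{x}v$ stays in the ball where $\exp_{x}$ is injective; the paper's route, by citing the tubular neighborhood theorem as a black box, generalizes immediately from a fixed point to any closed invariant submanifold. Both are fine; no gap.
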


\begin{lem}
  \label{lem:Equivariance of exp}Let $G$ be a compact Lie group, let $M$ be
  a finite dimensional $G$-manifold and let $g$ be a $G$-invariant Riemannian
  metric. Then the associated $exp$ map is $G$-equivariant. \end{lem}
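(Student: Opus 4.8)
The plan is to use the naturality of the exponential map under isometries. Recall the standard fact from Riemannian geometry: if $F:M\to M$ is an isometry of a Riemannian manifold $(M,g)$, then for every $x\in M$ and $v\in T_xM$ we have $F(\exp_x(v))=\exp_{F(x)}(dF_x(v))$, whenever both sides are defined. The reason is that $F$ takes geodesics to geodesics: if $\gamma(t)=\exp_x(tv)$ is the geodesic with $\gamma(0)=x$, $\dot\gamma(0)=v$, then $F\circ\gamma$ is again a geodesic (because $F$ preserves the Levi-Civita connection, being an isometry), with initial point $F(x)$ and initial velocity $dF_x(v)$; by uniqueness of geodesics, $F\circ\gamma(t)=\exp_{F(x)}(t\,dF_x(v))$, and evaluating at $t=1$ gives the claim.

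First I would invoke Lemma \ref{lem:Existence of invariant metric} to fix a $G$-invariant Riemannian metric $g$ on $M$; the hypothesis that $G$ is compact guarantees such a metric exists. By definition of $G$-invariance, for each $g\in G$ the diffeomorphism $x\mapsto gx$ is an isometry of $(M,g)$. Then I would apply the naturality statement above to $F=g(\cdot)$: for $x\in M$ and $v\in T_xM$, we get
\begin{equation*}
g\cdot\exp_x(v)=\exp_{gx}\big((dg)_x(v)\big).
\end{equation*}
Writing the $G$-action on $TM$ in the form $g\cdot(x,v)=(gx,(dg)_x v)$, this is exactly the statement that $\exp$ intertwines the $G$-action on $TM$ with the $G$-action on $M$, i.e. $\exp$ is $G$-equivariant on its domain of definition.

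The only genuine content is the naturality of geodesics under isometries, and the main (minor) obstacle is bookkeeping about domains: $\exp_x$ is only defined on a neighborhood of $0$ in $T_xM$, so I should note that since $g$ is $G$-invariant the domain of $\exp$ (the maximal subset of $TM$ on which it is defined) is itself $G$-invariant — this follows because $g$-action sends geodesics to geodesics defined on the same time interval — so the equivariance identity makes sense wherever either side does. With that remark the proof is complete; no hard estimates are needed.
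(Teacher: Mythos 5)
Your proof is correct and is the standard argument (isometries preserve geodesics, hence commute with $\exp$ by uniqueness of geodesics), which is exactly the argument the paper implicitly relies on by citing Bredon and Kawakubo rather than proving the lemma itself. One minor remark: the lemma already hypothesizes a $G$-invariant metric $g$, so your invocation of Lemma \ref{lem:Existence of invariant metric} is unnecessary here, though harmless.
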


\subsection{Proof of the Blow-down Criterion\label{sub:Proof-blow
    down crit}}

In this section, we prove Theorem \ref{thm:Real blow-down condition}, which we restate for convenience. 

\begin{thm*}[Theorem \ref{thm:Real blow-down condition}]

\end{thm*}

We begin by recalling a version of the
adjunction inequality, as given by Theorem 1.3 in McDuff
\cite{McDuff_1991_JDiffG}.
\begin{thm}
  \label{thm:Adjunction}Let $(M,J)$ be an almost complex 4-manifold and
  $A\in H_{2}(M;\mathbb{Z})$ be a homology class that is represented
  by a somewhere injective (closed) $J$-holomorphic curve $u:\Sigma\to M$.
  Then 
  \begin{equation*}
g\leq1+\frac{1}{2}(A\cdot A-c_{1}(A)),
\end{equation*} with equality iff
  $u$ is an embedding, where
$g$ is the genus of $\Sigma$.\end{thm}

We recall Definition \ref{defn:Exceptional} from Section \ref{ch:Introduction}.
\begin{defn*}
We call $E\in H_2(M^4;\mathbb{Z})$ an \emph{exceptional class} if $E\cdot
E=-1$. 
If $u:\Sigma \hookrightarrow M^4$ is an embedding of the surface $\Sigma$, and
$u_*[\Sigma]=E$, then we say that $u(\Sigma)$ is an \emph{exceptional curve}. 
\end{defn*}

It follows from the adjunction formula that
\begin{cor}
\label{lem:Chern class of exceptional sphere}
Let $M$ be a $4$-manifold, and let $u:\Sigma \hookrightarrow M$ be an 
exceptional $2$-sphere in $M$ such that $u_*[\Sigma] = E \in
H_2(M;\mathbb{Z})$. Then $c_1(u_*[\Sigma])=1$.
\end{cor}
\begin{proof}
Since $u$ is an embedding by definition, $0 = 1 + \frac{1}{2}(E\cdot E -
c_1(E))$, and therefore $c_1(E) = 1$.
\end{proof}

\begin{rem}\label{rem:Inherited involution}Suppose $(M,\omega,\phi)$ is a real
symplectic manifold with an almost complex structure $J$ which tames $\omega$
and satisfies $\phi_*J\phi_*=-J$. Let $u:\Sigma\to M$ be a closed
$J$-holomorphic curve, and suppose it is an embedding whose image is invariant
under $\phi$. Then $\Sigma$ inherits the symplectic form $u^*\omega$ and the
anti-symplectic involution $u^{-1}\circ\phi\circ u$.
\end{rem}

\begin{prop}
  \label{prop:FixS}Let $(S^{2},\omega)$ be endowed with an anti-symplectic
  involution $\phi$. If $\mbox{Fix}(\phi)\neq\emptyset$, then the
  fixed point set of $\phi$ is a circle.\end{prop}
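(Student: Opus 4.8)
\textit{Plan.} The strategy has two stages: first show $\mathrm{Fix}(\phi)$ is a closed one‑dimensional submanifold of $S^2$, hence a disjoint union of $k\ge 1$ circles; then use a Smith‑theory count to force $k=1$.

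For the first stage, regard $\phi$ as generating a $\mathbb{Z}/2$‑action on $S^2$. By Proposition~\ref{prop:Fixman} (with $G=\mathbb{Z}/2$), $\mathrm{Fix}(\phi)$ is a smooth closed submanifold of $S^2$. To compute its dimension at a point $p\in\mathrm{Fix}(\phi)$, consider the differential $d\phi_p$, a linear involution of the two‑dimensional symplectic vector space $(T_pS^2,\omega_p)$ satisfying $(d\phi_p)^{*}\omega_p=-\omega_p$. It cannot equal $\mathrm{Id}$ (otherwise $\phi$ would be the identity on an open set, hence on $S^2$, contradicting $\phi^{*}\omega=-\omega$) and it cannot equal $-\mathrm{Id}$ (since $(-\mathrm{Id})^{*}\omega_p=\omega_p\neq-\omega_p$), so $d\phi_p$ has eigenvalues $+1$ and $-1$, each with a one‑dimensional eigenspace; the $+1$‑eigenspace is $T_p\mathrm{Fix}(\phi)$, which is exactly a Lagrangian line, in accordance with the remark that $\mathrm{Fix}(\phi)$ is Lagrangian. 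Hence $\mathrm{Fix}(\phi)$ is a closed $1$‑manifold, i.e.\ a disjoint union of $k$ circles, with $k\ge 1$ by the hypothesis $\mathrm{Fix}(\phi)\neq\emptyset$.

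For the second stage, I would invoke the Smith--Floyd inequality for $\mathbb{Z}/2$‑actions (see Bredon~\cite{Bredon_1972}): for a smooth involution of a compact manifold $X$,
\[
\sum_i \dim_{\mathbb{F}_2} H^i(\mathrm{Fix};\mathbb{F}_2)\ \le\ \sum_i \dim_{\mathbb{F}_2} H^i(X;\mathbb{F}_2).
\]
With $X=S^2$ the right‑hand side is $1+0+1=2$, while a disjoint union of $k$ circles contributes $k$ to $H^0$ and $k$ to $H^1$, giving $2k$ on the left. Thus $2k\le 2$, so $k=1$, and $\mathrm{Fix}(\phi)$ is a single circle, as claimed.

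I expect the only genuinely delicate point to be the local linear‑algebra step ruling out $d\phi_p=\pm\mathrm{Id}$ and thereby pinning the dimension to $1$; the rest is a direct appeal to cited results. If one prefers to stay inside the equivariant toolkit already developed above rather than cite Smith theory, an alternative for the second stage is to analyze the quotient $\Sigma:=S^2/\phi$: a $\phi$‑invariant metric together with the equivariant $\exp$ map (Lemmas~\ref{lem:Existence of invariant metric} and \ref{lem:Equivariance of exp}) and Theorem~\ref{thm:Existence of invariant tubular neighborhood} show that near each fixed circle $\phi$ is conjugate to a fibrewise reflection, so $\Sigma$ is a compact surface with $\partial\Sigma\cong\mathrm{Fix}(\phi)$ and $S^2\to\Sigma$ is a double cover over $\mathrm{int}\,\Sigma$; comparing Euler characteristics gives $2=\chi(S^2)=2\chi(\Sigma)$, so $\chi(\Sigma)=1$, and the classification of compact surfaces with nonempty boundary forces $\Sigma$ to be a disk, whence $\mathrm{Fix}(\phi)=\partial\Sigma$ is one circle. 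In that route the main technical obstacle is checking that $\Sigma$ genuinely has a smooth boundary structure along the fixed circles.
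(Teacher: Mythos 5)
Your argument is correct, and the first stage coincides with the paper's: both invoke Proposition~\ref{prop:Fixman} to get a closed submanifold and then rule out the eigenvalue patterns $\mathrm{Id}$ and $-\mathrm{Id}$ for $d\phi_p$ to pin the dimension to one (the paper phrases this as ``isotropic, hence $\dim\le 1$, and not $0$ because $d\phi=-\mathrm{Id}$ would give $\phi^*\omega=\omega$''). Where you genuinely diverge is the second stage. The paper argues by hand on the sphere: assuming two or more fixed circles, it decomposes $S^2=D_1\cup C\cup D_2$ along two of them, shows $\phi(D_1)=D_1$ using $\phi^2=\mathrm{Id}$, chooses an innermost fixed circle, and derives a contradiction from the $-1$ eigenvalue of $d\phi$ along that circle, which forces a collar of $D_1$ to be mapped to the other side. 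You instead apply the Smith--Floyd inequality $\sum\dim_{\mathbb{F}_2}H^i(\mathrm{Fix};\mathbb{F}_2)\le\sum\dim_{\mathbb{F}_2}H^i(S^2;\mathbb{F}_2)=2$, so $k$ circles give $2k\le 2$ and $k=1$; your alternative via $\chi(S^2)=2\chi(S^2/\phi)$ and the classification of surfaces with boundary is likewise sound once one checks (as you note, via the equivariant tubular neighborhood) that the quotient is a smooth surface with boundary $\mathrm{Fix}(\phi)$. Your route is shorter and generalizes immediately to involutions of other surfaces, at the cost of importing Smith theory (which the paper's reference to Bredon does cover); the paper's route is self-contained and elementary but is specific to $S^2$ and requires the somewhat delicate innermost-disk/collar argument. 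Both are valid proofs.
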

\begin{proof}
  Let $G=\mathbb{Z}_2$ with smooth actions on $M$ given by the functions\
$\{Id,\phi\}$.  From Proposition \ref{prop:Fixman} we see that
$\mbox{Fix}(G)=\mbox{Fix}(\phi)$ is a closed submanifold of $S^{2}$. Denote
this submanifold by $K$. Now suppose $p\in K$, and let $v,w\in T_{p}K$. Then
$\omega(v,w)=\phi^{*}\omega(v,w)=-\omega(v,w)=0$, and so the fixed
point set is an isotropic
submanifold of $S^2$. By Remark \ref{rem:Fix phi Lagrangian}, $L$ is Lagrangian,
and therefore one-dimensional. 
  $\mbox{Fix}(\phi)$ is therefore equal to a closed Lagrangian submanifold of
  $S^{2}$ and is therefore diffeomorphic to a union of
  non-intersecting circles. This union is compact, and therefore
  finite, since $\mbox{Fix}(\phi)$ is topologically closed and $S^{2}$ is
  compact. 

  Suppose there is more than one circle in
  $\text{Fix}(\phi)$, say $\alpha_{1},...,\alpha_{k}$. Now choose two
  circles, which we denote $\gamma_{1}$ and $\gamma_{2}$.  $S^{2}$
  therefore decomposes as $S^{2}=D_{1}\cup C\cup D_{2}$, where the
  $D_{i}$ are the non-intersecting disks bounded by the $\gamma_{i}$,
  and $C$ is the closed cylinder between the discs. Now consider
  $\phi(D_{1})$.  Since $\phi$ is a diffeomorphism, it must send
  $D_{1}$ onto a disc bounded by $\gamma_{1}$, i.e. either $D_{1}$ or
  $C\cup D_{2}$.  

  Now suppose $\phi(D_{1})=C\cup D_{2}$. Then there is a
  point $x\in D_{1}$ such that
  $\phi(x)\in\gamma_{2}\Rightarrow\phi^{2}(x)\in\gamma_{2}\nsubseteq
  D_{1}$, which contradicts the assumption that $\phi^{2}=\mbox{Id}$. Therefore,
  $\phi(D_{1})=D_{1}$.
  Note that for any $x\in \gamma_1$, one of the eigenvalues of $d\phi(x)$ is
$-1$.  Therefore, there are points in a collar neighborhood of $\gamma_1$ in
$D_1$ which are sent by $\phi$ to a collar neighborhood of $\gamma_1$ in $D_2
\cup C$. However, this contradicts that $\phi(D_1)=D_1$, and concludes the
proof.
\end{proof}
\begin{cor}
  \label{cor:Intersection is a circle}
  Let $(M^{4},\omega,\phi)$ be a real symplectic manifold, and let
  $L:=\text{Fix}(\phi)$.  Let $J$ be an almost complex structure on $M$ such that
  $\phi_{*}J\phi_{*}=-J$, and let $E\in H_{2}(M;\mathbb{Z})$ be an
  exceptional class with $\phi_{*}E=-E$.  Suppose $u:S^2 \to M$ is an embedded
rational $J$-holomorphic curve that represents $E$. Then
  $u(\Sigma)\cap L$ is diffeomorphic to a circle. \end{cor}
\begin{proof}
Note first that
  $\phi\circ u\circ c$ is another $J$-holomorphic embedding
  that represents $E$, and its image is equal to
  $\mbox{Im}(\phi\circ u)$. Suppose now that $Im(u)\neq Im(\phi\circ u)$. Let $c$ denote complex conjugation
  on $\Sigma=S^{2}$. Because both maps $u$ and $\phi\circ u\circ c$ are $J$-holomorphic, their
  intersections are at most countable, and since $[\mbox{Im}(\phi\circ
  u\circ c)]=[\mbox{Im}(u)]=E\in H_{2}(M;\mathbb{Z})$, positivity of
  intersections in dimension $4$ (e.g. Theorem E.1.4 in McDuff and Salamon \cite{McDuff_Salamon_2004}) implies that $0\leq|\{\mbox{Im
  }u\}\cap\{\mbox{Im }\phi\circ u\circ c\}|\leq E\cdot E=-1,$ which
  is a contradiction. Therefore, $\mbox{Im}(u)=\mbox{Im}(\phi\circ
  u)$.  By Remark \ref{rem:Inherited involution}, $u(\Sigma)$ inherits a real
  structure from $M$, and it follows from Proposition \ref{prop:FixS} that the fixed
  point set of $\phi$ restricted to $u(\Sigma)$ is a circle. Since
  $\text{Fix}(\phi)=L\subset M$, it follows that $u(\Sigma)\cap L$ is
  diffeomorphic to a circle.\end{proof}
\begin{lem}
  There is a natural isomorphism between the oriented Lagrangian
  subspaces of $\mathbb{R}^{2n}$ and the quotient space
  $U(n)/SO(n)$.\end{lem}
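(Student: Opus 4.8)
The plan is to exhibit $U(n)/SO(n)$ as the space of oriented Lagrangians by realizing it as an orbit of the natural $U(n)$-action. Throughout, identify $\mathbb{R}^{2n}$ with $\mathbb{C}^n$ via the standard complex structure $i$, and recall that $\omega_0$, the standard inner product $g$, and $i$ form a compatible triple with $g(v,w)=\omega_0(v,iw)$, so that $h(v,w):=g(v,w)+i\,\omega_0(v,w)$ is the standard Hermitian form. Since $U(n)$ preserves $h$, it preserves $\omega_0$ and $g$ simultaneously, hence acts on the set $\widetilde{\Lambda}(n)$ of oriented Lagrangian subspaces of $\mathbb{R}^{2n}$ by $A\cdot(L,\mathrm{or})=(A(L),A_*\mathrm{or})$, and likewise on the unoriented Lagrangian Grassmannian $\Lambda(n)$. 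Let $\mathrm{or}_0$ denote the orientation of the standard Lagrangian $\mathbb{R}^n\subset\mathbb{C}^n$ given by its standard ordered basis $f_1,\dots,f_n$.

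First I would prove transitivity, which is the only step with genuine content. Given $(L,\mathrm{or})\in\widetilde{\Lambda}(n)$, choose a $g$-orthonormal basis $e_1,\dots,e_n$ of $L$, ordered compatibly with $\mathrm{or}$. Because $L$ is Lagrangian, $\omega_0(e_j,e_k)=0$ for all $j,k$, so $h(e_j,e_k)=g(e_j,e_k)=\delta_{jk}$; that is, $e_1,\dots,e_n$ is a Hermitian-orthonormal basis of $\mathbb{C}^n$. Hence there is a unique $A\in U(n)$ carrying $f_j$ to $e_j$, and this $A$ maps $(\mathbb{R}^n,\mathrm{or}_0)$ onto $(L,\mathrm{or})$. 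Thus $U(n)$ acts transitively on $\widetilde{\Lambda}(n)$.

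Next I would compute the stabilizer of $(\mathbb{R}^n,\mathrm{or}_0)$. If $A\in U(n)$ satisfies $A(\mathbb{R}^n)=\mathbb{R}^n$, then since $A$ is $\mathbb{C}$-linear and $\mathbb{R}^n$ spans $\mathbb{C}^n$ over $\mathbb{C}$, $A$ is determined by $B:=A|_{\mathbb{R}^n}$, and $B\in O(n)$ because $A$ preserves $g$; conversely each $B\in O(n)$ extends $\mathbb{C}$-linearly to an element of $U(n)$ fixing $\mathbb{R}^n$. So the stabilizer of the unoriented $\mathbb{R}^n$ is $O(n)$, and the stabilizer of $(\mathbb{R}^n,\mathrm{or}_0)$ is the orientation-preserving subgroup $SO(n)$. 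The orbit–stabilizer map $A\,SO(n)\mapsto(A(\mathbb{R}^n),A_*\mathrm{or}_0)$ is therefore a $U(n)$-equivariant bijection $U(n)/SO(n)\to\widetilde{\Lambda}(n)$, and being equivariant between smooth homogeneous spaces it is a diffeomorphism; since it is built only from the canonical compatible triple $(\omega_0,i,g)$ on $\mathbb{R}^{2n}$, this is the asserted natural isomorphism. The main (indeed the only) subtlety is the observation that a $g$-orthonormal basis of a Lagrangian subspace is automatically a unitary basis of $\mathbb{C}^n$; the identification of the stabilizers with $O(n)$ and $SO(n)$ is then routine.
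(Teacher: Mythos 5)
Your proof is correct and follows essentially the same route as the paper, which simply cites McDuff--Salamon for the fact that a unitary Lagrangian frame determines $\Lambda$ up to right multiplication by $O(n)$ (and, with an orientation, by $SO(n)$) --- precisely the transitivity-plus-stabilizer computation you carry out in full. Your version is just the self-contained form of that citation, with the key point (a $g$-orthonormal basis of a Lagrangian is automatically Hermitian-orthonormal) made explicit.
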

\begin{proof}
  We recall from McDuff and Salamon \cite{McDuff_Salamon_Intro_1998} that the unitary matrix
  $U=X+iY$ given by a unitary Lagrangian frame is determined by the
  Lagrangian subspace $\Lambda$ up to right multiplication by a matrix
  in $O(n)$. Similarly, given an orientation $o(\Lambda)$ of
  $\Lambda$, we see that $U$ is determined by $(\Lambda,o(\Lambda))$
  up to right multiplication by a matrix in $SO(n)$.\end{proof}

\begin{lem}
  \label{lem:Odd maslov non-trivial bundle}Let $u:(D,\partial
  D)\to(M,L)$ be a $J$-holomorphic disk with boundary on a Lagrangian
  $L$. Suppose the Maslov index of $u$, $\mu(u)$, satisfies $\mu(u)\mbox{ mod }2=1$. Then
  $TL|_{\partial D}$ is a non-trivial bundle.\end{lem}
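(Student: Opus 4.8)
The plan is to identify the bundle in question with a Lagrangian subbundle of a trivial symplectic bundle over $S^1$ and then invoke the classical dictionary between the parity of the Maslov index and orientability. First I would set $\gamma:=u|_{\partial D}\colon S^1\to L$ and read ``$TL|_{\partial D}$'' as the pulled-back real rank-$n$ bundle $\gamma^*TL$ over $S^1$. Since real vector bundles over $S^1$ are classified by $\pi_0(O(n))\cong\mathbb Z/2$, such a bundle is trivial precisely when it is orientable; so it suffices to prove that $\gamma^*TL$ is non-orientable.

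Next I would trivialize. Because $D$ is contractible, $u^*TM$ admits a trivialization $\Phi\colon u^*TM\xrightarrow{\ \sim\ }D\times\mathbb R^{2n}$ as a symplectic vector bundle. Restricting to $\partial D$, the map $\Phi$ carries the Lagrangian subbundle $\gamma^*TL\subset u^*TM|_{\partial D}$ isomorphically (as a real bundle) onto a Lagrangian subbundle $\mathcal E\subset S^1\times\mathbb R^{2n}$, equivalently onto a loop $\Lambda\colon S^1\to\Lambda(n)$ in the Lagrangian Grassmannian $\Lambda(n)=U(n)/O(n)$; by the definition of the Maslov index, $\mu(u)=\mu(\Lambda)=\deg(\det^2\circ\Lambda)$. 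It is therefore enough to show $\mathcal E$ is non-orientable when $\mu(\Lambda)$ is odd.

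Then I would use the Lemma proved just above: the oriented Lagrangian subspaces of $\mathbb R^{2n}$ form $\widetilde\Lambda(n)=U(n)/SO(n)$, and the forgetful map $p\colon\widetilde\Lambda(n)\to\Lambda(n)$ is a connected double cover (because $U(n)$ is connected and $O(n)/SO(n)\cong\mathbb Z/2$); reading the definition of $U(n)/SO(n)$ as Lagrangian frames modulo orientation-preserving base change identifies $p$ with the orientation double cover of the tautological Lagrangian bundle over $\Lambda(n)$. Since orientation double covers commute with pullback, $\mathcal E=\Lambda^*(\text{tautological bundle})$ is orientable iff $\Lambda^*p$ is a trivial double cover, i.e.\ iff $\Lambda$ lifts to a loop in $\widetilde\Lambda(n)$. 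Finally, using $\pi_1(\Lambda(n))\cong\mathbb Z$ with the Maslov index as the isomorphism, the subgroup $p_*\pi_1(\widetilde\Lambda(n))$ has index two and hence equals $2\mathbb Z$, the classes of even Maslov index; so $\Lambda$ lifts iff $\mu(\Lambda)$ is even. Since $\mu(u)$ is odd by hypothesis, $\mathcal E$, and hence $TL|_{\partial D}$, is non-orientable and therefore non-trivial.

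The main obstacle is the last step's bookkeeping: verifying that $U(n)/SO(n)\to U(n)/O(n)$ is genuinely the orientation cover — so that ``$\Lambda$ lifts'' really does translate to ``$\mathcal E$ is orientable'' — and that it corresponds to the even-Maslov subgroup. The second point is automatic once one knows $\pi_1(\Lambda(n))\cong\mathbb Z$ (which has a unique index-two subgroup), and it is cleanest to take this computation as known rather than deriving it from $\pi_1(SO(n))$, since that route has the usual low-dimensional exceptions; the first point is a direct unwinding of the identification in the preceding Lemma.
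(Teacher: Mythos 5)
Your proposal is correct and takes essentially the same route as the paper: both reduce to the fact that the double cover $U(n)/SO(n)\to U(n)/O(n)$ of the preceding lemma realizes the index-two (even-Maslov) subgroup of $\pi_1(U(n)/O(n))\cong\mathbb{Z}$, so that a loop of Lagrangians admitting a continuous orientation has even Maslov index. Your version is marginally tidier in making explicit that a real bundle over $S^1$ is trivial iff orientable and in taking $\pi_1(\Lambda(n))\cong\mathbb{Z}$ as known rather than chasing the homotopy exact sequences, but the underlying argument is the same.
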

\begin{proof}
  Consider the commutative diagram\[
  \xymatrix{\pi_{1}(SO(n))\ar[r]_{i}\ar[d]_{0} & \pi_{1}(O(n))\ar[d]_{0}\\
    \pi_{1}(U(n))\ar[r]_{\cong}\ar[d] & \pi_{1}(U(n))\cong \mathbb{Z}\ar[d]\\
    \pi_{1}(U(n)/SO(n))\ar[r]_{\alpha}\ar[d] & \pi_{1}(U(n)/O(n))\cong \mathbb{Z}\ar[d]_{\beta}\\
    0\ar[r]\ar[d] & \mathbb{Z}_2\ar[d]\\
    0\ar[r] & 0}
  \]
  Note that the vertical exact sequences in the diagram are taken from the respective homotopy
  long exact sequences. Note that it follows from the diagram that the
  map $\beta$ is onto, and therefore that the map $\alpha$ is
  multiplication by $2$. Identifying the Maslov class of a loop
  $\gamma$ of Lagrangians with $[\gamma]\in \pi_1(U(n)/O(n))$, we see
  that the Maslov class of any loop $\gamma$ of oriented Lagrangians
  is even. Now consider a trivialization $\Phi:u^*TM\to D\times
  \mathbb{C}^n$. If $TL|_{\partial D}$ is trivial, then the loop of
  Lagrangians $\Lambda\circ \Phi|_{\partial D}\to U(n)/O(n)$
  is a loop of oriented Lagrangians, and therefore $\mu(u)$ is
  even. This concludes the proof. \end{proof}
\begin{lem}
  \label{lem:Non-trivial bundle in Lagrangian}Let $(M,\omega,\phi)$ be a
  four-dimensional real symplectic manifold with real structure
  $\phi$. Denote the fixed point set of $\phi$ by $L$, and let $E\in
  H_{2}(M;\mathbb{Z})$ be a homology class such that $E\cdot
  E=-1$. Suppose $u:(\mathbb{C}P^{1},\sigma,i)\to(M,\omega,J)$ is a J-holomorphic embedding such that $u_{*}[\mathbb{C}P^{1}]=E,$ and such
  that the intersection $Im(u)\cap L\cong S^{1}$. Then the intersection of $TL$ with the normal bundle of
  $Im(u)$, i.e. $TL\cap\nu(Im(u))$, is nontrivial.\end{lem}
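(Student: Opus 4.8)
The plan is to cut the sphere $\mathrm{Im}(u)$ along the circle $C:=\mathrm{Im}(u)\cap L$ into two $J$-holomorphic disks with boundary on $L$, to show that each of these disks has odd Maslov index, and then to apply Lemma \ref{lem:Odd maslov non-trivial bundle}.

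First I would reduce the statement to one about $TL|_C$. Since $\mathrm{Im}(u)$ is an embedded $J$-holomorphic, hence symplectic, surface and $L$ is Lagrangian, at each $p\in C$ one cannot have $T_p(\mathrm{Im}\,u)\subseteq T_pL$ (that would force the Lagrangian plane $T_pL$ to equal the symplectic plane $T_p(\mathrm{Im}\,u)$), so $T_pL\cap T_p(\mathrm{Im}\,u)$ is exactly one-dimensional. Hence $L$ and $\mathrm{Im}(u)$ meet cleanly along $C$, $C$ is a $1$-submanifold, and the natural map $TL|_C\to\nu(\mathrm{Im}\,u)|_C$ has kernel $TC$; its image is the line bundle $TL\cap\nu(\mathrm{Im}\,u)$ of the statement, which is isomorphic to $TL|_C/TC$. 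Since $TL|_C\cong TC\oplus(TL|_C/TC)$ and $TC\to C\cong S^1$ is trivial, it is enough to prove that $TL|_C$ is a non-trivial bundle over $C$.

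Now I would bring in the real structure. Using $\phi_*J\phi_*=-J$ (as chosen in our applications) together with the $\phi$-invariance of $\mathrm{Im}(u)$ — which holds exactly as in Corollary \ref{cor:Intersection is a circle}, by positivity of intersections — Remark \ref{rem:Inherited involution} gives that $\tau:=u^{-1}\circ\phi\circ u$ is an anti-holomorphic involution of $(\mathbb{C}P^1,i)$ whose fixed-point set $u^{-1}(C)$ is a circle (Proposition \ref{prop:FixS}). An anti-holomorphic involution of $\mathbb{C}P^1$ with non-empty fixed locus is holomorphically conjugate to complex conjugation $c$, so I may assume $\tau=c$, $\mathrm{Fix}(c)=\mathbb{R}P^1=:\gamma$, and $\mathbb{C}P^1=D_+\cup_\gamma D_-$ with $c(D_+)=D_-$. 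Set $u_\pm:=u|_{D_\pm}:(D_\pm,\gamma)\to(M,L)$. Gluing the bundle pairs $(u_\pm^{*}TM,\;u^*TL|_\gamma)$ along $\gamma$ reconstitutes $u^*TM$ over $S^2$, and the standard gluing/additivity property of the Maslov index yields $\mu(u_+)+\mu(u_-)=2\langle c_1(u^*TM),[S^2]\rangle=2\,c_1(E)$. On the other hand $u_+=\phi\circ u_-\circ (c|_{D_+})$; since $c|_{D_+}:D_+\to D_-$ reverses orientation and $\phi$ is an anti-holomorphic diffeomorphism preserving $L$ and mapping Lagrangian planes to Lagrangian planes, the Maslov index picks up two sign changes, so $\mu(u_+)=\mu(u_-)$. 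Finally $c_1(E)=1$ because $E$ is exceptional and $u$ is an embedded $J$-holomorphic sphere (Remark \ref{rem:Adjunction remark}: $c_1(E)=c_1(T\mathbb{C}P^1)+c_1(\nu)=2-1$). Combining the three facts gives $2\mu(u_+)=2$, so $\mu(u_+)$ is odd.

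With $\mu(u_+)$ odd, Lemma \ref{lem:Odd maslov non-trivial bundle} applied to $u_+:(D_+,\gamma)\to(M,L)$ shows that $TL|_\gamma$ is non-trivial; since $u|_\gamma$ is a diffeomorphism from $\gamma$ onto $C$, this is precisely the statement that $TL|_C$ is non-trivial, which by the second paragraph is the assertion of the lemma. The step I expect to be the main obstacle is the Maslov computation in the third paragraph: establishing the gluing identity $\mu(u_+)+\mu(u_-)=2c_1(E)$ and, in particular, the symmetry $\mu(u_+)=\mu(u_-)$, keeping careful track of orientation conventions and of the fact that post-composition by an anti-holomorphic, Lagrangian-preserving diffeomorphism negates the Maslov index. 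A secondary point requiring care is the verification that $\mathrm{Im}(u)$ is $\phi$-invariant, so that the induced anti-holomorphic involution $\tau$ on $\mathbb{C}P^1$ actually exists.
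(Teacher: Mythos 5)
Your proof is correct and follows essentially the same route as the paper: cut the sphere along the fixed circle into two disks, use $\mu(u_1)+\mu(u_2)=2c_1(E)=2$ together with the $\phi$-symmetry $\mu(u_1)=\mu(u_2)$ to get Maslov index $1$ for each disk, and then invoke Lemma \ref{lem:Odd maslov non-trivial bundle}. Your write-up is in fact somewhat more careful than the paper's, since you make explicit both the reduction from $TL\cap\nu(\mathrm{Im}\,u)$ to $TL|_C$ and the $\phi$-invariance of $\mathrm{Im}(u)$ that the paper uses tacitly.
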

\begin{proof}
  We note that $c_{1} (u^*TM)=2-1=1,$ and that the Maslov number of
  $u=2c_{1}(E)=2$.  Let $u_1,u_2:D^2\to M$ denote the two disks which make up $u$. We claim that the Maslov index of each disc
  must be $1$. First, recall that
  $\mu(u_1)+\mu(u_2)=\mu(u)$ by the properties of the Maslov index. Second, the involution $\phi:M\to M$ induces a diffeomorphism from $Im(u_1)$ to $Im(u_2)$, and $\phi_*:TM\to TM$ is a vector bundle isomorphism from $u_1^*TM$ to $u_2^*TM$. Again, the properties of the Maslov index (see Theorem C.3.5 in McDuff and Salamon \cite{McDuff_Salamon_2004}) imply that $\mu(u_1)=\mu(u_2)$, and this implies that possibilities other than $(1,1)$
  for the Maslov indices of the two discs may not occur. It follows that
  that the bundle $T_{S^{1}}L=TS^{1}\oplus\nu_{L}(S^{1})$ is
  non-trivial by Lemma \ref{lem:Odd maslov non-trivial bundle}, where $\nu_{TL}(S^{1})$ denotes the part of
  the normal bundle of $S^{1}$ which lies in $TL$. Since $TS^{1}$ is
  trivial, then $\nu_{L}(S^{1})$ cannot be, and the lemma is
  proved.\end{proof}
\begin{lem}
\label{lem:Non-empty intersection}
  Let $M$ be a four-dimensional real symplectic manifold with real
  structure $\phi$. Denote the fixed point set of $\phi$ by $L$, and
  let $E\in H_{2}(M;\mathbb{Z})$ be a homology class such that $E\cdot
  E=-1$  and $\phi_{*}E=-E$. Suppose, furthermore, that there exists an embedding of the surface $\Sigma$, $i:\Sigma\to M$, with
  $i_{*}[\Sigma]=E$. Then $E\cdot L=1$ mod
  $2$.\end{lem}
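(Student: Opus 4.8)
The plan is to compute $E\cdot L\bmod 2$ directly from the given embedded representative together with its image under $\phi$; the integral hypothesis $\phi_{*}E=-E$ is exactly what forces this intersection number to be odd. Write $S:=i(\Sigma)$, an embedded closed surface with $[S]=E$, so that $\phi(S)$ is also an embedded closed surface with $[\phi(S)]=\phi_{*}E=-E$. The three ingredients are: (i) a generic perturbation making $S$ transverse to $\phi(S)$; (ii) the observation that such a perturbation is then automatically transverse to $L$, because $S\cap L\subseteq S\cap\phi(S)$; and (iii) the fact that $\phi$ acts on $S\cap\phi(S)$ with fixed-point set exactly $S\cap L$, so the two sets have the same parity.

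For (i) I would apply a $C^{\infty}$-small ambient isotopy to $S$, keeping it embedded and in the class $E$, so that $S$ becomes transverse to $\phi(S)$. Some care is needed since $\phi(S)$ moves with $S$: near $q\in S\cap\phi(S)$ with $q\notin L$ one has $\phi(q)\neq q$, so $S$ near $q$ and $S$ near $\phi(q)$ may be perturbed independently; near $p\in S\cap\phi(S)\cap L$ one has $\phi(p)=p$ and hence $T_{p}\phi(S)=d\phi_{p}(T_{p}S)$. Using the splitting $T_{p}M=T_{p}L\oplus\nu_{p}$, where $\nu_{p}$ is the $(-1)$-eigenspace of $d\phi_{p}$ and $d\phi_{p}=\mathrm{id}\oplus(-\mathrm{id})$, a short computation shows that $\{P\in\mathrm{Gr}(2,T_{p}M):P\cap d\phi_{p}(P)=0\}$ is precisely the complement of the two codimension-one Schubert cycles $\{P:P\cap T_{p}L\neq0\}$ and $\{P:P\cap\nu_{p}\neq0\}$, hence open and dense; so transversality at the fixed points can also be achieved, and a standard parametric transversality argument produces the perturbation. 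After it, $S$ and $\phi(S)$ meet transversally in a finite set, and $\#(S\cap\phi(S))\equiv[S]\cdot[\phi(S)]=E\cdot(-E)=-(E\cdot E)=1\pmod 2$.

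For (ii), note that $p\in S\cap L$ implies $p\in S$ and $p=\phi(p)\in\phi(S)$, so $S\cap L\subseteq S\cap\phi(S)$; and if $0\neq w\in T_{p}S\cap T_{p}L$ at such a $p$, then $d\phi_{p}w=w$ gives $w\in T_{p}S\cap d\phi_{p}(T_{p}S)=T_{p}S\cap T_{p}\phi(S)$, contradicting the transversality of (i). Hence $S$ is transverse to $L$, the set $S\cap L$ is finite, and $\#(S\cap L)\equiv E\cdot L\pmod 2$. For (iii), $\phi$ maps $S\cap\phi(S)$ to itself ($q=\phi(s)$ with $s\in S$ forces $\phi(q)=s\in S$, while $q\in S$ forces $\phi(q)\in\phi(S)$) and fixes $q\in S\cap\phi(S)$ exactly when $q\in L$; thus $(S\cap\phi(S))\setminus(S\cap L)$ is a free $\mathbb{Z}_{2}$-set, of even cardinality, so $\#(S\cap L)\equiv\#(S\cap\phi(S))\pmod 2$. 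Combining (i)--(iii) gives $E\cdot L\equiv1\pmod 2$.

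The main obstacle is step (i): making $S$ transverse to $\phi(S)$ while respecting the unavoidable inclusion $S\cap L\subseteq S\cap\phi(S)$. The linear-algebra check above shows the only possible obstruction at a fixed point $p$ is the non-generic event that $T_{p}S$ meets $T_{p}L$ or the $(-1)$-eigenspace $\nu_{p}$, so a parametric transversality argument handles it; the remaining steps are bookkeeping. This argument is the combinatorial shadow of the holomorphic picture behind Corollary \ref{cor:Intersection is a circle} and Lemma \ref{lem:Non-trivial bundle in Lagrangian}, where a symmetric $J$-holomorphic sphere representing $E$ automatically has $\phi$-invariant image meeting $L$ in a single circle along which the normal direction inside $L$ is non-orientable --- which is precisely the mod-$2$ count obtained here.
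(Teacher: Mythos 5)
Your proposal is correct and follows essentially the same route as the paper: perturb to put $S=i(\Sigma)$ in general position, note that the off-$L$ points of $S\cap\phi(S)$ come in $\phi$-pairs while the fixed points are exactly $S\cap L$, and compare parities with $E\cdot\phi_{*}E=-E\cdot E=1\bmod 2$. The only difference is that you carefully justify the perturbation step (the Grassmannian analysis at fixed points of $\phi$ and the observation that transversality to $\phi(S)$ forces transversality to $L$), which the paper simply asserts by saying the intersections can be made generic.
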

\begin{proof}
  First, we perturb $i$ so that $i(\Sigma)\cap L$ and
  $i(\Sigma)\cap\phi\circ i(\Sigma)$ are generic. Let $p\in
  i(\Sigma)\cap\phi\circ i(\Sigma),$ $p\notin L.$ Then $\phi(p)\in
  i(\Sigma)\cap\phi\circ i(\Sigma)$, $\phi(p)\notin L,$ and, in particular,
  $p$ and $\phi(p)$ do not affect the value of either $E\cdot E$ mod $2$ or $E\cdot L$ mod $2$. Suppose now
  that $E\cdot L=0$ mod $2.$ Then there exist an even number of points
  in the intersection $i(\Sigma)\cap L,$ and, combined with the above,
  this implies that there are an even number of points in
  $i(\Sigma)\cap\phi\circ i(\Sigma).$ However,
  $i_{*}[\Sigma]\cdot\phi_{*}i_{*}[\Sigma]=1$ mod $2,$ which is a
  contradiction. Therefore $E\cdot L=1$ mod $2.$
\end{proof}

We recall a version of the Riemann Mapping Theorem from \cite{Pommerenke_1992} (see also \cite{Burckel_1979}).
\begin{thm}
\label{thm:Caratheodory}
Let $D$ denote the unit disk in $\mathbb{C}$, let $\Omega$ be a simply connected domain in $\mathbb{C}$, $(\Omega\neq \mathbb{C})$, and assume that the boundary $\partial \Omega$ is locally connected. Then there is a holomorphic isomorphism $f:D\to \Omega$ that extends to a continuous map from $\bar{D} \to \bar{\Omega}$. Moreover, if $\partial \Omega$ is a Jordan curve, then $f$ extends to a homeomorphism from $\bar{D}$ to $\bar{\Omega}$.
\end{thm}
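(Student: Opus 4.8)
The statement is the classical Riemann mapping theorem together with Carath\'eodory's theorem on the continuity of the boundary correspondence, and in the paper it is simply invoked by citation; nonetheless, here is how a self-contained argument proceeds. There are two essentially separate ingredients: (i) the existence of a biholomorphism $f\colon D\to\Omega$, and (ii) the extension of $f$ to the closures under the stated hypotheses on $\partial\Omega$. I would present them in that order.

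For (i), the plan is the standard normal-families argument. Fix a basepoint $a\in\Omega$ and let $\mathcal F$ be the family of injective holomorphic maps $g\colon\Omega\to D$ with $g(a)=0$ and $g'(a)>0$. One first checks $\mathcal F\neq\emptyset$: since $\Omega\neq\mathbb C$ is simply connected, a holomorphic square root of $z-b$ (for some fixed $b\notin\Omega$) exists, and Koebe's trick produces an injective map of $\Omega$ into a disk, which a disk automorphism normalizes into $\mathcal F$. Next, $M:=\sup_{g\in\mathcal F}g'(a)$ is finite (Cauchy estimates) and is attained: Montel's theorem gives a locally uniformly convergent subsequence, and Hurwitz's theorem guarantees the limit is still injective, hence lies in $\mathcal F$. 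Finally, the extremal $f$ is onto: if $w_0\in D\setminus f(\Omega)$, composing $f$ with a Blaschke factor killing $w_0$, a holomorphic square root, and a further Blaschke factor yields a competitor in $\mathcal F$ with strictly larger derivative at $a$, a contradiction. This produces the holomorphic isomorphism $f\colon D\to\Omega$.

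For (ii), the plan is to use local connectivity of $\partial\Omega$ to control the oscillation of $f$ near $\partial D$. The key device is the Courant--Lebesgue length--area lemma: for each $\zeta\in\partial D$ and each small $\rho>0$ there is $r\in(\rho,\sqrt{\rho})$ such that the $f$-image of the circular crosscut $\{z\in D:|z-\zeta|=r\}$ has length $O\big(1/\sqrt{\log(1/\rho)}\big)$, hence tends to $0$ along a sequence $\rho_n\to 0$. The two endpoints of such a crosscut then converge to points of $\partial\Omega$ at distance $o(1)$, and local connectivity of $\partial\Omega$ forces the corresponding boundary arc of $D$ to have image of diameter $o(1)$; this shows the cluster set of $f$ at each $\zeta\in\partial D$ is a single point, so $f$ extends continuously to $\bar D\to\bar\Omega$, and the extension is surjective by compactness. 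When $\partial\Omega$ is a Jordan curve, I would upgrade continuity to a homeomorphism: if the extended boundary map $\partial D\to\partial\Omega$ identified two distinct points, joining them by a crosscut of $D$ produces a Jordan curve in $\bar\Omega$ that separates $\Omega$ in a manner incompatible with the injectivity of $f$ on $D$; hence the boundary map is a continuous bijection of compacta, so a homeomorphism, and therefore $f\colon\bar D\to\bar\Omega$ is a homeomorphism.

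I expect the main obstacle to be part (ii), and within it the implication ``local connectedness of $\partial\Omega$ $\Rightarrow$ single-point cluster sets at every boundary point of $D$'': making the Courant--Lebesgue estimate interact cleanly with the point-set topology of a merely locally connected boundary (rather than a Jordan curve) is the technical heart, and is cleanest when phrased in the language of prime ends or via the Hahn--Mazurkiewicz description of $\partial\Omega$ as a continuous image of a circle. The existence half (i) is routine.
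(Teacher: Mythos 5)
The paper does not prove this statement at all: it is the classical Riemann mapping theorem combined with Carath\'eodory's boundary-extension theorem, and the paper simply quotes it from Pommerenke (with a pointer to Burckel), exactly as you suspected. So there is no in-paper argument to compare against; what you have written is a reconstruction of the standard textbook proof, and its skeleton is correct. Part (i) (Koebe's square-root trick to show the normalized injective family is nonempty, Montel and Hurwitz to extract an extremal element, and the Blaschke-factor competitor argument for surjectivity) is the canonical argument and is fine as stated. Part (ii) is also the standard route (Courant--Lebesgue crosscuts of shrinking length, then local connectivity to collapse cluster sets, then the crosscut/Jordan-curve argument for injectivity of the boundary map). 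The one place your sketch is looser than a complete proof is the step you yourself flag: passing from ``the two crosscut endpoints land at nearby points of $\partial\Omega$'' to ``the image of the small boundary arc has small diameter'' requires the \emph{uniform} version of local connectedness (for a compact set this is equivalent to local connectedness, but that equivalence has to be invoked), and one must then argue that the small connected boundary piece together with the short crosscut image encloses the component of $\Omega$ cut off by the crosscut, so that component has small diameter. That is precisely the content of Carath\'eodory's theorem as presented in Pommerenke, so your plan is sound; it is just not yet a complete proof at that point.
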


We now prove Theorem \ref{thm:Real blow-down condition}.

\begin{proof}[Proof of Theorem \ref{thm:Real blow-down condition}]
  Let $u:\Sigma \to M$ be the embedded $J$-holomorphic curve whose image is $C$.
By hypothesis, $[C]\cdot [C]=-1$ so Lemma \ref{lem:Non-empty intersection}
implies that $C\cap L\neq \emptyset$.  By Corollary \ref{cor:Intersection is a
    circle}, $C$ intersects $L$ in a circle, whose preimage we denote
  $S$. Let $D_1$ and $D_2$ be the two open discs in $C$ with boundary
  $S$. Note that, for each $x\in D_1$, $\phi(x)\in D_2$. Now let $H_1$
  and $H_2$ denote the two hemispheres of $\mathbb{C}P^1$ with
  boundary $\mathbb{R}P^1$. By Theorem \ref{thm:Caratheodory}
  there exists a holomorphic map $\alpha:D_1\to H_1$ which extends to a homeomorphism from $\bar{D}_1$ to $\bar{H}_1$.  Now define a map
  $\tilde{\alpha}:C \to \mathbb{C}P^1$ by
\begin{equation}
\tilde{\alpha}(x)= 
\begin{cases}
  \alpha(x) & \text{if } x\in \bar{D}_1 \\
  c\circ \alpha(\phi(x)) & \text{if } x\in D_2, \\
\end{cases}
\end{equation}
where $c$ denotes complex conjugation on $\mathbb{C}P^1$. We claim that $\tilde{\alpha}$ is holomorphic on all of $\mathbb{C}P^1$. First, choose a holomorhpic chart $\gamma_1:W\subset C \to \mathbb{C}$ centered at a point $x\in \mathbb{R}P^1$ which sends $U\cap \mathbb{R}P^1$ to $\mathbb{R}$. Let $\gamma_2:V\subset \mathbb{C}P^1\to \mathbb{C}$ be a holomorphic chart centered at $\tilde{\alpha}(x)\in V$, and note that $\tilde{\alpha}$ is holomorphic iff $\gamma_2\circ\tilde{\alpha}\circ \gamma_1^{-1}$ is holomorphic for any pair of charts. To prove that this is the case, we appeal to Morera's theroem, which we recall below, as stated in Conway \cite{Conway_1978}, following the proof of the Schwartz Reflection Principle.

{ \allowdisplaybreaks
\begin{thm}[Morera's Theorem]
\label{thm:Morera's theorem}
Let $U$ be a region in $\mathbb{C}$ and let $f:U\to \mathbb{C}$ be a continuous function such that $\int_T f=0$ for every triangular path $T$ in $U$. Then $f$ is analytic in $U$.
\end{thm}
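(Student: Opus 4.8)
The plan is to establish analyticity locally and then bootstrap using the regularity theory of holomorphic functions. First I would fix a point $z_0 \in U$, choose an open disk $D = D(z_0,r) \subset U$, and note that since $D$ is convex the segment $[z_0,z]$ lies in $D$ for every $z \in D$; this lets me define $F : D \to \mathbb{C}$ by $F(z) = \int_{[z_0,z]} f(\zeta)\,d\zeta$.

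The key step is to show that $F$ is holomorphic on $D$ with $F' = f$. Given $z, z+h \in D$, the solid triangle with vertices $z_0$, $z$, $z+h$ is contained in $D \subset U$ by convexity, so applying the hypothesis to its boundary triangular path yields $F(z+h) - F(z) = \int_{[z,z+h]} f(\zeta)\,d\zeta$. Since $\int_{[z,z+h]} d\zeta = h$, the difference quotient satisfies $\frac{F(z+h) - F(z)}{h} - f(z) = \frac{1}{h}\int_{[z,z+h]}\bigl(f(\zeta) - f(z)\bigr)\,d\zeta$. Continuity of $f$ at $z$ gives, for each $\varepsilon > 0$, a $\delta > 0$ with $|f(\zeta) - f(z)| < \varepsilon$ on $D(z,\delta)$, and the standard $ML$-estimate over the segment $[z,z+h]$ (of length $|h|$) then bounds the right-hand side by $\varepsilon$ whenever $|h| < \delta$; hence $F'(z) = f(z)$.

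Finally I would invoke the fact that a function which is complex differentiable on an open set is automatically analytic there (via Cauchy's integral formula, or Goursat's lemma), so $F$ is analytic on $D$, and therefore so is its derivative $F' = f$. As $z_0 \in U$ was arbitrary, $f$ is analytic on all of $U$.

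The only step carrying genuine content is this last one, the passage from single complex differentiability of $F$ to infinite differentiability of $F$ and hence of $f = F'$; since Morera's theorem is being quoted here from Conway, I would simply cite the standard regularity theory for holomorphic functions rather than reprove it. Everything else is the elementary construction of a local primitive from the triangle-vanishing condition together with an $\varepsilon$--$\delta$ estimate, and I do not anticipate any obstacle there.
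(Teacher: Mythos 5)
Your proof is correct and is the standard argument: build a local primitive $F$ on a disk via $F(z)=\int_{[z_0,z]}f$, use the triangle hypothesis plus convexity to get $F'=f$, and then invoke the regularity of holomorphic functions to conclude $f=F'$ is analytic. The paper does not prove this statement at all --- it simply quotes Morera's theorem from Conway --- and the argument you give is essentially the one in that cited source, so there is nothing to reconcile.
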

To apply this theorem, we need to show that for each triangular path $T\subset U$, $\int_T f=0$.
Denote $\gamma_1^{-1}(U)$ by $U$, let $U^+=U\cap \{z|Im(z)>0\}$, $U^0=\{z|Im(z)=0\}$, $U^-=\{z|Im(z)<0\}$, and $f:=\gamma_2\circ \tilde{\alpha}\circ \gamma^{-1}:U\to \mathbb{C}$. Choose a triangular path $T$ in $U$. We see that $\int_T f=0$ iff $\int_P f=0$ for any triangular or quadrilateral path $P$ in $U^+ \cup U^0$ and $U^-\cup U^0$. Furthermore, if $P \subset U^\pm$, then $\int_P f=0$, since $f$ is holomorphic on $U^\pm$ by definition. We therefore let $T$ be the triangle with vertices $[a,b,c]$, where the edge $[b,c]$ is contained in the real axis. The same argument applies for a quadrilateral path. Let $\Delta$ denote the union of the path $T$ and its interior. $f$ is continuous on $U^+\cup U^0$ by construction, and therefore it is uniformly continuous on $\Delta$. Therefore, for any $\epsilon>0$ there exists a $\delta>0$ such that $|z-z^{'}|<\delta \implies |f(z)-f(z^{'})|<\epsilon$. Now choose a small $\epsilon>0$, and a $\delta>0$ such that $0<\delta < \epsilon$ and $|z-z^{'}|<\delta \implies |f(z)-
f(z^{'})|<\epsilon$. Pick points $\alpha$ and $\beta$ on the line segments $[a,b]$ and $[a,c]$, respectively, so that $|c-\alpha|<\delta$ and $|b-\beta|<\delta$. Let $T^{'}$ and $Q$ be the paths $T^{'}=[\alpha,\beta,a,\alpha]$ and $Q=[\alpha,c,b,\beta,\alpha]$ as in Figure \ref{fig:Figure 1} below.
\renewcommand{\thefigure}{\thesection.\arabic{figure}} 
\captionsetup{labelsep=none}
\begin{figure}
\centering
\includegraphics{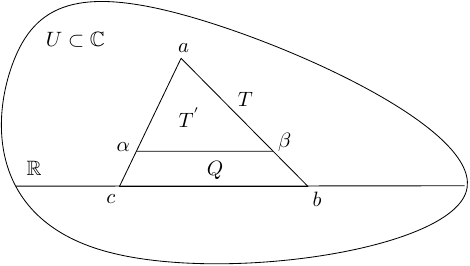}
\caption{\label{fig:Figure 1}}
\end{figure}
Then 
\begin{equation*}
\int_T f=\int_{T^{'}} f+\int_{Q}f.
\end{equation*}
However, since $T^{'}$ and its interior are contained in $U^+$, $f$ is holomorphic there, and therefore $\int_{T^{'}} f=0$. 

We now approximate $\int_{Q} f$. First, note that, for $t\in [0,1]$,
\begin{equation*}
  \left|[t\beta + (1-t)\alpha]-[tb + (1-t)c]\right|<\delta
\end{equation*}
and therefore
\begin{equation*}
\left|f(t\beta + (1-t)\alpha)-f(tb + (1-t)c)\right|<\epsilon.
\end{equation*}
Now let $M=\text{max }\{|f(z)| \text{ }|\, z\in \Delta\}$, and let $l=$ the length of the perimeter of $T$. Then
\begin{align*}
\left|\int_{[\alpha,c]} f\right| &\leq M\left|c-\alpha\right| \leq M\delta\\
\left|\int_{[\beta,b]} f\right| &\leq M\left|b-\beta\right| \leq M\delta,
\end{align*}
and
\begin{align*}
\left|\int_{[b,c]}f+\int_{[\beta,\alpha]}f\right| & = \left| (b-c)\int_0^1
f(tb + (1-t)c) dt \right.\\
& \left. + (\alpha-\beta)\int_0^1 f(t\beta+(1-t)\alpha) dt
\vphantom{\int_{[\beta,\alpha]}}\right|\\
& \leq |b-c|\left|\int_0^1 f(tb + (1-t)c)-f(t\beta+(1-t)\alpha)\right|\\
& \vphantom{\int_{[\beta,\alpha]}}+ |(b-c)-(\beta-\alpha)|\left|\int_0^1
f(t\beta + (1-t)\alpha) dt \right|\\
& \vphantom{\int_{[\beta,\alpha]}}\leq \epsilon|b-c| + M|(b-\beta)+(c-\alpha)|\\
& \vphantom{\int_{[\beta,\alpha]}}\leq \epsilon l + 2M\delta.
\end{align*}
Therefore,
\begin{equation*}
\left|\int_T f\right| \leq \epsilon l + 4M\delta.
\end{equation*} }
Since $\epsilon$ is arbitrary, and we may choose $\delta < \epsilon$, it follows that $\int_T f=0$, and therefore $f$ is holomorphic. From this we conclude that $\tilde{\alpha}$ is holomorphic as well.
 
We have now shown the existence of a holomorphic map $\tilde{\alpha}$ that verifies $\phi\circ \tilde{\alpha} \circ c = \tilde{\alpha}$ and $Im(\tilde{\alpha})=Im(u)$. Now let $S=C\cap L$.

We now remark that the cohomology class $[\tilde{\alpha}^{*}\omega]\in
H^2(\mathcal{L}(0))$ is determined by the integral
$\int_{\mathbb{C}P^{1}}\tilde{\alpha}^{*}\omega$, where here we understand
$\mathbb{C}P^{1}=\mathcal{L}(0)$. 
Therefore, for $\lambda^2:=\int_{\mathbb{C}P^{1}}\tilde{\alpha}^{*}\omega$, the
form $\rho(1,\lambda)$ is in the same cohomology class. By Proposition \ref{prop:Real-Moser-Stability}, there exists a diffeomorphism $\beta_0:\mathcal{L}(0)\to \mathcal{L}(0)$ such that $\beta_0^*\tilde{\alpha}^*\omega=\rho(1,\lambda)$. Let $\gamma_0:=\tilde{\alpha}\circ \beta_0$.

Now, by Lemma \ref{lem:Non-trivial bundle in Lagrangian} the normal bundle of $S$ in $TL$ is non-trivial. Consider the bundles $\gamma_0^*(\nu(C))$ and $\nu(\mathcal{L}(0))$, where $\nu(\cdot)$ denotes the normal bundle of the submanifold in question. Since the
Chern class of $C$ is $2$, the Maslov index of the two disks $D_1$ and
$D_2$ in $C$ with boundary on $L$ is $1$, and the restriction of
$\nu(C)$ to $L$ is non-trivial, then by Theorem C.3.7 in McDuff and
Salamon \cite{McDuff_Salamon_2004}, there is a (complex) isomorphism
$\Phi$ between the bundles $\gamma_0^*(\nu(D_1),TL\cap\nu(D_1))$ and
$\nu(\mathcal{L}(0)^{+},\mathcal{R}(0))$,
where $\mathcal{L}(0)^{\pm}$ denote the upper and lower hemispheres
of $\mathcal{L}(0)$, respectively.

Now note that
$\tilde{\phi}_*\Phi\tilde{c}_*$ gives an isomorphism of
$\tilde{\alpha}^*(\nu(D_2),TL\cap\nu(D_2))$ and
$\nu(\mathcal{L}(0)^{-},\mathcal{R}(0))$, and therefore the
map
\begin{equation*}
\Psi = \begin{cases} \Phi & (x,v)  \in \nu(\mathcal{L}(0)^{+}) \\
                     \tilde{\phi}_*\Phi \tilde{c}_* & (x,v) \in \nu(\mathcal{L}(0)^{-}) 
\end{cases}
\end{equation*}
is a complex equivariant isomorphism from $\nu(\mathcal{L}(0)) \to \alpha^*\nu(C)$.

Furthermore, since $\Psi$ is a complex bundle isomorphism, it is
symplectic as well. It therefore follows from Proposition
\ref{thm:Real symplectic neighborhood}, that for some $\delta > 0$, we
can find a $\mathbb{Z}_{2}$-equivariant map
$\beta_{\lambda}:\mathcal{L}(\delta)\to M$ such that
$\beta_{\lambda}^*\omega=\rho(1,\lambda)$ which restricts to
the symplectomorphism $\gamma_0:\mathcal{L}(0)\to C\subset M$. We may
now construct the blow-down by theorem \ref{thm:Blow-down} using the
equivariant symplectic map $\beta:\mathcal{L}(\delta)\to M$.
\end{proof}

\section{Applications to Real Packing}
\label{sec:Applications}
We now apply our results to problems of real packing in
symplectic four manifolds. 
 That is, given a real, symplectic $4$-manifold $(M,\omega,\phi)$, we wish to
know the quantity 
\begin{equation*}
p_{L,k}=\mbox{sup}_{\psi,r}\frac{\text{Vol
}\psi\left(\coprod_{i=1}^{k}B_{i}(r)\right)}{\text{Vol }M},
\end{equation*} 
where $\psi:\coprod_{i=1}^{k}B_{i}(r)\hookrightarrow M^{4}$
is a symplectic embedding such that the preimage
$\psi^{-1}(Fix(\phi))=\coprod_{i=1}^kB_{i,\mathbb{R}}(r)$. We will, in
particular, treat the cases $(\mathbb{C}P^{2}, \sigma,
\phi)$ and $(S^2 \times S^2,
\sigma_{S^2} \oplus \sigma_{S^2}, \phi')$ with the canonical
real structures $\phi$, $\phi'$, where $\mathbb{R}P^{n}=\mbox{Fix}(\phi)$ and
the direct sum of the equators
$S^1 \times S^1 = \mbox{Fix}(\phi')$. In particular, we will
prove

\begin{thm*}[Theorem \ref{thm:Relative packing rp2}]
  
\end{thm*}

\begin{thm}
\label{thm:Relative packing S1 x S1}
Let $\phi:S^2 \to S^2$ be the reflection on $S^2$ which sends the
upper hemisphere to the lower one and fixes the equator.
The relative packing numbers for $(S^2\times S^2, \sigma_{S^2} \oplus
\sigma_{S^2}, \phi \oplus \phi)$ are equal to the absolute packing numbers for
$(S^2\times S^2), \sigma_{S^2} \oplus
\sigma_{S^2}).$
\end{thm}

The packing numbers for $(\mathbb{C}P^2, \sigma)$ and $(S^2 \times S^2,
\sigma_{S^2} \oplus \sigma_{S^2})$ are given in Tables
\ref{table:Packing numbers RP2} and \ref{table:Packing numbers S2 x S2} below,
quoted from \cite{Biran_1997}.
\renewcommand{\thetable}{\thesection.\arabic{table}}
\begin{table}[h]
\renewcommand{\arraystretch}{1.75}
\centering
\begin{tabular}{| l | c c c c c c c c c |}
\hline
$k$ & 1 & 2 & 3 & 4 & 5 & 6 & 7 & 8 & $\geq$ 9 \\ \hline
$p_{\mathbb{R}P^2,k}$ & 1 & $\frac{1}{2}$ & $\frac{3}{4}$ & 1 & $\frac{4}{5}$ &
$\frac{24}{25}$ & $\frac{63}{64}$ & $\frac{288}{289}$ & 1 \\ \hline
\end{tabular}
\caption{$p_{\mathbb{R}P^2,k}(\mathbb{C}P^2,\sigma)=p_k(\mathbb{C}P^2, \sigma)$
\label{table:Packing numbers RP2}}
\end{table}
\begin{table}[h]
\renewcommand{\arraystretch}{1.75}
\centering
\begin{tabular}{| l | c c c c c c c c |}
\hline
$k$ & 1 & 2 & 3 & 4 & 5 & 6 & 7 & $\geq 8$ \\ \hline
$p_{S^1 \times S^1,k}$ & $\frac{1}{2}$ & $1$ & $\frac{2}{3}$ &
$\frac{8}{9}$ & $\frac{9}{10}$ &
$\frac{48}{49}$ & $\frac{224}{225}$ & $1$ \\ \hline
\end{tabular}
\caption{$p_{S^1 \times S^1,k}(S^2 \times S^2, \omega \oplus
\omega)=p_k(S^2 \times S^2, \omega \otimes \omega)$}
\label{table:Packing numbers S2 x S2}
\end{table}

Our basic strategy follows \cite{McDuff_Polterovich_1994} and
\cite{Biran_1997}. We create the blow up $\tilde{M}$ of $M$ using
symplectic and holomorphic embeddings of small balls, and we determine which
classes in $H^2(M;\mathbb{R})$ are respresented by symplectic forms, in this
case trying to increase the area of the exceptional divisors as much as
possible. 

The following proposition shows that, once we have altered the form on
$\tilde{M}$ to increase the area of the exceptional divisors, we are able to
show the existence of larger ball embeddings $M$. It is an
adaptation of Proposition 2.1.C in
McDuff and Polterovich \cite{McDuff_Polterovich_1994} to real
symplectic manifolds.

\begin{prop}
  \label{prop:Blow-down ha ha ha}
  Let $(M,\omega,\phi)$ be a real symplectic manifold, and let $J$ be
  an $\omega$-tame almost complex structure which is symmetrically
  integrable around a set of $k$ points $I=\{p_1,\dots,p_k\}\subset
  L$, where $L=Fix(\phi)$. Suppose that for some set of real numbers
  $\kappa_q> 0$, $q\in \{1,\dots,k\}$, there exists a real symplectic
  and holomorphic embedding
  \begin{equation*}
    \psi=\coprod_{q=1}^{k}\psi_{q}:\coprod(B(1+2\epsilon_{q}),B_{\mathbb{R}}(1+2\epsilon_{q}),\kappa_{q}^{2}\omega_{0},i,c)\to(M,L,\omega,J,\phi)
  \end{equation*} such that $\psi_{q}(0)=p_{q}$. Let $\Pi:\tilde{M}\to M$ denote the real symplectic blow-up of $(M,L)$ relative to $\psi$, and let $\tilde{J}$, $\tilde{\omega}$, and $\tilde{\phi}$ be the complex, symplectic, and real structures, respectively, on
  $\tilde{M}$ constructed from $J$, $\omega$, and $\phi$ by blowing-up $M$. Let $C_q,q\in \{1,...,k\}$ denote the exceptional curves $\Pi^{-1}(\psi_q(0))$ added in the blow-up, and let $e_{q}\in H^2(M;\mathbb{Z})$ denote the Poincar\'e duals of the homology classes $[C_{q}]\in H_2(M;\mathbb{Z})$.  

  Suppose, furthermore, that there exists a smooth family of
  symplectic forms $\tilde{\omega}_{t}$ on $\tilde{M}$ such that

  \begin{enumerate}

  \item $\tilde{\omega}_0=\tilde{\omega}$ is obtained by a real blow up
    relative to the embedding $\psi$.
  \item $\tilde{\omega}_{0}$ tames $\tilde{J}$,

  \item For all $q\in \{1,...,k\}$, $\tilde{\omega}_{t}|_{C_q}$, the
    restriction of $\tilde{\omega}_t$ to the exceptional divisors
    $\{C_q\}_{q=1}^{k}$ added in the blow-up, tames
    $\tilde{J}|_{C_q}$,

  \item $\phi^{*}\tilde{\omega}_{t}=-\tilde{\omega}_{t}$, so that
    $\tilde{L}=\Pi^{-1}(L)$ is Lagrangian for each of the forms
    $\tilde{\omega}_{t},$ and

  \item$[\tilde{\omega}_{t}]=[\Pi^{*}\omega]-\sum_{i=1}^{k}\lambda_{i}^{2}(t)e_{q}$
    for positive constants $\lambda_{q}(t),$ $0\leq t\leq1.$

  \end{enumerate}

  Then $(M,L,\omega,\phi)$ admits a real symplectic embedding of $k$
  disjoint standard symplectic balls of radii $\lambda_{q}(1)$,
  $q\in\{1,...,k\}$.\end{prop}
\begin{proof}
  Since $\tilde{M}$ is the real symplectic and holomorphic blow-up at $k$ real points
  of $(M,L,J,\phi,\omega)$, then, according to the construction in the
  proof of Proposition \ref{prop:Real blow-up}, there exists a real
  symplectic and holomorphic embedding
 
\begin{equation*}\tilde{\psi}=\coprod_{q=1}^{k}\tilde{\psi}_{q}:\coprod(\mathcal
{L}(1+2\epsilon_{q}),\mathcal{R}(1+2\epsilon_{q}),\rho(1,\kappa_{q}),i,\tilde{c}
)\to(\tilde{M},\tilde{L},\tilde{\omega}_0,J,\tilde{\phi})
  \end{equation*}
  
  We will show that for each $q$ there exists a family of equivariant
  diffeomorphisms $g_{t}:\tilde{M} \to \tilde{M},t\in[0,1]$ with the
  following properties:

  \begin{enumerate}

  \item $g_{0}=Id$

  \item There exists a $\delta \in \mathbb{R}$, $0 < \delta <
    1+2\epsilon$, such that, for all $t$,
    $\tilde{\psi}_{q}^*g_{t}^{*}\tilde{\omega}_{t}=\rho(1,\lambda_{q}(t))$
    on $\mathcal{L}(\delta)$

  \item $g_{t}\circ \tilde{\phi}=\tilde{\phi}\circ g_{t}$,
    $g_{t}(Im(\tilde{\psi})) = Im(\tilde{\psi})$, and
    $g_{t}(\tilde{\psi}_q(\mathcal{L}(0)))=\tilde{\psi}_q(\mathcal{L}(0))$.

  \end{enumerate}

  To see this, first note that the $\lambda_{i}(t)$ satisfy the
  equation
\begin{equation*}
\int_{\mathcal{L}(0)}\tilde{\psi}_q^*\tilde{\omega}_t=\lambda_{
i } (t)^2\int_ { \mathcal{L}(0)}\sigma=\lambda_{i}(t)^2,
\end{equation*}
  so $\tilde{\psi}_{q}^*\tilde{\omega}_t$ is in the same cohomology
  class on $\mathcal{L}(0)$ as $\rho(1,\lambda_{q}(t))$.  Then since
  both of these forms tame $\tilde{i}$ on $\mathcal{L}(0)$, the forms
  $s\rho(1,\lambda_{q}(t)) + (1-s)\tilde{\psi}_{q}^*\tilde{\omega}_t$
  are non-degenerate for all $s\in [0,1]$. Therefore, by Proposition
  \ref{prop:Real-Moser-Stability}, for each $t$, there exists an
  equivariant symplectomorphism
  $F_{q,t}:(\mathcal{L}(0),\rho(1,\lambda(t)))\to
  (\mathcal{L}(0),\tilde{\psi}^*\tilde{\omega}_t)$ such that
  $\tilde{c}\circ F_{q,t}=F_{q,t}\circ\tilde{c}$ and
  $F_{q,t}^{*}\tilde{\psi}^*\tilde{\omega}_t= \rho(1,\lambda_q(t))$ on
  $\mathcal{L}(0)$. Since $\tilde{\omega}_t$ and
  $\rho(1,\lambda_q(t))$ form smooth families of forms, the $F_{q,t}$
  must also be smooth with respect to $t$ as well.

  We extend the $F_{q,t}$ to an isomorphism of the normal bundle $\nu$
  of $\mathcal{L}(0)$ in $\mathcal{L}(1+2\epsilon)$ by defining
  $f_{q,t}:\nu \to \nu$ by $f_{q,t}(z,v)=(F_{q,t}(z), v)$. Since the
  restriction of both $\rho(1,\lambda(t))$ and $\rho(1,\kappa_q)=\tilde{\psi}_q^*\tilde{\omega}$ to the fiber $\nu_z$ is
  $\omega_0$, this isomorphism is both equivariant and
  symplectic. Then, by Theorem \ref{thm:Real symplectic neighborhood},
  $F_{q,t}$ extends to an equivariant symplectomorphism $G_{q,t}$ of a
  neighborhood $\mathcal{N}_{0,t}$ of $\mathcal{L}(0)$ in
  $(\mathcal{L}(1+2\epsilon),\rho(1,\lambda(t)))$ to a neighborhood
  $\mathcal{N}_{1,t}$ of $\mathcal{L}(0)$ in
  $(\mathcal{L}(1+2\epsilon),\tilde{\psi}^*\tilde{\omega}_t)$. Let
  $\delta_q \in \mathbb{R}$, $0<\delta_q<1+2\epsilon$ be such that
  $\mathcal{L}(\delta_q)\subset \mathcal{N}_{0,t}$ and for all $t\in
  [0,1]$. Note now that the $G_{q,t}|_{\mathcal{L}(\delta_q)}$ also
  form a smooth family of maps with respect to $t$. Extend $G_{q,t}$
  to a smooth family of equivariant differentiable maps from
  $\mathcal{L}(1+2\epsilon)\to \mathcal{L}(1+2\epsilon)$ which is the
  identity in a neighborhood of the boundary.

  Define $g_{q,t}=\tilde{\psi}_q \circ G_{q,t} \circ
  \tilde{\psi}^{-1}$, extend the $g_{q,t}$ to all of $\tilde{M}$ by
  the identity outside $\tilde{\psi} \left(\coprod_{q=1}^k
    \mathcal{L}(1+2\epsilon)\right)$, and denote the extension by
  $g_t$. Then
  $\tilde{\psi}^*g_{t}^*\tilde{\omega}_t=\rho(1,\lambda_q(t))$ on
  $\mathcal{L}(\delta_q)$, making $\tilde{\psi}$ a symplectomorphism
  with respect to the forms $g^*_{t}\tilde{\omega}$ for all $t$.

  Now let $\delta=min\{\delta_q\}_{q=1}^k$, and let $(M,\omega_{t})$
  be the blow-down of $(\tilde{M},g^*_t \tilde{\omega}_{t})$ using the
  symplectic and holomorphic embedding $\tilde{\psi}|_{\coprod_{q=1}^k
    \mathcal{L}_q(\delta)}$. Note that by Theorem \ref{thm:Blow-down},
  each form of the family $\omega_t$ is cohomologous to
  $\omega_{0}$. Also, $\omega_{0}$ tames $J$ and
  $[\omega_{0}]=[\omega]$, and therefore all the forms $\omega_t$ and
  $s\omega_0+(1-s)\omega$, $t,s\in [0,1]$, are symplectic and in the
  same cohomology class. Furthermore, note that $\frac{d}{dt}\omega_t$
  is supported on a finite union of balls, and is therefore
  exact. Therefore, by Proposition \ref{prop:Real-Moser-Stability} and
  Lemma \ref{lem:Moser's trick}, there exists a family of equivariant
  diffeomorphisms $H_r:M\to M$, $r\in [0,1]$, such that $H_0=Id$ and
  $H_1^{*}\omega=\omega_1$.  Since $(M,\omega_{1})$ admits a real
  symplectic embedding of
  $\coprod_{q=1}^{k}(B(1+2\epsilon),\lambda_q\omega_{st})$, where
  $\omega_{st}$ here is the standard symplectic form on
  $B(1+2\epsilon)$, this completes the proof.\end{proof}

The following corollary is an easy consequence.

\begin{cor}
  \label{cor:Packing}Let $(M,\omega,\phi)$ be a real symplectic
  manifold with almost complex structure $J$ which tames $\omega$ and
  is symmetrically integrable around the points
  $\{p_1,\dots,p_k\}$. Let $(\tilde{M},\tilde{\omega}_0,\tilde{\phi})$
  be a real manifold obtained by blowing up a real symplectic and
  holomorphic embedding $\psi$ of balls of radii $\kappa>0$, $\kappa$
  small, and let $\tilde{J}$ be the almost complex structure created
  in the blow-up.

  Now suppose that there exists a real symplectic form $\tilde{\omega}$ on
  $\tilde{M}$ such that $\tilde{\omega}$ tames the almost complex
  structure $\tilde{J}$ on $\tilde{M}$ and represents the cohomology class
  \[
  [\tilde{\omega}]=[\Pi^{*}\omega]-\sum_{i=1}^{k}\pi\lambda_{i}^{2}e_{i}.\]
  
  Then $(M,L,\omega)$ admits a real symplectic embedding of $k$
  disjoint standard symplectic balls of radii
  $\lambda_{1},...,\lambda_{k}$.
\end{cor}
\begin{proof}
  By Proposition \ref{prop:Real blow-up}, the blow-up
  $\tilde{\omega}_0$ relative to $\psi$ tames $\tilde{J}$, and
  therefore the forms $\omega_s:=s\tilde{\omega}_0+(1-s)\tilde{\omega}$
  tame $\tilde{J}$ as well, so the family of forms $\omega_s$
  satisfies the hypothesis of Proposition \ref{prop:Blow-down ha ha
    ha}. The conclusion follows.
\end{proof}

We now prove a lemma which allows us to symmetrize a symplectic form
given a real-structure and a tame, symmetric pseudo-holomorphic
structure $J$.

\begin{lem}
  \label{lem:Form symmetrization}Let $(M,\omega)$ be a symplectic
  manifold, and let $J$ be an almost complex structure tamed by
  $\omega$. Suppose there exists an anti-holomorphic involution $\phi$ (a map
  $\phi:M\to M$ such that $\phi^{2}=Id$ and
  $\phi_{*}J\phi_{*}=-J$). Then the $2$-form
  $\overline{\omega}=\frac{1}{2}(\omega-\phi^{*}\omega)$ has the
  properties \end{lem}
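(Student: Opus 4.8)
The plan is to verify each of the asserted properties by direct computation, relying only on the two defining relations $\phi^2 = \mathrm{Id}$ and $\phi_* J \phi_* = -J$. The first thing I would do is extract from these the pointwise identity $\phi_* J = -J\phi_*$ (compose $\phi_*J\phi_* = -J$ on the right with $\phi_*$ and use $\phi_*^2 = \mathrm{Id}$); this is the single algebraic fact that drives the whole argument.

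For closedness I would simply observe that pullback commutes with $d$ and that $\omega$ is closed, so $d\overline{\omega} = \tfrac{1}{2}(d\omega - \phi^* d\omega) = 0$. For the anti-invariance I would write $\phi^*\overline{\omega} = \tfrac{1}{2}\big(\phi^*\omega - (\phi^2)^*\omega\big) = \tfrac{1}{2}(\phi^*\omega - \omega) = -\overline{\omega}$, which says that $\phi$ is anti-symplectic for $\overline{\omega}$ and hence that $\mathrm{Fix}(\phi)$ is Lagrangian with respect to it.

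The one substantive point is that $\overline{\omega}$ still tames $J$. For $v \neq 0$ I would compute $\overline{\omega}(v, Jv) = \tfrac{1}{2}\big(\omega(v,Jv) - \omega(\phi_* v, \phi_* Jv)\big)$, then substitute $\phi_* Jv = -J\phi_* v$ so that the second term becomes $+\,\omega(\phi_* v, J\phi_* v)$, yielding $\overline{\omega}(v,Jv) = \tfrac{1}{2}\big(\omega(v,Jv) + \omega(\phi_* v, J\phi_* v)\big)$. Both summands are strictly positive because $\omega$ tames $J$ and $\phi_* v \neq 0$ (as $\phi$ is a diffeomorphism), so $\overline{\omega}(v,Jv) > 0$; in particular this forces $\overline{\omega}$ to be non-degenerate, so $\overline{\omega}$ is a symplectic form taming $J$. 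Finally, in the case that $\omega$ is $J$-compatible, the same substitution together with $\omega(J\cdot,J\cdot) = \omega(\cdot,\cdot)$ gives $\phi^*\omega(Jv,Jw) = \omega(\phi_* v, \phi_* w) = \phi^*\omega(v,w)$, hence $\overline{\omega}(Jv,Jw) = \overline{\omega}(v,w)$, and $\overline{\omega}$ inherits $J$-compatibility as well.

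I do not expect a genuine obstacle: the lemma is a formal symmetrization statement. The only delicate point worth emphasizing is the sign bookkeeping — it is precisely the anti-holomorphicity $\phi_* J = -J\phi_*$, rather than holomorphicity, that turns the \emph{difference} $\omega - \phi^*\omega$ into something that is still $J$-positive (for a holomorphic involution one would instead average, using $\omega + \phi^*\omega$).
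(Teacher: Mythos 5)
Your proposal is correct and follows essentially the same route as the paper: the key step in both is the substitution $\phi_*Jv=-J\phi_*v$, which converts $\omega(v,Jv)-\omega(\phi_*v,\phi_*Jv)$ into the sum of two positive terms, with closedness and anti-invariance following formally from $d\phi^*=\phi^*d$ and $\phi^2=\mathrm{Id}$. Your write-up is in fact slightly more complete than the paper's, which leaves the verification of $\phi^*\overline{\omega}=-\overline{\omega}$ implicit.
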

\begin{enumerate}
\item $\overline{\omega}$ is symplectic
\item $\phi^{*}\overline{\omega}=-\overline{\omega}$
\item $\overline{\omega}$ tames $J$
\end{enumerate}
\begin{proof}
  Since $\omega$ tames $J$, we have that
\begin{equation*}
\overline{\omega}=\frac{1}{2}(\omega(v,Jv)-\omega(\phi_{*}v,\phi_*Jv))=\frac{1}{2}(\omega(v,Jv)+\omega(\phi_* v,J\phi_* v)>0,
\end{equation*} and
  therefore $\overline{\omega}$ tames $J$. It follows that $\overline{\omega}$
  is non-degenerate. Furthermore,
  $d\overline{\omega}=\frac{1}{2}d(\omega-\phi^{*}\omega))=0$, so
  $\overline{\omega}$ is closed, and therefore symplectic. \end{proof}

\subsection{Stability of Real Packing}

We begin with the question of packing stability. Specifically, we show that for
a
real, 
rank-$1$ symplectic $4$-manifold of non-Seiberg-Witten simple type,
the real packing numbers $p_{\mathbb{R},k}$ stabilize for large $k$, extending a
theorem of Biran\cite{Biran_1997} to our setting. We begin by recalling several
definitions, and we then state our theorem.

\begin{defn}
  We say that a symplectic manifold $(M,\omega)$ is of
  \emph{Seiberg-Witten simple type} if the only non-zero Seiberg-Witten
  invariants are in dimension 0. Otherwise, we say that $(M,\omega)$ is of
  \emph{non-Seiberg-Witten simple type}. We denote by
  $\mathcal{C}$ the class of symplectic manifolds which are of
  non-Seiberg-Witten simple type, and we let
  $\mathcal{C}_{\mathbb{R}}\subset \mathcal{C}$ denote the real
  symplectic manifolds in class $\mathcal{C}$. (See Taubes
  \cite{Taubes_1995} for a definition and overview of the
  Seiberg-Witten invariants.)
\end{defn}

\begin{rem}
As noted in Biran\cite{Biran_1997}, $\mathcal{C}$ contains
\begin{enumerate}
\item Symplectic
manifolds with $b_{2}^{+}=1$ and $b_{1}=0$, and
\item Ruled symplectic manifolds and
their blow-ups.
\end{enumerate}
\end{rem}

\begin{defn}
We say that a differential form $\omega$ on $M$ is \emph{rank-$1$} if
$[\omega] = c[\omega']$, where $c\in \mathbb{R}$ and $[\omega'] \in
H^{*}(M,\mathbb{Q})$.
\end{defn}

\begin{defn}
Let $(M,\omega)$ be a closed symplectic $4$-manifold, and let $D_{\omega}$
denote the set
\begin{equation*}
D_{\omega}:=\{ B\in H_2(M;\mathbb{Z})|\omega(B)>0, c_1(B)\geq 2, B\cdot B \geq
0\}.
\end{equation*}
Define $d_{\omega}\in \mathbb{R}$ to be
\begin{equation*}
d_{\omega}:=\inf_{B\in D_{\omega}}\frac{\omega(B)}{c_1(B)} \in [0,\infty],
\end{equation*}
where we adopt the convention that $\inf \emptyset = \infty$.
\end{defn}

\begin{thm}
\label{thm:Symmetric Biran1}
Let $(M,\omega,\phi)$ be a real symplectic $4$-manifold in
the class
$\mathcal{C}$ where $\omega$ is rank-$1$ and $Fix(\phi)=L$. Define
$Vol(M)=\int_M \omega^2$, suppose that $0<d_{\omega}\leq \infty$, and let
  $\lambda_1,\dots,\lambda_n < \sqrt{d_{\omega}}$ be positive numbers
  which satisfy
\begin{equation*}
\sum_{q=1}^{n}\lambda^4_q < \text{Vol}(M,\omega).
\end{equation*} Then the manifold $(M,\pi\omega,L)$ admits a real symplectic
packing by $n$ balls of radii $\lambda_1,\dots,\lambda_n$. In
particular, if 
\begin{equation*}
n \geq \frac{Vol(M,\omega)}{d^2_{\omega}}
\end{equation*}
then there exists a full real packing of $(M,\pi\omega,L)$ by n equal
balls, and $p_{L,n}(M) = p_n(M)$, i.e. the relative and absolute packing
numbers for $n$ balls are equal.
\end{thm}

As a corollary, we have

\begin{cor}
The relative packing numbers $p_{\mathbb{R}P^2,k}$
for $(\mathbb{C}P^2,\mathbb{R}P^2,\sigma)$ are
equal to the absolute packing numbers $p_k$ for $(\mathbb{C}P^2,\sigma)$ for
all $k \geq 9$.
\end{cor}

\begin{proof}
  Note first that $\sigma$ is rank-$1$, and that
  $d_{\left(\frac{1}{\pi}\sigma\right)}=\frac{1}{3}$. Therefore, by Theorem
  \ref{thm:Symmetric Biran1}, there is a full real packing of
  $(\mathbb{C}P^2,\mathbb{R}P^2,\sigma)$ for $k\geq 9$.
\end{proof}

For the proof of Theorem \ref{thm:Symmetric Biran1}, we will appeal to the
following result of Biran.

\begin{thm}[Biran \cite{Biran_1997}, Theorem 4.1.A]
\label{thm:Biran form}
 Let $(M,\omega)$
  be a closed symplectic $4$-manifold in the class
  $\mathcal{C}$. Suppose that $0<d_{\omega}\leq \infty$ and let
  $\lambda_1,\dots,\lambda_n < \sqrt{d_{\omega}}$ be positive numbers
  which satisfy
\begin{equation*}
\sum_{q=1}^{N}\lambda^4_q < \text{Vol}(M,\omega).
\end{equation*}
Denote by $\Pi: (\tilde{M},\tilde{\omega})\to (M,\omega)$ a complex blow-up of $(M,\omega)$ at $n$
distinct points. Then the cohomology class
\begin{equation*}
[\Pi^*\omega]-\sum_{q=1}^{N} \lambda^2_q e_q\in H^{2}(\tilde{M};\mathbb{R})
\end{equation*}
admits a symplectic representative $\tilde{\omega}^{'}$.
\end{thm}

\begin{rem}
\label{rem:Inflation and J}
If, in addition to satisfying the hypothesis of Theorem \ref{thm:Biran
  form}, suppose also that $\tilde{\omega}$ is rank-$1$ and tames an almost complex
structure $\tilde{J}$ on $\tilde{M}$. Then, in the proof of Theorem
\ref{thm:Biran form} given by Biran\cite{Biran_1997}, we may obtain the
form $\tilde{\omega}^{'}$ by inflating along a single curve using the
version of symplectic inflation given in McDuff\cite{McDuff_2001},
Lemma 3.1. This allows us to make $\tilde{\omega}^{'}$ tame $\tilde{J}$ as
well.
\end{rem}

We now prove Theorem \ref{thm:Symmetric Biran1}.

\begin{proof}[Proof of Theorem \ref{thm:Symmetric Biran1}]
  Let $L:=Fix(\phi)$, and choose an almost complex structure $J$ which
  is symmetrically integrable around the points $\{p_1,\dots,p_k\} \in
  L$. By Theorem \ref{thm:Blow-up}, we construct the real blow up
  of $(M,\omega)$ with respect to small, real symplectic balls of
  radius $\epsilon>0$ centered at the points $\{p_i\}_{i=1}^k$, and we
  obtain the real symplectic manifold
  $(\tilde{M},\tilde{\omega}_{\epsilon},\tilde{\phi})$ and an almost
  complex structure $\tilde{J}$ which is tamed by
  $\tilde{\omega}_{\epsilon}$. By Remark \ref{rem:Inflation and J}, for every
  $\lambda >0$ such that $\lambda < \sqrt{d_{\omega}}$ and
  $\sum_{q=1}^{N}\lambda^4_q < \text{Vol}(M,\omega)$, there exists a
  symplectic form $\tilde{\omega}_{\lambda}$ which tames $\tilde{J}$
  and represents the cohomology class $[\Pi^*\omega]-\sum_{q=1}^{N}
  \lambda^2_q e_q$. Now note that by Lemma \ref{lem:Form symmetrization},
  the family of symplectic forms
  $\tilde{\omega}_{\mathbb{R},\lambda}=\frac{1}{2}(\tilde{\omega}_{\lambda} -
  \phi^{*}\tilde{\omega}_{\lambda})$ satisfies
  $\phi^{*}\tilde{\omega}_{\mathbb{R},\lambda} =
  -\tilde{\omega}_{\mathbb{R},\lambda}$, and by Lemma \ref{lem:Form symmetrization} these forms also tame
  $\tilde{J}$. Therefore, by Corollary \ref{cor:Packing},
  $(M,\omega,\phi)$ admits a real symplectic embedding of balls of
  radius $\lambda$, which proves the theorem.
\end{proof}

\subsection{Obstructions to real packing}
We will now show that the real packing numbers below the stable range for real
rank-$1$ symplectic manifolds in the class $\mathcal{C}$ are also identical to
the absolute packing numbers.

This follows from a refined version of Theorem \ref{thm:Symmetric
Biran1}, following Theorem 6.A in Biran \cite{Biran_1997}. We begin with some
definitions.

\begin{defn}
 Let $(M^{4},\omega,\phi)$ be a real symplectic four-manifold in the class
$\mathcal{C}$. Let $\Pi:\tilde{M}_k \to M$ be a real blow-up at $k$ points in
$L:=Fix(\phi)$, and let $\mathcal{E}_k\subseteq H_2(\tilde{M}_k;\mathbb{Z})$ be
the subset of homology classes representing exceptional spheres in
$\tilde{M}_k$. Let $\Pi_*: H_2(\tilde{M}_k;\mathbb{Z}) \to H_2(M;\mathbb{Z})$ be
the projection induced by $\Pi$, and let 
\begin{align*}
\mathcal{E}^{'} =& \Pi_* (\mathcal{E}_k)\backslash \lbrace 0 \rbrace
\subset H_2(M;\mathbb{Z}) \\
d^{'}_{k} =& \inf_{B \in \mathcal{E}^{'}_{k}} \frac{\omega(B)}{c_{1}(B)-1}.
\end{align*}
\end{defn}

\begin{thm}
\label{thm:Symmetric Biran 2}
Let $(M^4,\omega,\phi)$ be a real, rank-$1$ symplectic four-manifold
in the class $\mathcal{C}$. Then
\begin{align*}
\lambda_{\R,k}^{2} =& \min \left\lbrace \frac{1}{\pi}
d^{'}_{k},\frac{1}{\pi} \sqrt{\frac {2 \text{Vol}
(M,\omega)}{k}}\right\rbrace\\
p_{\R,k} =& \min \left\lbrace \frac{k d^{'2}_k}{2
\text{Vol}(M,\omega)},1 \right\rbrace 
\end{align*}
In particular, the real and absolute packing numbers are equal.
\end{thm}

The proof is an adaptaton to our setting of the proof of Theorem
6.A of Biran \cite{Biran_1997}.

We will need the following lemma. We state here the version quoted in Biran
\cite{Biran_1997}. Part \ref{lem:Class of exceptional holomorphic curves:item 1}
follows from Lemma 3.1 in McDuff\cite{McDuff_1990} (see also Proposition 2.3.A
in McDuff and Polterovich\cite{McDuff_Polterovich_1994}), and Part
\ref{lem:Class of exceptional holomorphic curves:item 2} follows from the same
lemma cited above and the positivity of intersections for $J$-holomorphic
curves 

\begin{lem}
\label{lem:Class of exceptional holomorphic curves}
 Let $(M^4,\omega)$ be a closed symplectic $4$-manifold. Denote by
$\mathcal{E}$ the set of all homology classes which can be represented by
$\omega$-symplectic exceptional spheres. Then
\begin{enumerate}
 \item \label{lem:Class of exceptional holomorphic curves:item 1} $\mathcal{E}$
depends only on the deformation class of $\omega$ \item \label{lem:Class of
exceptional holomorphic curves:item 2} If $E',E''$ are distinct classes in
$\mathcal{E}$, then $E'\cdot E''\geq 0$.
\end{enumerate}
\end{lem}

\begin{proof}[Proof of \ref{thm:Symmetric Biran 2}] We begin by remarking that
the upper bounds on the absolute packing numbers proven in Theorem 6.A of Biran
\cite{Biran_1997} are also upper bounds on the relative packing numbers.
Therefore
\begin{equation*}
p_{\R,k} \leq \min \left\lbrace \frac{k d^{'2}_k}{2
\text{Vol}(M,\omega)},1 \right\rbrace.
\end{equation*}
To show that the lower bounds are the same, let $L:= Fix(\phi)$, and use
\ref{thm:Blow-up} to construct the real blow-up of $M$ with respect to the
symplectic embedding of $k$ balls or radius $\epsilon$. Let
$\tilde{\omega}_{\epsilon}$ denote the resulting form on the blow-up. Choose
$\lambda > 0$ such that 
\begin{equation*}
\label{eq:lambda less than}
 \lambda^2 < \min \left\lbrace \frac{1}{\pi}
d^{'}_{k},\frac{1}{\pi} \sqrt{\frac {2 \text{Vol}
(M,\omega)}{k}}\right\rbrace
\end{equation*}
Consider the cohomology class
\begin{equation*}
 a = [\Pi^{*}\omega]-\pi\lambda^2 \sum_{j=1}^{k} e_{j}
\end{equation*}
and let $A$ be the Poincar\'e dual of $a$. Assume without loss of generality
that $a$ is a rank-$1$ cohomology class. It is clear that $A\cdot A > 0$, and,
by taking $\epsilon$ small enough, $\tilde{\omega}_{\epsilon} >0$ as well. Let
$\mathcal{E}_k$ be the set of homology classes in $H_2(\tilde{M};\mathbb{Z})$
that can be represented by $\tilde{\omega}_{\epsilon}$-symplectic exceptional
spheres. 

We claim that, for any $E\in \mathcal{N}$, $A\cdot E >0$. To see this, let
$E:=B - \sum_{j=1}^{k} m_{j} E_{j}$. Suppose first that $\omega(B) = 0$ and
$B\neq 0$. Then $E \neq E_j \forall j$, so by Lemma \ref{lem:Class of
exceptional holomorphic curves}, $E\cdot E_j \geq 0$, which implies that $m_j
\geq 0$ for all $j$. If every $m_j$ is $0$, then $E = B$ and
$\tilde{\omega}_{\epsilon}(E) = \omega(B) = 0$, which is a contradiction, so
there
exists at least one $j\in \{1,\dots,k\}$ for which $m_j \gneq 0$. This implies
that $\tilde{\omega}_{\epsilon}(E) = 0-\epsilon \sum_{j=1}^k m_j < 0$, a
contradiction. Therefore, if $\omega(B)=0$, then $B = 0$ as well. In this case,
it follows easily from $E\cdot E=-1$ that $E = E_j$ for some $1\leq j \leq k$,
and therefore $A\cdot E > 0$.

If $B\neq 0$, then $E\neq E_j \forall j$, and it follows from Lemma
\ref{lem:Class of exceptional holomorphic curves}, part
\ref{lem:Class of exceptional holomorphic curves:item 2} that $m_{j} \geq 0$ for
$1\leq j \leq k$. We
therefore have $1 = c_1(E) = c_1(B) - \sum_{j=1}^k m_j$,
so
\begin{align*}
 A \cdot E &= \omega(B) - \pi \lambda^2 \sum_{j=1}^k m_{j} \\
 &= \omega(B) - \pi \lambda^2(c_{1}(B) - 1) > 0,
\end{align*}
where the last inequality follows because $B\in \mathcal{E}^{'}$ by
definition and $\pi \lambda^{2} < d^{'}_{k}$ by
hypothesis. This proves the claim.

It now follows from Remark \ref{rem:Inflation and J} and Lemma \ref{lem:Form
symmetrization} that there exists a closed $2$-form $\rho$ representing the
class $a = PD(A)$, such that
$\tilde{\omega}_y = \frac{1}{y}\tilde{\omega}_{\epsilon} + \rho$ is
symplectic $\forall y>0$ and $\phi^*\tilde{\omega}_y = -\tilde{\omega}_y$. By
Corollary \ref{cor:Packing},
$(M,\omega)$ admits a symplectic packing by $k$ equal balls of radius
arbitrarily close to $\lambda$. Since this is true for every $\lambda$ that
satisfies Equation \ref{eq:lambda less than}, we have
\begin{equation*}
  \lambda_{\sup}^2 \geq \min \left\lbrace \frac{k d^{'2}_k}{2
\text{Vol}(M,\omega)},1 \right\rbrace,
\end{equation*}
and the proof is complete.
\end{proof}

Theorems \ref{thm:Relative packing rp2} and \ref{thm:Relative packing S1 x
S1} now follow immediately.

\begin{cor*}[Theorem \ref{thm:Relative packing rp2}]
  
\end{cor*}

\begin{cor*}[Theorem \ref{thm:Relative packing S1 x S1}]
Let $\phi:S^2 \to S^2$ be the reflection on $S^2$ which sends the
upper hemisphere to the lower one and fixes the equator.
The relative packing numbers for $(S^2\times S^2, \sigma_{S^2} \oplus
\sigma_{S^2}, \phi \oplus \phi)$ are equal to the absolute packing numbers for
$(S^2\times S^2), \sigma_{S^2} \oplus
\sigma_{S^2}).$
\end{cor*}
\subsection*{Acknowledgements}

I would like to thank Octav Cornea and Fran\c{c}ois Lalonde for their constant
encouragement and interesting discussions during this project, and the anonymous
reviewer for many helpful suggestions which greatly improved both the content
and presentation of this paper.

\bibliographystyle{amsplain}
\bibliography{/home/antonio/Mathematics/Bibtex/Equivariant,/home/antonio/Mathematics/Bibtex/Symplectic,/home/antonio/Mathematics/Bibtex/Algebraic_Geometry,/home/antonio/Mathematics/Bibtex/Topology}

\providecommand{\bysame}{\leavevmode\hbox to3em{\hrulefill}\thinspace}
\providecommand{\MR}{\relax\ifhmode\unskip\space\fi MR }
% \MRhref is called by the amsart/book/proc definition of \MR.
\providecommand{\MRhref}[2]{%
  \href{http://www.ams.org/mathscinet-getitem?mr=#1}{#2}
}
\providecommand{\href}[2]{#2}
\begin{thebibliography}{10}

\bibitem{Anjos_Lalonde_Pinsonnault_2009}
S{\'i}lvia Anjos, Fran\c{c}ois Lalonde, and Martin Pinsonnault, \emph{{The
  homotopy type of the space of symplectic balls in rational ruled
  4-manifolds}}, Geom. Topol. \textbf{13} (2009), no.~2, 1177--1227.
  \MR{MR2491660}

\bibitem{Barraud_Cornea_2007}
Jean-Fran\c{c}ois Barraud and Octav Cornea, \emph{{Lagrangian intersections and
  the {S}erre spectral sequence}}, Ann. of Math. (2) \textbf{166} (2007),
  no.~3, 657--722. \MR{MR2373371 (2008j:53149)}

\bibitem{Biran_1997}
Paul Biran, \emph{{Symplectic Packing in Dimension Four}}, Geom. Func. Anal.
  \textbf{7} (1997), no.~3, 420--437.

\bibitem{Biran_Cornea_2009}
Paul Biran and Octav Cornea, \emph{{Rigidity and Uniruling for Lagrangian
  Submanifolds}}, Geometry and Topology \textbf{13} (2009), 2881--2989.

\bibitem{Bredon_1972}
Glen Bredon, \emph{Introduction to compact transformation groups}, Academic
  Press, 1972.

\bibitem{Buhovsky_2010}
Lev Buhovsky, \emph{{A maximal relative symplectic packing construction}}, J.
  Symplectic Geom. \textbf{8} (2010), no.~1, 67--72. \MR{MR2609629}

\bibitem{Burckel_1979}
Robert~B. Burckel, \emph{An introduction to classical complex analysis. {V}ol.
  1}, Pure and Applied Mathematics, vol.~82, Academic Press Inc. [Harcourt
  Brace Jovanovich Publishers], New York, 1979. \MR{MR555733 (81d:30001)}

\bibitem{Cannas_da_Silva_2001}
Ana {Cannas da Silva}, \emph{{Lectures on symplectic geometry}}, {Lecture Notes
  in Mathematics}, vol. 1764, Springer-Verlag, Berlin, 2001. \MR{MR1853077
  (2002i:53105)}

\bibitem{Conway_1978}
John~B. Conway, \emph{Functions of one complex variable}, second ed., Graduate
  Texts in Mathematics, vol.~11, Springer-Verlag, New York, 1978. \MR{MR503901
  (80c:30003)}

\bibitem{Griffiths_Harris_1978}
Phillip Griffiths and Joseph Harris, \emph{Principles of algebraic geometry},
  Wiley Classics Library, John Wiley \& Sons Inc., New York, 1994, Reprint of
  the 1978 original. \MR{MR1288523 (95d:14001)}

\bibitem{Guillemin_Sternberg_1989}
V.~Guillemin and S.~Sternberg, \emph{{Birational equivalence in the symplectic
  category}}, Invent. Math. \textbf{97} (1989), no.~3, 485--522. \MR{MR1005004
  (90f:58060)}

\bibitem{Kawakubo_1991}
Katsuo Kawakubo, \emph{The theory of transformation groups}, Oxford University
  Press, 1991.

\bibitem{Lalonde_McDuff_1994}
Fran\c{c}ois Lalonde and Dusa McDuff, \emph{{{$J$}-curves and the
  classification of rational and ruled symplectic {$4$}-manifolds}}, {Contact
  and symplectic geometry ({C}ambridge, 1994)}, {Publ. Newton Inst.}, vol.~8,
  Cambridge Univ. Press, Cambridge, 1996, pp.~3--42. \MR{MR1432456 (98d:57045)}

\bibitem{Lalonde_McDuff_1996}
\bysame, \emph{{The classification of ruled symplectic {$4$}-manifolds}}, Math.
  Res. Lett. \textbf{3} (1996), no.~6, 769--778. \MR{MR1426534 (98b:57040)}

\bibitem{Lalonde_Pinsonnault_2004}
Francois Lalonde and Martin Pinsonnault, \emph{{The Topology of the Space of
  Symplectic Balls in Rational 4-Manifolds}}, Duke Math. J. \textbf{122}
  (2004), no.~2, 347--397.

\bibitem{Lerman_1995}
Eugene Lerman, \emph{{Symplectic cuts}}, Math. Res. Lett. \textbf{2} (1995),
  no.~3, 247--258. \MR{MR1338784 (96f:58062)}

\bibitem{McDuff_1990}
Dusa McDuff, \emph{{The Structure of Rational and Ruled Symplectic
  4-Manifolds}}, J. Am. Math. Soc. \textbf{3} (1990), no.~3, 679--712.

\bibitem{McDuff_1991_JDiffG}
\bysame, \emph{{The local behaviour of holomorphic curves in almost complex
  4-manifolds}}, Journal of Differential Geometry \textbf{34} (1991), 143--164.

\bibitem{McDuff_2001}
\bysame, \emph{{Symplectomorphism groups and almost complex structures}},
  {Essays on geometry and related topics, {V}ol. 1, 2}, {Monogr. Enseign.
  Math.}, vol.~38, Enseignement Math., Geneva, 2001, pp.~527--556. \MR{1929338
  (2003i:57042)}

\bibitem{McDuff_Polterovich_1994}
Dusa McDuff and Leonid Polterovich, \emph{{Symplectic Packings and Algebraic
  Geometry}}, Invent. Math. \textbf{115} (1994), no.~3, 405--434.

\bibitem{McDuff_Salamon_Intro_1998}
Dusa McDuff and Dietmar Salamon, \emph{{Introduction to symplectic topology}},
  second ed., {Oxford Mathematical Monographs}, The Clarendon Press Oxford
  University Press, New York, 1998. \MR{MR1698616 (2000g:53098)}

\bibitem{McDuff_Salamon_2004}
\bysame, \emph{{{$J$}-holomorphic curves and symplectic topology}}, {American
  Mathematical Society Colloquium Publications}, vol.~52, American Mathematical
  Society, Providence, RI, 2004. \MR{MR2045629 (2004m:53154)}

\bibitem{Munkres_2000}
James~R. Munkres, \emph{Topology}, Prentice-Hall, 2000.

\bibitem{Ortega_Ratiu_2004}
Juan-Pablo Ortega and Tudor~S. Ratiu, \emph{{Momentum Maps and {H}amiltonian
  Reduction}}, Birkhauser, 2004.

\bibitem{Pinsonnault_2008}
Martin Pinsonnault, \emph{{Symplectomorphism groups and embeddings of balls
  into rational ruled 4-manifolds}}, Compos. Math. \textbf{144} (2008), no.~3,
  787--810. \MR{MR2422351}

\bibitem{Pommerenke_1992}
Ch. Pommerenke, \emph{Boundary behaviour of conformal maps}, Grundlehren der
  Mathematischen Wissenschaften [Fundamental Principles of Mathematical
  Sciences], vol. 299, Springer-Verlag, Berlin, 1992. \MR{MR1217706
  (95b:30008)}

\bibitem{Schlenk_2005}
Felix Schlenk, \emph{{Packing Symplectic Manifolds by Hand}}, J. Symplectic
  Topology \textbf{3} (2005), no.~3, 313--340.

\bibitem{Taubes_1995}
Clifford~Henry Taubes, \emph{{The {S}eiberg-{W}itten and {G}romov invariants}},
  Math. Res. Lett. \textbf{2} (1995), no.~2, 221--238. \MR{1324704 (96a:57076)}

\bibitem{Wieck_thesis}
Ingo Wieck, \emph{{Explicit symplectic packings: symplectic tunnelling and new
  maximal constructions}}, Ph.D. thesis, Universit{\"a}t zu K{\"o}ln, 2008.

\end{thebibliography}

\end{document}